\DeclareMathAlphabet{\mathpzc}{OT1}{pzc}{m}{it}
\DeclareMathOperator{\dist}{dist}
\newtheorem{thm}{Theorem}[section]
\newtheorem{lemma}[thm]{Lemma}
\newtheorem{prop}[thm]{Proposition}
\theoremstyle{remark}
\newtheorem{rem}[thm]{Remark}
\theoremstyle{definition}
\newtheorem{defn}[thm]{Definition}
\newtheoremstyle{Claim}{}{}{\itshape}{}{\itshape\bfseries}{:}{ }{#1}
\theoremstyle{Claim}
\newcommand{\Z}{{\mathbb{Z}}}
\newcommand{\T}{{\mathbb{T}}}
\renewcommand{\H}{\mathcal{H}}
\newcommand{\R}{\mathbb{R}}
\newcommand{\N}{\mathbb{N}}
\newcommand{\eps}{\varepsilon}
\newcommand{\norm}[1]{\left\lVert#1\right\rVert}
\DeclareMathOperator{\dive}{div}
\title{On the existence and uniqueness of solutions\\ to time-dependent fractional MFG}
\author{Marco Cirant and Alessandro Goffi}
\date{\today}
\begin{document}

\maketitle

\begin{abstract}
We establish existence and uniqueness of solutions to evolutive fractional Mean Field Game systems with regularizing coupling, for any order of the fractional Laplacian $s\in(0,1)$. The existence is addressed via the vanishing viscosity method. In particular, we prove that in the subcritical regime $s>1/2$ the solution of the system is classical, while if $s\leq 1/2$ we find a distributional energy solution. To this aim, we develop an appropriate functional setting based on parabolic Bessel potential spaces. We show uniqueness of solutions both under monotonicity conditions and for short time horizons.
\end{abstract}

\noindent
{\footnotesize \textbf{AMS-Subject Classification}}. {\footnotesize 35R11, 35K55, 49N70, 35Q84, 91A13.}\\
{\footnotesize \textbf{Keywords}}. {\footnotesize Mean-Field Games, Fractional Fokker-Planck equation, Fractional Hamilton-Jacobi equation, Bessel potential spaces on the torus.}

\tableofcontents

\section{Introduction}
This paper deals with the following backward-forward coupled system of integro-differential Hamilton-Jacobi-Bellman (HJB) and Fokker-Planck equations
\begin{equation}\label{fmfg}
\begin{cases}
-\partial_tu+(-\Delta)^s u+H(x,Du)=F[m(t)](x) & \text{ in }Q_T\\
\partial_tm+(-\Delta)^s m-\dive(mD_pH(x,Du))=0 & \text{ in }Q_T\\
m(x,0)=m_0(x),\, u(x,T)=u_T(x) & \text{ in }\T^d\ ,
\end{cases}
\end{equation}
where $Q_T:=\T^d\times[0,T]$, $\T^d$ stands for the flat torus $\R^d / \mathbb{Z}^d$, $H=H(x,p)$ is a superlinear Hamiltonian in the second variable, $(-\Delta)^s u$ is the fractional Laplacian of order $s$, $F$ is a regularizing coupling and $m_0,u_T$ are given functions.

Systems of the form \eqref{fmfg} arise in Mean Field Games (briefly MFG) theory, whose goal is to describe the collective behavior of a continuum of rational agents, each of whom seeks to minimize a common criterion. This theory was developed independently by Lasry-Lions \cite{ll} and by Huang et al. \cite{hcm} with the aim of describing Nash equilibria in differential games with infinitely many players. Recently, MFG theory has stimulated an increasing interest due to the wide range of applications in engineering, finance and social sciences among others.

From a PDE viewpoint, the analysis of such models has been carried out either when the dynamics of the average player is driven by standard diffusions (see for example \cite{GomesBook, ll}), possibly degenerate \cite{CGPT}, or first order (deterministic) systems (see e.g. \cite{CG,CPT}). Our purpose is to study an intermediate situation, where the dynamics of agents is perturbed by a $2s$-stable L\'evy process instead of the standard diffusion. L\'evy processes meet a variety of challenging topics ranging from financial modeling (see e.g. the monograph \cite{ContBook}) to physics and biology among others. We refer to \cite{Bertoin,Taqqu} for a comprehensive treatment of stable-like processes, to the monograph \cite{Applebaum} for a more general analysis on jump-type processes and the nice survey \cite{ApplebaumNote}.

The stationary counterpart of \eqref{fmfg}, which heuristically describes an equilibrium state in the long-time regime, has been analyzed very recently by the first author and collaborators \cite{CCDNV}. In particular, in \cite{CCDNV} the investigation is performed for the subcritical order of the fractional Laplacian $s\in(\frac12,1)$, both in the case of local and nonlocal coupling between the equations. There, the well-posedness of the fractional Fokker-Planck equation is based on variational methods, while the study of the fractional HJB equation is established via viscosity solutions' techniques.

Here, we address the existence and uniqueness of solutions to \eqref{fmfg} through the vanishing viscosity method, namely solutions of \eqref{fmfg} are obtained as limits (in some sense to be specified below) of solutions $u_{\sigma}$ of the approximating viscous coupled system of PDEs
\begin{equation}\label{fmfgv}
\begin{cases}
-\partial_tu-\sigma\Delta u+(-\Delta)^s u+H(x,Du)=F[m(t)](x) & \text{ in }Q_T\\
\partial_tm-\sigma\Delta m+(-\Delta)^s m-\dive(mD_pH(x,Du))=0 & \text{ in }Q_T\\
m(x,0)=m_0(x),\,\, u(x,T)=u_T(x) & \text{ in }\T^d\ .
\end{cases}
\end{equation}
Such way to tackle the existence issue for first order systems has been sketched in \cite[Section 4.4]{CardaliaNotes}, and it is a quite natural approach in our setting: \eqref{fmfgv} behaves well in terms of regularity, and is also meaningful from the stochastic viewpoint.

In this paper we provide existence and uniqueness results for any order of the fractional Laplacian $s\in(0,1)$. As it often happens in the PDE literature of MFG, we consider the periodic case, namely all the data are defined on $\T^d$. This is the typical compact setting where one avoids boundary phenomena. While this work was under preparation, we realized that many technical ingredients regarding fractional calculus in the periodic case were not available in the literature, and known at best to few experts. Part of this work is then devoted to provide a self-contained survey on several tools and techniques, ranging from harmonic analysis to interpolation theory, hoping that these may be useful for future research in this area. This material is basically contained in the appendices and at the beginning of Section \ref{sintro}. 

Bessel potential spaces on the torus $H_p^{\mu}(\T^d)$ constitute a natural functional framework for the periodic fractional Laplacian, and can be directly defined through multiple Fourier series. Since we deal with parabolic problems, we also need suitable space-time spaces, on which it is possible to establish (linear) parabolic regularity. Here, one expects space regularity of a solution and of its time derivative to differ by a factor of $2s$. Hence, we systematically treat spaces of the form
\[
\H_p^{\mu}(Q_T) = \H_p^{\mu; s}(\T^d\times (0,T))=\{u\in L^p(0,T;H_p^{\mu}(\T^d))\ ,\partial_t u\in L^{p}(0,T;H_{p}^{\mu-2s}(\T^d))\},
\]
that are clearly reminiscent of classical parabolic Sobolev spaces $W^{2,1}_p$. We prove some fractional parabolic regularity theorems, and chain/product rules that are crucial to work in the nonlinear setting. Then, inspired by some results that appeared in the context of stochastic partial differential equations, we prove an embedding theorem for $\H_p^{\mu}(Q_T)$ that, apart from its own interest, plays a key role in the analysis of \eqref{fmfg}. We refer to \cite{CL} for some discussions on $\H_p^{\mu,s}((0,T) \times \R^d)$, and \cite{KrylovBookSPDE} and references therein for the case $s = 1$.

Let us now enter into a more detailed description of the main results of the paper. First, let us state all the assumptions that will be in force throughout the article. We suppose that $H(x,p)$ is $C^3(\T^d\times\R^d)$, convex in the second variable, $H(x,p)\geq H(x,0)=0$ and there exist constants $\gamma > 1$ and $c_H,C_H,\tilde{C}_H>0$ such that
\begin{align}
\tag{H1}\label{H1} & D_pH(x,p)\cdot p-H(x,p)\geq C_H|p|^{\gamma}-c_H\ , \\
\tag{H2}\label{H2} & H(x,p)-H(x,q)\leq C_H(|p|^{\gamma-1}+|q|^{\gamma-1})|p-q| \, \\
\tag{H3}\label{H3} & |D_{xx}^2H(x,p)|\leq C_H|p|^{\gamma}+\tilde{C}_H \ , \\
\tag{H4}\label{H4} & |D_{px}^2H(x,p)|\leq C_H|p|^{\gamma-1}+\tilde{C}_H \ , \\
\tag{H5}\label{H5} & D_{pp}^2H(x,p)\xi\cdot \xi\geq C_H|p|^{\gamma-2}|\xi|^2-\tilde{C}_H 
\end{align}
for every $x\in \T^d$, $p\in\R^d$ and $\xi\in\R^d$. Denote by $\mathcal{P}(\T^d)$ the set of Borel probability measures on $\T^d$ endowed with the Monge-Kantorovich distance\footnote{$\mathbf{d}_1(\mu,\nu):=\sup_{\varphi}\int_{\T^d}\varphi d(\mu-\nu)$, where the supremum is taken over the 1-Lipschitz maps $\varphi:\T^d\to\R$.} $\mathbf{d}_1$. The following are the standing assumptions on the regularizing coupling $F$: there exists a constant $C_F>0$ such that
\begin{align}
\tag{F1}\label{F1} & \text{$F:\mathcal{P}(\T^d) \rightarrow C^{2+\alpha}(\T^d)$ is continuous,} \\
\tag{F2}\label{F2} & \text{$\|F[m_1]-F[m_2]\|_{C^{2+\alpha}(\T^d)} \leq C_F\mathbf{d}_1(m_1,m_2)$ for all $m_1,m_2\in \mathcal{P}(\T^d)$}, \\
\tag{F3}\label{F3} & \text{$\| F(\cdot,m) \|_{C^{2+\alpha}(\T^d)}\leq C_F$ for every $m\in  \mathcal{P}(\T^d)$.}
\end{align}
Finally, we suppose that
\begin{equation}
\tag{I}\label{I}\text{$u_T\in C^{4+\alpha}(\T^d)$, \ $m_0\in C^{4+\alpha}(\T^d)$ is non-negative and $\int_{\T^d}m_0(x)dx=1$}.
  \end{equation}

 As announced, our first step is to construct solutions of the viscous coupled system \eqref{fmfgv}. More precisely, we have the following
\begin{thm}\label{sigmapositivo}
Let \eqref{I}, \eqref{H1}-\eqref{H5} and \eqref{F1}-\eqref{F3} be in force. Then, for all $\sigma > 0$ and $s\in(0,1)$, there exists a classical solution $(u_{\sigma},m_{\sigma})$ to the fractional MFG system \eqref{fmfgv}.
\end{thm}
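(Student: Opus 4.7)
The strategy is to apply Schauder's fixed-point theorem to the standard MFG decoupling map. Let $\mathcal{K} := C([0,T]; \mathcal{P}(\T^d))$ be equipped with the metric $d(\mu, \nu) := \sup_{t \in [0,T]} \mathbf{d}_1(\mu(t), \nu(t))$; it is a convex subset of the ambient locally convex space obtained via the duality $\mathcal{P}(\T^d) \hookrightarrow C(\T^d)^\ast$. Given $\mu \in \mathcal{K}$, we first solve the backward HJB
\[
-\partial_t u - \sigma \Delta u + (-\Delta)^s u + H(x, Du) = F[\mu(t)](x), \qquad u(\cdot, T) = u_T,
\]
to obtain a classical $u = u[\mu]$, then feed the drift $D_p H(\cdot, Du)$ into the forward linear Fokker--Planck equation
\[
\partial_t m - \sigma \Delta m + (-\Delta)^s m - \dive\bigl(m\, D_p H(x, Du)\bigr) = 0, \qquad m(\cdot, 0) = m_0,
\]
and set $\Psi(\mu) := m$. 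A fixed point of $\Psi$ is a classical solution of \eqref{fmfgv}.

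For the HJB step, the key point is that $\sigma > 0$ makes $-\partial_t - \sigma \Delta + (-\Delta)^s$ a non-degenerate second-order parabolic operator, with $(-\Delta)^s$ acting as a perturbation of order $2s < 2$. Using \eqref{F3} together with the structure of $H$, one obtains, uniformly in $\mu$: (i) $\|u\|_\infty \leq \|u_T\|_\infty + T\|F\|_\infty$ by comparison (using $H(x,0) = 0$ and that $(-\Delta)^s$ kills constants); (ii) a Lipschitz bound $\|Du\|_\infty \leq C$ via a Bernstein-type argument applied to $|Du|^2$, in which at an interior maximum both $-\sigma \Delta$ and $(-\Delta)^s$ contribute with a favourable sign and the nonlinear terms are absorbed thanks to \eqref{H1}--\eqref{H5}; (iii) parabolic Schauder estimates for the fractionally-perturbed heat operator (treating $H(x, Du)$ and $(-\Delta)^s u$ as known Hölder data) then upgrade this to $\|u\|_{C^{2+\alpha, 1+\alpha/2}(Q_T)} \leq C$.

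With these bounds, the drift of the Fokker--Planck equation is bounded in $C^{1+\alpha, (1+\alpha)/2}$ by \eqref{H4}, and classical linear parabolic theory (again viewing the nonlocal term as a lower-order perturbation of $-\sigma\Delta$) produces a unique $m \in C^{2+\alpha, 1+\alpha/2}(Q_T)$, with $m \geq 0$ by the maximum principle and $\int_{\T^d} m(\cdot, t)\, dx = 1$ for all $t$ by integrating the equation. Continuity of $\Psi : \mathcal{K} \to \mathcal{K}$ follows from stability of classical solutions to both equations under smooth perturbations of the data combined with the continuity of $F$ in \eqref{F1}. The uniform $C^{2+\alpha, 1+\alpha/2}$ estimate on $m$ forces equicontinuity in time in the $\mathbf{d}_1$-metric, so $\Psi(\mathcal{K})$ is relatively compact in $\mathcal{K}$ by Arzelà--Ascoli, and Schauder's theorem delivers the fixed point.

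The main technical obstacle is the Lipschitz estimate (ii): the Bernstein argument must cope simultaneously with the local dissipation $-\sigma \Delta$, the nonlocal piece $(-\Delta)^s$, and the superlinear $|p|^\gamma$-growth of $H$ from \eqref{H1}--\eqref{H2}. Fortunately, the nonlocal term is cooperative — at a point where $|Du|^2$ is maximal one has $(-\Delta)^s |Du|^2 \geq 0$ — so the classical chain of inequalities survives, provided one tracks carefully the contributions of $D_x H$ (via \eqref{H3}) and of the source $F$ (via \eqref{F3}). Once (ii) is in place, everything else reduces to standard linear parabolic regularity for a fractionally-perturbed heat equation plus the familiar MFG Schauder machinery.
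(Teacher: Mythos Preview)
Your overall strategy---Schauder fixed point applied to the map $\mu \mapsto u \mapsto m$---is exactly the paper's, and the treatment of the Fokker--Planck step and of continuity/compactness is essentially correct. The gap is in the Lipschitz estimate (ii) for the HJB equation.

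The Bernstein argument you sketch does not close for superlinear $H$. Differentiating the equation in $x_j$, multiplying by $u_{x_j}$ and summing, the equation for $w=|Du|^2/2$ reads, at an interior maximum,
\[
0 \;\ge\; \sigma|D^2u|^2 + \text{(nonlocal good term)} - Du\cdot D_xH + Du\cdot D_xF .
\]
The nonlocal contribution is indeed favourable (C\'ordoba--C\'ordoba gives $\sum_j u_{x_j}(-\Delta)^s u_{x_j}\ge (-\Delta)^s w\ge 0$ at the max), but from $H(x,0)=0$ and \eqref{H4} one only gets $|D_xH(x,p)|\lesssim |p|^{\gamma}+|p|$, so the bad term $Du\cdot D_xH$ is of order $|Du|^{\gamma+1}$. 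Neither $\sigma|D^2u|^2$ nor the nonlocal term absorbs a pure $|Du|^{\gamma+1}$ contribution, and \eqref{H5} never enters: it controls $D^2_{pp}H$, which only appears after \emph{two} differentiations. So ``the classical chain of inequalities'' does not survive as stated. The paper avoids this by proving semiconcavity via the adjoint method (Proposition~\ref{semiconcav}): one differentiates twice, tests against the solution $\rho$ of the linearized Fokker--Planck equation, and uses \eqref{H5} together with the key integral bound $\int |Du|^\gamma \rho \le C$ coming from \eqref{H1} and the representation formula~\eqref{reprformula}. Lipschitz bounds then follow from semiconcavity plus the $L^\infty$ bound on $u$.

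A secondary point: working in $\mathcal{K}=C([0,T];\mathcal{P}(\T^d))$ gives $F[\mu]\in C([0,T];C^{2+\alpha}(\T^d))$, which is only continuous in time; parabolic Schauder at the $C^{2+\alpha,1+\alpha/2}$ level needs H\"older time regularity of the source. The paper takes $\mathcal{C}\subset C^{1+\alpha/2}([0,T];\mathcal{P}(\T^d))$ precisely so that \eqref{F2} yields $F[\mu]\in C^{2+\alpha,1+\alpha/2}(Q_T)$, enabling the bootstrap in Proposition~\ref{existenceHJ} up to $C^{4+\alpha,2+\alpha/2}$. With your weaker space you would have to pass through $W^{2,1}_p$ theory first and then argue the extra time regularity a posteriori.
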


The proof of this result is a rather standard application of Schauder's fixed point theorem. For fixed $\sigma > 0$, we  treat $(-\Delta)^s u$, $(-\Delta)^s m$ as perturbation terms in a viscous MFG system. Semiconcavity estimates for the HJB equation with mixed local and nonlocal diffusion term are obtained by means of the adjoint method, that ensure existence of $u$. Note that these estimates are stable as $\sigma \to 0$. This limiting procedure is then described by the next main result:

\begin{thm}\label{vanishingviscosity}
Under the same assumptions of Theorem \ref{sigmapositivo}, let $(u_{\sigma},m_{\sigma})$ be a solution to \eqref{fmfgv}. Then, as $\sigma\rightarrow0$ and up to subsequences, $u_{\sigma}$ converges uniformly to $u$, $D u_\sigma$ converges strongly to $D u$, and $m_{\sigma}$ converges weakly to $m$.
If $s\in(0, 1/2]$, then $(u, m)$ is a weak solution to \eqref{fmfg}, and $(u, m) \in \H_p^{2s}(Q_T) \times \H_p^{2s-1}(Q_T)$ for all $p \in (1, \infty)$.
If $s\in(1/2,1)$, then $\partial_tu,\partial_tm,(-\Delta)^s u,(-\Delta)^s m$ belong to some $\mathcal{C}^{\bar \alpha,\frac {\bar \alpha} {2s}}(Q_T)$, $\bar \alpha \in (0,1)$, and $(u,m )$ is classical solution to \eqref{fmfg} .
\end{thm}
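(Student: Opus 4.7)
The plan is to apply the vanishing viscosity program: derive $\sigma$-uniform a priori bounds on $(u_\sigma, m_\sigma)$, extract a convergent subsequence, identify the limit as a weak solution of \eqref{fmfg}, and finally bootstrap to classical regularity when $s > 1/2$. The first task is to recast the estimates from the proof of Theorem \ref{sigmapositivo} so that they do not depend on $\sigma$. Since $H\geq 0$, $F[m]$ is uniformly bounded in $C^{2+\alpha}$ by \eqref{F3}, and $u_T\in C^{4+\alpha}$, comparison gives a uniform $L^\infty$ bound on $u_\sigma$; the semiconcavity estimate produced by the adjoint method for the mixed operator $\sigma(-\Delta)+(-\Delta)^s$ is, as the authors remark, stable as $\sigma\to 0$ and combined with the $L^\infty$ bound yields a uniform Lipschitz bound. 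Using \eqref{H2} and \eqref{H4} this bounds $H(x,Du_\sigma)$ and the drift $D_pH(x,Du_\sigma)$ in $L^\infty(Q_T)$; then the fractional parabolic regularity in $\H_p^\mu(Q_T)$ developed in the paper gives $\|u_\sigma\|_{\H_p^{2s}(Q_T)}\leq C$ for every $p\in(1,\infty)$. A matching energy argument on the Fokker--Planck equation, together with the product rule for Bessel spaces announced in Section \ref{sintro}, yields $\|m_\sigma\|_{\H_p^{2s-1}(Q_T)}\leq C$, again uniformly in $\sigma$.

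With these estimates in hand I would pass to the limit. The uniform Lipschitz bound and the $\H_p^{2s}$ estimate, combined with the embedding theorem for $\H_p^\mu(Q_T)$ announced in Section \ref{sintro}, produce equicontinuity of $u_\sigma$ (so $u_\sigma\to u$ uniformly by Arzel\`a--Ascoli along a subsequence) and compactness of $\{Du_\sigma\}$ in some $L^q(Q_T)$ (so $Du_\sigma\to Du$ strongly). Weak compactness in $L^p(0,T;H_p^{2s-1})$ together with tightness of the curves $t\mapsto m_\sigma(t)$ gives $m_\sigma\rightharpoonup m$, and \eqref{F2} promotes this to uniform convergence $F[m_\sigma]\to F[m]$. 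In \eqref{fmfgv} the viscous corrections $\sigma\Delta u_\sigma$ and $\sigma\Delta m_\sigma$ vanish as distributions, $H(x,Du_\sigma)\to H(x,Du)$ in $L^q$ by \eqref{H2} and strong gradient convergence, and $m_\sigma D_pH(x,Du_\sigma)\to m D_pH(x,Du)$ by a standard weak--strong pairing. This establishes the announced convergence and shows that $(u,m)\in\H_p^{2s}(Q_T)\times\H_p^{2s-1}(Q_T)$ is a weak solution of \eqref{fmfg}, settling the case $s\in(0,1/2]$.

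For $s\in(1/2,1)$ I would then bootstrap. Since $2s>1$, for $p$ large the parabolic embedding $\H_p^{2s}(Q_T)\hookrightarrow \mathcal{C}^{\bar\alpha,\bar\alpha/(2s)}(Q_T)$ places $Du$ in $\mathcal{C}^{\bar\alpha,\bar\alpha/(2s)}$; hence by \eqref{H3}--\eqref{H4} and \eqref{F2} both $H(x,Du)$ and $F[m]$ belong to the same H\"older class. Fractional parabolic Schauder-type estimates then promote $\partial_t u$ and $(-\Delta)^s u$ to $\mathcal{C}^{\bar\alpha,\bar\alpha/(2s)}(Q_T)$, and the parallel argument applied to the Fokker--Planck equation (now with a H\"older drift and zero source) yields the corresponding regularity for $m$, so $(u,m)$ is classical.

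The main obstacle throughout is securing the $\sigma$-uniform semiconcavity/Lipschitz estimate for $u_\sigma$ working directly with the purely fractional principal part $(-\Delta)^s$: borrowing regularity from $\sigma\Delta$ would produce constants blowing up as $\sigma\to 0$, especially when $s$ is close to $0$, so the adjoint equation has to be set up and tested in the Bessel-potential scale adapted to the fractional operator. Once this estimate is in place, the strong $L^q$ convergence of $Du_\sigma$ --- which is what allows one to pass to the limit in the nonlinear Hamiltonian --- follows from the paper's own embedding theorem for $\H_p^\mu(Q_T)$, and the remainder of the argument is a standard compactness-and-identification exercise.
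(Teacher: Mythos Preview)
Your overall vanishing-viscosity strategy matches the paper's, but there is a genuine gap in how you obtain strong convergence of $Du_\sigma$, and it is precisely the step that makes the limit work in the regime $s\le 1/2$. You propose to get a uniform bound $\|u_\sigma\|_{\H_p^{2s}(Q_T)}\le C$ from the paper's fractional parabolic regularity and then extract compactness of $\{Du_\sigma\}$ in $L^q$ from the embedding theorem. Neither step goes through. First, Theorem \ref{fracregtor} applies only to the pure equation $\partial_t u+(-\Delta)^s u=f$; for the mixed operator $-\sigma\Delta+(-\Delta)^s$ the paper only establishes the weaker $\H_p^s$ (hence $\mathcal{C}^{\alpha,\beta}$) bound of Proposition \ref{hc}, and the $\H_p^{2s}$ regularity is recovered \emph{a posteriori} on the limit $u$, after $\sigma=0$, in Step 3 of the proof. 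Second, and more seriously, when $s\le 1/2$ the index $2s\le 1$ gives no control beyond one spatial derivative in $L^p$; no embedding of $\H_p^{2s}(Q_T)$ can produce strong compactness of gradients. Your route therefore cannot deliver $Du_\sigma\to Du$ in $L^q$ in the supercritical/critical range, and without that you cannot pass to the limit in $H(x,Du_\sigma)$ or in $m_\sigma D_pH(x,Du_\sigma)$.

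The paper's device here is different and is the point you are missing: the uniform semiconcavity estimate is used not only to get a Lipschitz bound, but also to invoke the stability theorem for gradients of semiconcave functions \cite[Theorem 3.3.3]{CS}, which says that if semiconcave functions with a common semiconcavity constant converge locally uniformly, then their gradients converge a.e. Combined with $\|Du_\sigma\|_\infty\le C$ and dominated convergence, this yields $Du_\sigma\to Du$ in every $L^p$, for \emph{all} $s\in(0,1)$. The same semiconcavity also gives the one-sided bound $[\dive(D_pH(x,Du_\sigma))]^-\le C$ (item (iv) in the paper's proof), which is the hypothesis required in Proposition \ref{stabFP} to obtain the $\sigma$-independent $L^\infty$ and energy estimates on $m_\sigma$; your product-rule route to a uniform $\H_p^{2s-1}$ bound on $m_\sigma$ runs into the same mixed-operator issue as above. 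Once these semiconcavity-based ingredients are in place, the remainder of your outline (weak passage to the limit, a posteriori $\H_p^{2s}\times\H_p^{2s-1}$ regularity via Theorem \ref{fracregtor}, and the bootstrap of Theorem \ref{addreg} for $s>1/2$) is essentially what the paper does.
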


For a more complete statement of convergences of $u_\sigma, D u_\sigma, m_\sigma$, see (i)-(vi) at the beginning of the Proof of Theorem \ref{vanishingviscosity} in Section \ref{SecVV}. Moreover, we refer to the weak notion of solution as the energy one, as detailed in Definitions \ref{defFP} and \ref{weak}. We mention that very little is known about fractional Fokker-Planck equations, so part of Section \ref{SecFP} is devoted to establish some basic facts and properties of solutions. The weak treatment of Fokker-Planck equations with local non-degenerate diffusion (see e.g. \cite{P} and references therein) cannot be directly converted to the nonlocal framework, heuristically because of the gap between the energy terms $(-\Delta)^{s/2}$ and the divergence term. Thus, first order techniques as the ones described in \cite{WeiTian} for the euclidean case are better suited to work in the nonlocal setting.

Regarding uniqueness of solutions, we recall that it is known to hold under two different regimes for MFG driven by local diffusions. The first one requires monotonicity of $F$ and convexity of $H$, and appeared in the seminal papers by Lasry-Lions, while the second one is when the time horizon $T$ is small. The latter was formally presented in the recorded lectures of Lions, and it has been re-analyzed recently in the literature (see \cite{BC,BF,CT}). The monotone case carries over in our fractional framework, as we have enough regularity of $u,m$ and uniqueness for the equations by simple energy arguments. As for the short-time regime, the proofs proposed in \cite{BC,BF} cannot be adapted to our setting, being designed for the Laplacian and established through $L^2$-type estimates. Here, we follow an approach presented in \cite{CGM, CT} to deal with the existence problem in the local case. The idea will be to exploit decay properties of the semigroup associated to the fractional Laplacian in suitable Bessel potentials spaces. These will be strong enough only in the case $s > 1/2$. We stress that here it is crucial to have fractional product (also known as Kato-Ponce inequalities) and chain rules. As mentioned before, these are known in the euclidean setting and particular cases only, such as for $x$-independent compositions. We propose here a self-contained presentation of these results in our framework.

Our uniqueness theorem can be states as follows. For its proof, see Theorems \ref{un1}, \ref{smallT}.
\begin{thm}\label{unique} Suppose that \eqref{I}, \eqref{H1}-\eqref{H5} and \eqref{F1}-\eqref{F3} hold. Then \eqref{fmfg} admits a unique solution in the following cases:
\begin{itemize}
\item[(a)] The monotone case. If $H$ is convex and the following monotonicity condition holds
\begin{equation*}
\label{condunique}
\int_{\T^d}(F[m_1](x)-F[m_2](x))d(m_1-m_2)(x)>0\ ,\forall m_1,m_2\in\mathcal{P}(\T^d)\ ,m_1\neq m_2\ ,
\end{equation*}
then \eqref{fmfg} admits a unique solution.
\item[(b)] Small-time uniqueness. For $s\in(\frac12,1)$, there exists $T^*>0$, depending on $d,s,H,F,m_0, u_T$ such that for all $T\in(0,T^*]$, \eqref{fmfg} has at most a solution. 
\end{itemize}
\end{thm}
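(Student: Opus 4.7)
For part (a), the plan is to implement the classical Lasry-Lions crossing argument. Let $(u_1,m_1)$ and $(u_2,m_2)$ be two solutions, set $\tilde u:=u_1-u_2$ and $\tilde m:=m_1-m_2$, test the difference of the HJB equations against $\tilde m$ and the difference of the Fokker-Planck equations against $\tilde u$, and subtract. The time derivatives combine into $\int_{Q_T}\partial_t(\tilde u\tilde m)=[\tilde u\tilde m]_0^T=0$ thanks to $\tilde m(0)=0$ and $\tilde u(T)=0$; the two fractional diffusion terms cancel by self-adjointness of $(-\Delta)^s$ on $\T^d$; and integration by parts of the divergence term leaves the identity
\begin{equation*}
\int_{Q_T}(F[m_1]-F[m_2])(m_1-m_2) = \int_{Q_T}\bigl\{m_1\bigl[H(\cdot,Du_1)-H(\cdot,Du_2)-D_pH(\cdot,Du_1)\cdot(Du_1-Du_2)\bigr] + m_2\bigl[H(\cdot,Du_2)-H(\cdot,Du_1)-D_pH(\cdot,Du_2)\cdot(Du_2-Du_1)\bigr]\bigr\}.
\end{equation*}
Convexity of $H(x,\cdot)$ makes each bracket a nonpositive Bregman remainder, so (since $m_i\geq 0$) the right-hand side is $\leq 0$; strict monotonicity of $F$ makes the left-hand side $\geq 0$, with equality only if $m_1\equiv m_2$. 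Hence $m_1=m_2$, and then $u_1=u_2$ by uniqueness for the resulting HJB equation with identical source and terminal datum.

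For part (b), where $s\in(1/2,1)$ and Theorem \ref{vanishingviscosity} produces classical solutions, the plan is a fixed-point contraction in mild (Duhamel) form. Denoting by $\{S(t)\}_{t\geq 0}$ the semigroup generated by $-(-\Delta)^s$ on $\T^d$, any solution $(u,m)$ satisfies
\begin{align*}
u(t) &= S(T-t)u_T + \int_t^T S(\tau-t)\bigl(F[m(\tau)]-H(\cdot,Du(\tau))\bigr)\,d\tau, \\
m(t) &= S(t)m_0 - \int_0^t S(t-\tau)\,\dive\bigl(m(\tau)\,D_pH(\cdot,Du(\tau))\bigr)\,d\tau.
\end{align*}
Subtracting these representations for two solutions and estimating in a suitable product space of the form $C([0,T];H_p^\mu(\T^d))\times C([0,T];H_p^{\mu-1}(\T^d))$, with $\mu,p$ chosen so that $Du$ embeds into $L^\infty(\T^d)$, I would combine: the semigroup smoothing estimate $\norm{S(t)f}_{H_p^\mu}\lesssim t^{-(\mu-\mu')/(2s)}\norm{f}_{H_p^{\mu'}}$; a fractional Leibniz (Kato-Ponce) inequality for the product $m\,D_pH(\cdot,Du)$; a fractional chain-rule estimate for $H(\cdot,Du)$; and the Lipschitz property \eqref{F2} of $F$. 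Each time integral contributes a positive power of $T$, so that the Lipschitz constant of the map $(u,m)\mapsto(\tilde u,\tilde m)$ is bounded by $C(R)T^\theta$ on balls of fixed radius $R$ (which is controlled a priori by the regularity already obtained in Theorems \ref{sigmapositivo}--\ref{vanishingviscosity}); taking $T^*$ small enough forces $u_1=u_2$, $m_1=m_2$.

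The main obstacle in (b) is the fractional Sobolev control of the nonlinear terms: bounding $H(\cdot,Du_1)-H(\cdot,Du_2)$ in $H_p^{\mu-1}$ needs a chain rule for an $x$-dependent, $p$-nonlinear composition for which no off-the-shelf version on $\T^d$ is available, and the hypothesis $s>1/2$ enters precisely because we need the smoothing exponent in the Duhamel term containing $\dive$ to be integrable in time while the space still embeds the gradient into $L^\infty$. The self-contained Kato-Ponce and chain rules announced in the introduction are indispensable here; once they are in place, the bookkeeping of Bessel exponents to close the contraction is standard. In (a) the delicate point is instead to justify the dualities in the low-regularity regime $s\leq 1/2$, where $m\in\H_p^{2s-1}(Q_T)$ is only distributional in space; I would bypass this by first proving the crossing identity on the viscous approximations \eqref{fmfgv}, where everything is classical, and passing to the limit $\sigma\to 0$ via the convergences of Theorem \ref{vanishingviscosity}.
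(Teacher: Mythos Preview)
Your plan for part (b) matches the paper's proof (Theorem \ref{smallT}) essentially verbatim: Duhamel representation, semigroup decay $\norm{\mathcal{T}_tf}_{\nu+\gamma,p}\lesssim t^{-\gamma/2s}\norm{f}_{\nu,p}$, Kato-Ponce for the product $m\,D_pH$, chain rule for $H(\cdot,Du)$, and Lipschitz continuity of $F$, all set in $C([0,T];H_p^{2s})\times C([0,T];H_p^{2s-1})$. The bookkeeping you describe is exactly what the paper does.

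For part (a) the Lasry-Lions crossing identity and the conclusion from convexity and monotonicity are also the same as in the paper (Theorem \ref{un1}). However, your proposed bypass for the low-regularity case $s\le 1/2$ has a genuine gap. You suggest proving the crossing identity on the viscous approximations \eqref{fmfgv} and passing to the limit via Theorem \ref{vanishingviscosity}. But Theorem \ref{vanishingviscosity} produces \emph{one particular} solution as a vanishing-viscosity limit; to prove uniqueness you must compare two \emph{arbitrary} weak solutions $(u_1,m_1)$ and $(u_2,m_2)$, neither of which is known a priori to arise this way. There is no mechanism to attach a sequence of viscous MFG solutions to a given weak solution of \eqref{fmfg}, so the limit argument does not get off the ground.

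The paper avoids this by working directly with the weak formulations. The point you may be missing is that the regularity built into Definitions \ref{defFP} and \ref{weak} is better than $\H_p^{2s-1}$: one has $m\in \mathbb{H}_2^{s}(Q_T)\cap L^\infty(Q_T)$ with $\partial_t m\in \mathbb{H}_2^{-1}(Q_T)$, and $u\in\H_2^s(Q_T)$ with $Du\in L^\infty(Q_T)$. This is precisely enough to use $\mu=m_1-m_2$ as a test function in the HJB weak formulation (it lies in $\mathbb{H}_2^s\cap L^\infty$) and $v=u_1-u_2$ as a test function in the Fokker-Planck weak formulation (it lies in $\H_2^s\cap L^\infty(0,T;W^{1,\infty})$, cf.\ Remarks \ref{uniqueFP} and \ref{u0l2}). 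The dualities and the integration-by-parts in time are then justified directly, and the crossing identity follows without any approximation.
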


Finally, we mention that while this work was under preparation we discovered that E. R. Jakobsen and O. Ersland were currently studying systems similar to \eqref{fmfg}. 
The main difference with respect to this work are the assumptions on $s$ and $H$. In \cite{JE}, $s$ has to be greater than $1/2$, and $H$ is not necessarily convex but requires at most linear growth with respect to $Du$ in some cases. Since without convexity of $H$ one cannot rely on semiconcavity arguments, a different method to obtain crucial Lipschitz estimates is used. In \cite{JE} some models with local couplings are also analyzed. We stress that here we develop some function space techniques to study various regimes of regularity in the whole interval $s \in (0, 1)$.

\par\smallskip
\textit{Plan of the paper}. Section \ref{sintro} is devoted to some preliminary tools on the functional spaces used in the following sections. We prove the Sobolev embedding theorem for parabolic spaces in Subsection \ref{SecEmb}. Section \ref{SecFPHJB} is completely designated to the separate analysis of the viscous fractional Fokker-Planck and HJB equations. In particular, the existence result for the latter is given in Subsection \ref{SecExHJB}. In Section \ref{SecMFG} we prove both Theorem \ref{sigmapositivo} and Theorem \ref{vanishingviscosity}, postponing the uniqueness to Section \ref{SecUn}, where Theorem \ref{unique} is proven. As announced, in the appendices we gather regularity results in Sobolev and H\"older spaces for non-homogeneous fractional heat-type equations together with fractional Leibniz and composition rules on the torus.

\bigskip

{\bf Acknowledgements.} The authors are members of the Gruppo Nazionale per l'Analisi Matematica, la Probabilit\`a e le loro Applicazioni (GNAMPA) of the Istituto Nazionale di Alta Matematica (INdAM). This work has been partially supported by 
the Fondazione CaRiPaRo
Project ``Nonlinear Partial Differential Equations:
Asymptotic Problems and Mean-Field Games". The second-named author wishes to thank the Department of Mathematics of the University of Padova for the hospitality during the preparation of the paper.

\section{Fractional parabolic spaces}\label{sintro}
\subsection{H\"older spaces}
We first recall the definition of H\"older spaces on the torus and then define the classical parabolic H\"older spaces associated to the heat and fractional heat equation. Let $\alpha\in(0,1]$ and $k$ be a non-negative integer. A real-valued function $u$ defined on $\T^d$ belongs to $C^{k+\alpha}(\T^d)$ if $u\in C^k(\T^d)$ and 
\begin{equation*}
[D^{r}u]_{C^\alpha(\T^d)}:=\sup_{x \neq y\in\T^d}\frac{|D^ru(x)-D^ru(y)|}{\dist(x,y)^{\alpha}}<\infty
\end{equation*}
for each multi-index $r$ such that $|r|=k$, where $\dist(x,y)$ is the geodesic distance from $x$ to $y$ on $\T^d$. Note that in the definition of the previous (and following) seminorm, since $u$ can be seen as a periodic function on $\R^d$, $\dist(x,y)$ can be replaced by the euclidean distance $|x-y|$, and the supremum be taken in $\R^d$. We will denote by $\| \cdot \|_{\infty; \Omega}$ the sup-norm on 
$\Omega$ (and eventually drop $\Omega$ in the subscript if it is clear from the context).

Let now $I \subseteq [0,T]$ and $Q = \T^d \times I$. First define
\begin{equation*}
[u]_{C^{\alpha}_x(Q)}:=\sup_{t\in[0,T]}[u(\cdot,t)]_{C^{\alpha}(\T^d)}
\end{equation*}
and
\begin{equation*}
[u]_{C^{\beta}_t(Q)}:=\sup_{x\in\T^d}[u(x,\cdot)]_{C^{\beta}(I)}
\end{equation*}

For any integer $k$ we denote by $C^{2k,k}(Q)$ the set of functions $u = u(x,t):Q\rightarrow\R$ which are continuous in $Q$ together with all derivatives of the form $\partial^r_tD^{\beta}_xu$ for $2r+|\beta|\leq 2k$. Moreover, let $C^{2k+\alpha,k+\alpha/2}(Q)$ be functions of $C^{2k,k}(Q)$ such that the derivatives $\partial^r_tD^{\beta}_xu$, with $2r+|\beta| = 2k$, are $\alpha$-H\"older in $x$ and $\alpha/2$-H\"older in $t$, with norm
\[
\|u\|_{C^{2k+\alpha, k+\alpha/2}(Q)} = \sum_{2r+|\beta|\leq 2k} \|\partial^r_tD^{\beta}_xu\|_{\infty;Q} + \sum_{2r+|\beta| = 2k} [\partial^r_tD^{\beta}_xu]_{C^{\alpha}_x(Q)} + [\partial^r_tD^{\beta}_xu]_{C^{\alpha/2}_t(Q)}.
\]

For these classical parabolic H\"older spaces, we refer the interested reader to \cite{GMparabolic, KrylovbookHolder, LSU} for a more comprehensive discussion.

We now consider some more general H\"older spaces. 
Let $X$ be a Banach space and $\beta\in(0,1)$. Denote by $C^{\beta}(I; X)$ the space of functions $u: I \rightarrow X$ such that the norm defined as
\begin{equation*}
\norm{u}_{C^{\beta}(I; X)}:=\sup_{t\in I}\norm{u(t)}_X+\sup_{t \neq \tau}\frac{\norm{u(t)-u(\tau)}_X}{|t-\tau|^\beta}
\end{equation*}
is finite. Hence,  specializing to $X=C^{\alpha}(\T^d)$, $\alpha\in(0,1)$, we have that
 $C^{\beta}(I; C^{\alpha}(\T^d))$ is the set of functions $u: I \rightarrow C^{\alpha}(\T^d)$ with finite norm
\begin{equation*}
\norm{u}_{C^{\beta}(I;C^{\alpha}(\T^d))}:=\norm{u}_{\infty; Q}+\sup_{t\in I }[u(\cdot,t)]_{C^{\alpha}(\T^d)}
+[u]_{C^{\beta}(I;C^{\alpha}(\T^d))}\ ,
\end{equation*}
where the last seminorm is defined as
\begin{equation*}
[u]_{C^{\beta}(I;C^{\alpha}(\T^d))}:=\sup_{t \neq \tau\in I}\frac{\norm{u(\cdot,t)-u(\cdot,\tau)}_{C^{\alpha}(\T^d)}}{|t-\tau|^{\beta}}\ .
\end{equation*}
When dealing with regularity of parabolic equations driven by fractional diffusion, we also need the following H\"older spaces with different regularity in time and space. Following the lines of \cite{CF} and \cite{FRRO}, we define $\mathcal{C}^{\alpha,\beta}(Q)$ as the space of continuous functions $u$ such that the following H\"older parabolic seminorm is finite
\begin{equation}\label{fracholder}
[u]_{\mathcal{C}^{\alpha,\beta}(Q)}:= [u]_{C^{\alpha}_x(Q)}+[u]_{C^{\beta}_t(Q)}.
\end{equation}
The norm in the space $\mathcal{C}^{\alpha,\beta}(Q)$ is defined naturally as
\begin{equation*}
\norm{u}_{\mathcal{C}^{\alpha,\beta}(Q)}:=\|u\|_{\infty; Q}+[u]_{\mathcal{C}^{\alpha,\beta}(Q)}\ .
\end{equation*}
Note that if $\beta = \alpha/2$, the space $\mathcal{C}^{\alpha,\beta}(Q)$ coincides with $C^{\alpha, \alpha/2}(Q)$. As pointed out in \cite{FRRO}, the following equivalence between seminorms holds
\begin{equation*}
[u]_{\mathcal{C}^{\alpha,\beta}(Q)}\sim \sup_{x,y\in\T^d,t,\tau\in[0,T]}\frac{|u(x,t)-u(y,\tau)|}{\dist(x,y)^{\alpha}+|t-\tau|^{\beta}} \ .
\end{equation*}

\smallskip
All the spaces above can be defined analogously on $\R^d$ and $Q = \R^d \times I$. Moreover, if $u$ is a periodic function in the $x$-variable, norms on $\T^d$ and $\R^d$ coincide, e.g. $\|u\|_{C^\alpha(\T^d)} = \|u\|_{C^\alpha(\R^d)}$, ...

\begin{rem}\label{holderinc}
It is worth noticing that we have to distinguish the spaces $C^{\beta}([0,T];C^{\alpha}(\T^d))$ and $\mathcal{C}^{\alpha,\beta}(Q)$, since it results
\begin{equation*}
C^{\beta}([0,T];C^{\alpha}(\T^d))\subsetneq \mathcal{C}^{\alpha,\beta}(Q_T)\ .
\end{equation*}
It can be easily seen by taking $\beta=\alpha$ and a periodic function in the $x$-variable that behaves like $(x+t)^{\alpha}$ in a neighborhood of $(0,0)$ (see in particular \cite[Section 4]{Rabier}).
\end{rem}
\subsection{Fractional Sobolev and Bessel potential spaces}\label{susb}
Recall that $L^p(\T^d)$ is the space of all measurable and periodic functions belonging to $L_{\rm loc}^p(\R^d)$ with norm $\|\cdot\|_{p}=\|\cdot\|_{L^p((0,1)^d)}$. If $k$ is a non-negative integer, $W^{k, p}(\T^d)$ consists of $L^p(\T^d)$ functions with (distributional) derivatives in $L^p(\T^d)$ up to order $k$. For $\mu\in\R$ and $p\in(1,\infty)$, we can directly define the Bessel potential space $H_p^{\mu}(\T^d)$ as the space of all distributions $u$ such that $(I-\Delta)^{\frac{\mu}{2}}u\in L^p(\T^d)$, where $(I-\Delta)^{\frac{\mu}{2}}u$ is the operator defined in terms of Fourier series
\begin{equation*}
(I-\Delta)^{\frac{\mu}{2}} u(x)=\sum_{k\in\Z^d}(1+4\pi^2|k|^2)^{\frac{\mu}{2}} \hat u(k) e^{2\pi ik\cdot x}\ ,
\end{equation*}
where
\begin{equation*}
\hat u(k)=\int_{\T^d}u(x)e^{-2\pi ik\cdot x}dx\ .
\end{equation*}
The norm in $H_p^{\mu}(\T^d)$ will be denoted by 
\begin{equation*}
\norm{u}_{\mu,p}:=\norm{(I-\Delta)^{\frac{\mu}{2}}u}_{p}.
\end{equation*}
Note that $H_p^{k}(\T^d)$ coincides with $W^{k, p}(\T^d)$ when $k$ is a non-negative integer and $p\in(1,\infty)$, by standard arguments in Fourier series (see Remark \ref{isometry} below). Moreover, $C^{\infty}(\T^d)$ is dense in $H_p^{\mu}(\T^d)$, by a convolution procedure: this fact will be useful to prove several properties of Bessel spaces, as it is sufficient to argue in the smooth setting to get general results. 

Bessel potential spaces can be also constructed via complex interpolation. We will briefly present such a construction, that will be helpful to derive some useful properties of  $H_p^{\mu}(\T^d)$. For additional details, we refer to \cite[Chapter 2]{LunardiSNS}, \cite[Chapter 4]{BL} and \cite[Section 1.9]{trbookinterpolation}. In general, in complex interpolation theory one considers two  Banach spaces $X,Y$, that are continuously embedded in a Hausdorff topological vector space $Z$. Let $S$ be the set
\[
S:=\{z\in\mathbb{C}:0<\text{Re} z<1\}\ .
\]
We define
\begin{multline*}
\mathcal{H}_{X,Y}(S):=\{u(\theta) \, | \, u(\theta):\overline{S}\to X+Y\text{ bounded and continuous},\\
\text{holomorphic on $S$}, \|u(it)\|_{X},\|u(1+it)\|_{Y}\text{ bounded for $t\in\R$}\}
\end{multline*}
and we equip it with the norm
\[
\|u\|_{\mathcal{H}_{X,Y}(S)} = \max\{\sup_{t\in\R}\|u(it)\|_{X}, \sup_{t\in\R}\|u(1+it)\|_{Y}\}.
\]
For every $\theta\in[0,1]$ we define the complex interpolation space with respect to $(X,Y)$ as
\[
[X,Y]_{\theta}=\{u(\theta):u\in \mathcal{H}_{X,Y}(S)\}
\]
endowed with the norm
\[
\|f\|_{[X,Y]_{\theta}}:=\inf_{u\in\mathcal{H}_{X,Y}(S), u(\theta)=f}\|u\|_{\mathcal{H}_{X,Y}(S)}\ .
\]
Then, one has that $H_p^{\mu}(\T^d)$ can be obtained by complex interpolation between $L^p(\T^d)$ and $W^{k,p}(\T^d)$, see, e.g., \cite[Section 3]{ScT} or \cite[Theorem 6.4.5 and p. 170]{BL}, that is
\[
H^\mu_p(\T^d) \simeq [L^p(\T^d),W^{k,p}(\T^d)]_{\theta}, \qquad \text{where $\mu = k\theta$}.
\]

We briefly describe also some tools to construct real interpolation spaces, namely the so-called K-method and the trace method, referring, among others, to \cite[Chapter 1]{LunardiBook} or \cite[Chapter 1]{LunardiSNS} for additional details.
In general, real interpolation between $L^p(\T^d)$ and $W^{k,p}(\T^d)$ leads to spaces that do not coincide with Bessel potential spaces. Still, we will make use of this other class of fractional spaces to prove useful properties of $(-\Delta)^s$.
Let $X,Y$ be Banach spaces with $Y\subset X$, $\theta\in[0,1]$ and $p\in[1,\infty]$. For every $x\in X$ and $t>0$, define
\[
K(t,x,X,Y)=\inf_{x=a+b,a\in X,b\in Y}\|a\|_X+t\|b\|_Y\ .
\]
If $I\subset(0,\infty)$, we denote by $L^p_{*}(I)$ the Lebesgue space $L^p(I,\frac{dt}{t})$ and $L^{\infty}_{*}(I)=L^{\infty}(I)$. We define the real interpolation space $(X,Y)_{\theta,p}$ between the Banach spaces $X,Y$ as
\[
(X,Y)_{\theta,p}=\{x\in X+Y:t\mapsto t^{-\theta}K(t,x,X,Y)\in L^p_{*}(0,+\infty)\}
\]
endowed with the norm
\[
\|x\|_{\theta,p}=\|t^{-\theta}K(t,x,X,Y)\|_{L^p_{*}(0,+\infty)}\ .
\]
It can be proved that this is a Banach space. We remark that such a construction turns out to be useful to prove H\"older regularity of the solution of the fractional heat equation in Theorem \ref{Regularity}. Another frequent characterization of real interpolation spaces is given by means of the trace method (see \cite[Section 1.8.1]{trbookinterpolation}, \cite[Section 1.2.2]{LunardiBook} and \cite{LM}). Let $X,Y$ be Banach spaces as above. For $\alpha,p\in\R$ with $p\in(1,+\infty)$ satisfying $0<\alpha+\frac1p<1$, we define the space 
\[
W(p,\alpha,Y,X)=\{f:\R^+\to X: t^{\alpha}f(t)\in L^p(0,+\infty;Y)\text{ and }t^{\alpha}f'(t)\in L^p(0,+\infty;X)\}\ .
\]
It is a Banach space endowed with the norm
\[
\|f\|_{W(p,\alpha,Y,X)}:=\max\{\|t^{\alpha}f(t)\|_{L^p(0,+\infty;Y)}, \, \|t^{\alpha}f'(t)\|_{L^p(0,+\infty;X)}\}\ .
\]
We then identify with $T(p,\alpha,Y,X)$ the space of traces $u$ of those functions $f(t)\in W(p,\alpha,Y,X)$, equipped with the norm
\[
\|u\|_{T(p,\alpha,Y,X)}=\inf_{u=f(0)}\|f\|_{W(p,\alpha,Y,X)}
\]
By \cite[Proposition 1.2.10]{LunardiBook}, this provides a characterization for the real interpolation space $(X,Y)_{\theta,p}$ as a trace space. For $p\in(1,\infty)$, $\theta \in (0,1)$ and $\theta=\frac1p+\alpha$, we define fractional Sobolev spaces $W^{1-\theta,p}(\T^d)$ by 
\[
W^{1-\theta,p}(\T^d)=T(p,\alpha,W^{1,p}(\T^d),L^p(\T^d)).
\]
For $\mu > 1$, $W^{\mu,p}(\T^d)$ is defined as the space of functions in $W^{\lfloor{\mu}\rfloor,p}(\T^d)$ with derivatives of order $\lfloor{\mu}\rfloor$ in $W^{\mu-\lfloor{\mu}\rfloor ,p}(\T^d)$, while for $\mu < 0$ it is defined by duality.
Note that $T(p,\alpha,Y,X)=T(p',-\alpha,X',Y')$ by \cite[Theorem 1.2]{LM}.

We finally mention that spaces $W^{\mu,p}(\T^d)$ defined above can be characterized using the Gagliardo seminorm on $\T^d$ by transposing classical arguments on $\R^d$ (see, e.g., \cite{LunardiSNS}).


{\bf Parabolic spaces.} We proceed with the definitions of some functional spaces involving time and space weak derivatives. Let $Q=\T^d\times I$ be as before. For any integer $k$ and $p\geq1$, we denote by $W^{2k,k}_p(Q)$ the space of functions $u$ such that $\partial_t^{r}D^{\beta}_xu\in L^p(Q)$ for any multi-index $\beta$ and $r$ such that $|\beta|+2r\leq  2k$ endowed with the norm
\begin{equation*}
\norm{u}_{W^{2k,k}_p(Q)}=\left(\iint_{Q}\sum_{|\beta|+2r\leq2k}|\partial_t^{r}D^{\beta}_x u|^pdxdt\right)^{\frac1p}.
\end{equation*}
We now define the fractional generalization of the above spaces. Let again $\mu\in\R$ and $p \in (1, \infty)$. We denote by $\mathbb{H}_{p}^{\mu}(Q):=L^p(0,T;H_p^{\mu}(\T^d))$ the space of measurable functions $u:(0,T)\rightarrow H_p^{\mu}(\T^d)$ endowed with the norm
\begin{equation*}
\norm{u}_{\mathbb{H}_p^{\mu}(Q)}:=\left(\int_0^T\norm{u(\cdot,t)}^p_{H^{\mu}_p(\T^d)}dt\right)^{\frac1p}\ .
\end{equation*}
We define the space $\H_p^{\mu}(Q) = \H_p^{\mu; s}(Q)$ as the space of functions $u\in \mathbb{H}_p^{\mu}(Q)$ with $\partial_tu\in(\mathbb{H}_{p'}^{2s-\mu}(Q))'$ equipped with the norm
\begin{equation*}
\norm{u}_{\mathcal{H}_p^{\mu}(Q)}:=\norm{u}_{\mathbb{H}_p^{\mu}(Q)}+\norm{\partial_tu}_{(\mathbb{H}_{p'}^{2s-\mu}(Q))'}\ .
\end{equation*}
We refer the reader to \cite{CL}. Note that the above definitions make sense also when $s=1$ (we will usually drop the superscript $s$ for brevity). Those are natural spaces in the standard parabolic setting: see \cite{KrylovBookSPDE} and \cite{CT}, \cite[Chapter 6]{BKRS} for properties in the case $s = 1$. Note that $(\mathbb{H}_{p'}^{2s-\mu}(Q))'$ coincides with $\mathbb{H}_{p}^{\mu-2s}(Q)$.

Moreover, all the aforementioned spaces can be defined analogously on $\R^d$ and $\R^d \times I$, mutatis mutandis. In particular, one has to consider $(I-\Delta)^{\frac{\mu}{2}}u$ as the operator acting on tempered distributions in terms of the Fourier transform $\mathcal{F}$:
\[
\mathcal{F} [(I-\Delta)^{\frac{\mu}{2}}u ] (\xi) = (1 + 4\pi^2|\xi|^2)^{\frac{\mu}{2}} \, \mathcal{F}u(\xi), \qquad \forall \xi \in \R^d.
\]

\subsection{The fractional Laplacian on the torus}
In this section we recall the definition of the fractional Laplacian on the flat torus. Let $u:\T^d\rightarrow\R$. 
The fractional Laplacian on the torus can be defined via the multiple Fourier series
\begin{equation*}
(-\Delta_{\T^d})^{{\mu}}u(x)=(2\pi)^{2\mu}\sum_{k\in\Z^d}|k|^{2 \mu}\hat u(k) e^{2\pi ik\cdot x}\ ,\qquad \mu>0\ .
\end{equation*}
With a slight abuse of notation, we will denote this operator by $(-\Delta)^s$. Indeed, generally speaking $(-\Delta_{\T^d})^s$ coincides with the standard fractional Laplacian on $\R^d$ acting on periodic functions. We refer the reader to \cite{rs, CC} for additional details, and to \cite{rs2} for transference properties from the torus to the euclidean space. Note that in our analysis we never make use of the integral representation formula for the fractional Laplacian on the torus. 

We present two standard results that will be useful in the sequel

\begin{lemma}\label{intparts}
For every smooth $f,g$, the following identity holds true for any $s\in(0,1)$
\begin{equation*}
\int_{\T^d}(-\Delta)^sf gdx= { \int_{\T^d}(-\Delta)^{s/2}f \, (-\Delta)^{s/2} gdx} = \int_{\T^d}f(-\Delta)^s gdx\ .
\end{equation*}
\end{lemma}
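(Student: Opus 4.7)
The plan is to reduce everything to a computation of Fourier coefficients and then invoke Parseval's identity on $\T^d$. Since $f,g$ are smooth, their Fourier coefficients $\hat f(k),\hat g(k)$ decay faster than any polynomial in $|k|$, so every series that will appear converges absolutely and all termwise manipulations are legitimate.

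First I would write the Fourier representation of each of the three operators appearing in the statement. From the definition recalled just before the lemma,
\[
\widehat{(-\Delta)^s f}(k)=(2\pi)^{2s}|k|^{2s}\hat f(k),\qquad \widehat{(-\Delta)^{s/2}f}(k)=(2\pi)^s|k|^s\hat f(k),
\]
and analogously for $g$; in particular the $k=0$ mode is killed by the $|k|^{2s}$ factor, so there is no issue at the origin. Then I would apply Parseval's identity $\int_{\T^d}\varphi\,\psi\,dx=\sum_{k\in\Z^d}\hat\varphi(k)\,\overline{\hat\psi(k)}$ (valid since $f,g$, hence all the operators applied to them, are smooth and in particular in $L^2(\T^d)$) to each of the three integrals.

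A direct computation then shows
\begin{align*}
\int_{\T^d}(-\Delta)^s f\,g\,dx &=\sum_{k\in\Z^d}(2\pi)^{2s}|k|^{2s}\hat f(k)\,\overline{\hat g(k)},\\
\int_{\T^d}(-\Delta)^{s/2}f\,(-\Delta)^{s/2}g\,dx &=\sum_{k\in\Z^d}\bigl((2\pi)^s|k|^s\bigr)^2\hat f(k)\,\overline{\hat g(k)},\\
\int_{\T^d}f\,(-\Delta)^s g\,dx &=\sum_{k\in\Z^d}(2\pi)^{2s}|k|^{2s}\hat f(k)\,\overline{\hat g(k)},
\end{align*}
where in the second line I used that $(2\pi)^s|k|^s\in\R$, so its complex conjugate is itself. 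All three series coincide, which gives the claimed double identity.

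There is essentially no obstacle here: the only thing to be minimally careful about is the justification of switching the order of summation/integration, but this follows from the rapid decay of $\hat f,\hat g$ that makes everything absolutely convergent. If one wanted an alternative viewpoint one could note that this is precisely the statement that the Fourier multiplier $(2\pi)^{2s}|k|^{2s}$ is real-valued and factors as the square of $(2\pi)^s|k|^s$; the lemma is then the self-adjointness of $(-\Delta)^s$ together with the identity $(-\Delta)^s=(-\Delta)^{s/2}\circ(-\Delta)^{s/2}$ on smooth functions.
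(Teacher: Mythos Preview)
Your proof is correct and is essentially the same argument as the paper's: both reduce the three integrals to Fourier coefficients via the orthogonality of the exponentials (which you package as Parseval's identity) and then observe that the real multiplier $(2\pi)^{2s}|k|^{2s}$ factors as the square of $(2\pi)^s|k|^s$. The only cosmetic point is that your stated form of Parseval, $\int_{\T^d}\varphi\,\psi\,dx=\sum_{k}\hat\varphi(k)\,\overline{\hat\psi(k)}$, presumes $\psi$ real-valued; for general smooth $g$ one should write $\sum_k \hat\varphi(k)\hat\psi(-k)$, which is exactly what the paper obtains by direct expansion, and the rest of your computation goes through unchanged.
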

\begin{proof}
The functions $f$ and $g$ can be written by multiple Fourier series expansion
\begin{equation*}
f(x)=\sum_{\nu\in\Z^d}\hat f(\nu)e^{2\pi i\nu\cdot x}\text{ and }g(x)=\sum_{\mu\in\Z^d}\hat g(\mu) e^{2\pi i\mu\cdot x}.
\end{equation*}
Then
\begin{multline*}
\int_{\T^d}(-\Delta)^sfgdx=(2\pi)^{2s}\int_{\T^d}\sum_{\nu,\mu\in\Z^d}|\nu|^{2s}\hat f(\nu)e^{2\pi i\nu\cdot x}\hat g(\mu)e^{2\pi i\mu\cdot x}dx\\
=(2\pi)^{2s}\sum_{\nu,\mu\in\Z^d}|\nu|^{2s}\hat f(\nu) \hat g(\mu)\int_{\T^d}e^{2\pi i(\nu+\mu)\cdot x}dx=(2\pi)^{2s}\sum_{\nu+\mu=0}|\mu|^{s}|\nu|^s\hat f(\nu)\hat g(\mu)\int_{\T^d}e^{2\pi i(\nu+\mu)\cdot x}dx\\=(2\pi)^{2s}\sum_{\nu,\mu\in\Z^d}|\mu|^{s}|\nu|^s\hat f(\nu)\hat g(\mu)\int_{\T^d}e^{2\pi i(\nu+\mu)\cdot x}dx\\
=(2\pi)^{2s}\int_{\T^d}\!\!\sum_{\nu,\mu\in\Z^d}|\nu|^s\hat f(\nu)e^{i\nu\cdot x}|\mu|^{s}\hat g(\mu)e^{2\pi i\mu\cdot x}dx
=\int_{\T^d}(-\Delta)^{\frac{s}{2}}f(-\Delta)^{\frac{s}{2}}gdx\ ,
\end{multline*}
where we used that $\int_{\T^d}e^{2\pi i(\nu+\mu)\cdot x}dx=0$ if and only if $\mu+\nu\neq0$ and the fact that the Fourier series defining $f$ and $g$ converge absolutely.
\end{proof}

\begin{rem}\label{isometry}
We point out that the operator $(I-\Delta)^{\frac{\mu}{2}}$ maps isometrically $H_p^{\eta+\mu}(\T^d)$ to $H_p^{\eta}(\T^d)$ (and therefore spaces $\H_p^{\eta+\mu}$ to $\H_p^{\eta}$) for any $\eta,\mu\in\R$, 
Moreover, for $\mu > 0$, the operator $(-\Delta)^{\frac{\mu}{2}}$ is bounded from $H_p^{\eta+\mu}(\T^d)$ to $H_p^{\eta}(\T^d)$. Indeed, $T^{\mu}:=[(-\Delta)^{\frac{\mu}{2}}(I-\Delta)^{-\frac{\mu}{2}}]$, $\mu>0$ is bounded in $L^p(\R^d)$ (see \cite[p. 133]{Stein1}), so
\begin{equation}\label{DI-Drn}
\| (-\Delta)^\frac{\mu}{2} u\|_{L^p(\R^d)} \le C(s,p) \|u\|_{H^{\mu}_p(\R^d)}.
\end{equation}
In other words, $(2\pi)^\mu|\xi|^{\mu}(1+4\pi^2|\xi|^2)^{-\frac{\mu}{2}}$ defines a Fourier multiplier on $L^p(\R^d)$. Then, by the transference result \cite[Theorem VIII.3.8]{SW}, the periodized operator given by
\[
\tilde{T}^{\mu}u:=\sum_{k\in\Z^d}(2\pi)^\mu|k|^{\mu}(1+4\pi^2|k|^2)^{-\frac{\mu}{2}}\hat u(k)e^{2\pi ik\cdot x}
\]
is in turn bounded in $L^p(\T^d)$. It then follows
\begin{equation}\label{DI-Dt}
\| (-\Delta)^\frac{\mu}{2} u\|_{L^p(\T^d)}= \| \tilde{T}^{\mu}(I-\Delta)^{\frac{\mu}{2}}u\|_{L^p(\T^d)}\leq C\|(I-\Delta)^{\frac{\mu}{2}}u\|_{L^p(\T^d)}=C\|u\|_{H^{\mu}_p(\T^d)}\ ,
\end{equation}
so $(-\Delta)^{\frac{\mu}{2}}$ is bounded from $H_p^{\mu}(\T^d)$ to $L^p(\T^d)$. The general case follows by using the isometry $(I-\Delta)^{\frac{\eta}{2}}$.

Similarly, $(1+(2\pi)^\mu|\xi|^{\mu})/(1+4\pi^2|\xi|^2)^{\frac{\mu}{2}}$ and $(1+4\pi^2|\xi|^2)^{\frac{\mu}{2}}/(1+(2\pi)^\mu|\xi|^{\mu}) $ define Fourier multipliers on $L^p(\R^d)$ for $1 < p <\infty$, and by continuity they transfer to $L^p(\T^d)$. This proves the equivalence of norms $\|\cdot\|_{\mu,p}$ and $\|\cdot\|_{p} + \|(-\Delta)^{\frac{\mu}{2}}\cdot\|_{p}$.

By analogous arguments involving Fourier multipliers, one proves that $H_p^{k}(\T^d)$ coincides with $W^{k, p}(\T^d)$ when $k$ is a non-positive integer and $p\in(1,\infty)$. See \cite{GrafakosModern} for the euclidean case, that easily transfers to the periodic setting.

\end{rem}

The following interpolation estimates hold.

\begin{lemma}\label{LpestimatesT}
Let $u\in L^p(\T^d)$, $p\in(1,\infty)$.
\begin{itemize}
\item[(i)] If $s\in(0,\frac12)$ and $D u\in L^p(\T^d)$, then for every $\delta > 0$ there exists $C(\delta) > 0$ depending on $\delta, d,s,p$ such that
\begin{equation*}
\norm{(-\Delta)^su}_p \leq \delta \norm{Du}_p+C(\delta) \norm{u}_p.
\end{equation*}
\item[(ii)] If $s\in[\frac12,1)$ and $D^2 u\in L^p(\T^d)$, then for every $\delta > 0$ there exists $C(\delta) > 0$ depending on $\delta, d,s,p$ such that
\begin{equation*}
\norm{(-\Delta)^su}_p\leq \delta \norm{D^2u}_p+C(\delta)\norm{u}_p.
\end{equation*}
\end{itemize}
\end{lemma}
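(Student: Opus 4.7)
The plan is to reduce both estimates to the interpolation inequality
$$
\|u\|_{2s,p} \leq C\,\|u\|_{p}^{1-\theta}\,\|u\|_{W^{k,p}(\T^d)}^{\theta}
$$
in Bessel potential norms, with a suitable choice of $(k,\theta)$, and then convert the product estimate into a $\delta$-additive one by Young's inequality. The first step is to use Remark \ref{isometry}, which yields $\|(-\Delta)^{s}u\|_{p}\leq C\,\|u\|_{2s,p}$, so that the problem reduces to bounding $\|u\|_{2s,p}$ by a controlled combination of $\|u\|_{p}$ and of a higher-order norm. The key ingredient is then the complex-interpolation identity $H^{2s}_{p}(\T^d)\simeq[L^{p}(\T^d),W^{k,p}(\T^d)]_{\theta}$ with $2s=k\theta$, recalled in Section \ref{susb} from \cite[Section 3]{ScT} and \cite[Theorem 6.4.5]{BL}; combined with the classical contraction property $\|u\|_{[X,Y]_{\theta}}\leq\|u\|_{X}^{1-\theta}\|u\|_{Y}^{\theta}$ of complex interpolation spaces, it produces exactly the product estimate above.

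For case (i), where $s\in(0,1/2)$, I would take $k=1$ and $\theta=2s\in(0,1)$, so that $\|u\|_{2s,p}\leq C\,\|u\|_{p}^{1-2s}(\|u\|_{p}+\|Du\|_{p})^{2s}$. Applying Young's inequality with conjugate exponents $\tfrac{1}{1-2s}$ and $\tfrac{1}{2s}$ to this product then gives, for every $\delta>0$,
$$
\|u\|_{2s,p}\leq \delta\bigl(\|u\|_{p}+\|Du\|_{p}\bigr)+C(\delta)\,\|u\|_{p},
$$
which yields (i) after absorbing the $\delta\|u\|_{p}$ term into $C(\delta)\|u\|_{p}$.

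For case (ii), where $s\in[1/2,1)$, I would take $k=2$ and $\theta=s\in[1/2,1)$; the same scheme gives $\|u\|_{2s,p}\leq C\,\|u\|_{p}^{1-s}(\|u\|_{p}+\|Du\|_{p}+\|D^{2}u\|_{p})^{s}$. Here one has to absorb the intermediate term $\|Du\|_{p}$; this is handled by the standard Ehrling-type inequality $\|Du\|_{p}\leq\eta\,\|D^{2}u\|_{p}+C(\eta)\,\|u\|_{p}$, which is itself a particular instance of the same interpolation scheme (with $k=2$, $\theta=1/2$) followed by Young. Plugging this in and applying Young once more with exponents $\tfrac{1}{1-s},\tfrac{1}{s}$ yields the desired estimate.

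The main point that needs care, rather than any real obstacle, is to check that the product inequality coming from complex interpolation has the correct homogeneity and is available with constant independent of $u$: this is a classical consequence of the three-lines lemma in the definition of $[X,Y]_{\theta}$, and the identification of $H^{2s}_{p}(\T^d)$ as an interpolation space transfers it directly to our setting. Apart from this, the whole argument is essentially bookkeeping on the exponents in Young's inequality, and nothing specifically nonlocal or torus-dependent enters beyond what is already encoded in Remark \ref{isometry}.
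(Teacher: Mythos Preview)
Your proposal is correct and follows essentially the same route as the paper: bound $\|(-\Delta)^s u\|_p$ by $\|u\|_{2s,p}$ via Remark \ref{isometry}, identify $H^{2s}_p(\T^d)$ as the complex interpolation space $[L^p(\T^d),W^{k,p}(\T^d)]_\theta$ with $2s=k\theta$, and split the resulting product by Young's inequality. The paper only writes out case (i) (with $k=1$, $\theta=2s$) and declares (ii) analogous; your treatment of (ii) via $k=2$, $\theta=s$ together with the Ehrling-type absorption of $\|Du\|_p$ is exactly the expected elaboration.
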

\begin{proof}
The proof follows by interpolation arguments. We prove only the case (i), the other being similar. Since $H_p^{2s}(\T^d)\simeq [L^p(\T^d),W^{1,p}(\T^d)]_{\theta}$, $\theta=2s$, by \eqref{DI-Dt} and Young's inequality we have
\begin{equation*}
\norm{(-\Delta)^su}_p\leq \norm{u}_{2s,p}\leq C\norm{u}_p^{1-\theta}\norm{u}_{1,p}^{\theta}\leq (1-2s)\left(\frac{C}{\eps}\right)^{\frac{1}{1-2s}}\norm{u}_p+2s\epsilon^{\frac{1}{2s}}\norm{u}_{1,p}
\end{equation*}
where $C=C(d,s,p)$. We then conclude (i) by setting $\delta:=2s\epsilon^{\frac{1}{2s}}$ and $C(\delta):=(1-2s)\left(\frac{C}{\eps}\right)^{\frac{1}{1-2s}} +2s\epsilon^{\frac{1}{2s}}$.

\end{proof}

\subsubsection{Embedding Theorems for $H^\mu_p$}\label{SecEmb}

We recall some classical continuous embeddings for (stationary) Bessel potential spaces $H_p^{\mu}(\T^d)$.
\begin{lemma}\label{inclstat}
\begin{itemize}
  \item[(i)] Let $\nu,\mu\in\R$ with $\nu\leq\mu$, then $H_p^{\mu}(\T^d)\hookrightarrow H_p^{\nu}(\T^d)$.
  \item[(ii)] If $p\mu>d$ and $\mu-d/p$ is not an integer, then $H_p^{\mu}(\T^d)\hookrightarrow C^{\mu-d/p}(\T^d)$.
  \item[(iii)] Let $\nu,\mu\in\R$ with $\nu\leq\mu$, $p,q\in(1,\infty)$ and
  \[
  \mu-\frac{d}{p}=\nu-\frac{d}{q}\ ,
  \]
  then $H_p^{\mu}(\T^d)\hookrightarrow H_q^{\nu}(\T^d)$.
\end{itemize}
\end{lemma}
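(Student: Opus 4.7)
The unifying theme is to deduce each embedding from its Euclidean counterpart through the transference principle \cite[Thm.~VIII.3.8]{SW} already invoked in Remark~\ref{isometry}. For (i), since $\nu-\mu\le 0$, the symbol $m(\xi)=(1+4\pi^2|\xi|^2)^{(\nu-\mu)/2}$ is a bounded smooth function satisfying the Mihlin--H\"ormander derivative estimates $|\xi|^{|\alpha|}|\partial^\alpha m(\xi)|\le C_\alpha$, hence defines a bounded Fourier multiplier on $L^p(\R^d)$ for $p\in(1,\infty)$. Periodizing the operator yields boundedness of $(I-\Delta)^{(\nu-\mu)/2}$ on $L^p(\T^d)$, and writing
\[
\|u\|_{\nu,p}=\|(I-\Delta)^{(\nu-\mu)/2}\bigl[(I-\Delta)^{\mu/2}u\bigr]\|_p\le C\,\|u\|_{\mu,p}
\]
gives the embedding.

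For (iii), using that $(I-\Delta)^{\nu/2}$ is an isometry between the corresponding scales (Remark~\ref{isometry}), it suffices to prove that $(I-\Delta)^{-(\mu-\nu)/2}$ maps $L^p(\T^d)$ into $L^q(\T^d)$ when $\tfrac1p-\tfrac1q=\tfrac{\mu-\nu}{d}$. On $\R^d$ this operator is convolution with the Bessel kernel $G_{\mu-\nu}$, which differs from the Riesz potential $c\,|x|^{\mu-\nu-d}$ by a function that is smooth and decays exponentially at infinity. The Hardy--Littlewood--Sobolev inequality applied to the Riesz-potential part, together with Young's inequality for the smooth rapidly-decaying remainder, gives the $L^p(\R^d)\to L^q(\R^d)$ bound; transference to the torus then concludes the argument as before.

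The delicate case is (ii). My plan is to periodize the problem: given $u\in H_p^\mu(\T^d)$, extend $u$ by periodicity to $\R^d$ and multiply by a cutoff $\chi\in C_c^\infty(\R^d)$ that equals $1$ on a neighbourhood of the fundamental cube $[0,1]^d$. The product $\chi u$ then lies in $H_p^\mu(\R^d)$, with norm controlled by $\|u\|_{\mu,p}$, because multiplication by a Schwartz function is bounded on $H_p^\mu(\R^d)$ (a paraproduct/Coifman--Meyer calculation, a convenient version of which is a consequence of the Kato--Ponce product rules discussed later in the paper). At this point the classical Euclidean Sobolev embedding $H_p^\mu(\R^d)\hookrightarrow C^{\mu-d/p}(\R^d)$ applies: one writes $\chi u=G_\mu\ast g$ with $g=(I-\Delta)^{\mu/2}(\chi u)\in L^p(\R^d)$ and estimates H\"older seminorms of the convolution directly, using the local singularity $|x|^{\mu-d}$ of $G_\mu$ when $\mu<d$ (or differentiating the kernel if $\mu\ge d$) together with its exponential decay at infinity. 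Restricting back to $[0,1]^d$, where $\chi u=u$, and invoking periodicity yields $u\in C^{\mu-d/p}(\T^d)$ with the claimed estimate. The hypothesis that $\mu-d/p$ is not an integer is essential: only in this case does the kernel estimate deliver a genuine $C^{\mu-d/p}$ bound rather than a Zygmund- or Besov-type substitute at the borderline integer exponent. The main obstacle is precisely the handling of the pointwise multiplier $\chi$ inside the fractional scale, which is the step that requires the product-rule machinery and prevents a direct one-line Fourier-series argument.
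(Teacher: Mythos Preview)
Your overall strategy for (ii) --- cut off the periodic extension by $\chi\in C_c^\infty(\R^d)$, apply the Euclidean embedding, and restrict back --- is exactly what the paper does. The difference lies in how you justify that $u\mapsto\chi u$ is bounded from $H_p^\mu(\T^d)$ into $H_p^\mu(\R^d)$. You invoke ``multiplication by a Schwartz function is bounded on $H_p^\mu(\R^d)$'', but this does not apply as stated: the periodic extension of $u$ is \emph{not} in $H_p^\mu(\R^d)$, so there is nothing to multiply inside that space, and Kato--Ponce on $\R^d$ cannot bootstrap you. The paper resolves this more simply and without any product-rule machinery: the extension operator $u\mapsto\chi u$ is trivially bounded $W^{k,p}(\T^d)\to W^{k,p}(\R^d)$ for every non-negative integer $k$ (Leibniz and compact support of $\chi$), and since $H_p^\mu$ on both sides arises by complex interpolation between $L^p$ and $W^{k,p}$, the operator interpolates to $H_p^\mu(\T^d)\to H_p^\mu(\R^d)$. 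This is the step that makes your ``main obstacle'' disappear without paraproducts.

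For (i) your direct Mihlin--H\"ormander/transference argument is correct and arguably cleaner than the paper's route, which treats (i)--(iii) uniformly through the extension operator. For (iii), however, your appeal to ``transference as before'' is not quite legitimate: the Stein--Weiss/de~Leeuw transference you cite from Remark~\ref{isometry} concerns $L^p\to L^p$ multiplier bounds, not $L^p\to L^q$ mapping properties with $q>p$, and these do not transfer automatically. You would need either to analyze the periodized Bessel kernel directly on $\T^d$ and run Hardy--Littlewood--Sobolev there, or --- as the paper does --- bypass the issue entirely by using the extension operator to pull back the Euclidean embedding $H_p^\mu(\R^d)\hookrightarrow H_q^\nu(\R^d)$, exactly as for (ii). The paper's approach thus buys uniformity across all three items and avoids both the product-rule detour and the $L^p\to L^q$ transference gap.
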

\begin{proof}
Item (i)-(iii) are proven in \cite[Corollary 13.3.9]{KrylovbookSobolev}, \cite[Theorem 13.8.1]{KrylovbookSobolev} and \cite[Theorem 13.8.7]{KrylovbookSobolev} respectively for the whole space case. The transference to the periodic setting can be obtained as follows. 
Let $\chi \in C^\infty_0(\R^d)$ be a cutoff function such that $\chi \equiv 1$ on the unit cube $[0,1]^d$ and $0 \le \chi \le 1$.

Let now $u$ be smooth function on $\T^d$, namely a smooth periodic function on $\R^d$. Then, it is easy to check that the extension operator
\begin{equation}\label{exte}
u \mapsto \tilde u = \chi u \qquad \text{on $\R^d$}
\end{equation}
extends to a linear continuous operator $W^{k,p}(\T^d) \to W^{k,p}(\R^d)$, for all non-negative integers $k$ and $p \ge 1$. The spaces $H^\mu_p(\R^d)$ and $H^\mu_p(\T^d)$ can be both obtained via complex interpolation, that is for some {$\theta \in (0,1)$} and $k \ge \mu \ge 0$, $H^\mu_p(\T^d) \simeq [L^p(\T^d),W^{k,p}(\T^d)]_{\theta} $ and $H^\mu_p(\R^d) \simeq [L^p(\R^d),W^{k,p}(\R^d)]_{\theta}$. Moreover, they coincide with $W^{\mu,p}(\T^d)$ and $W^{\mu,p}(\R^d)$ respectively when $\mu$ is a non-negative integer. Therefore, the extension operator \eqref{exte} is also bounded on $H^\mu_p(\T^d) \to H^\mu_p(\R^d)$ by interpolation (see \cite[Theorem (a), p. 59]{trbookinterpolation} and \cite[Chapter 2]{LunardiSNS}).Thus, for all $\mu \ge 0$,
\begin{equation}\label{eeq}
\|u\|_{L^p(\T^d)} \le \|\tilde u\|_{L^p(\R^d)} \le C \|\tilde u\|_{H^\mu_p(\R^d)}  \le C_1 \|u\|_{H^\mu_p(\T^d)},
\end{equation}
that implies (i) in the case $\nu = 0$ (note that for the first inequality in \eqref{eeq} to be true, it is crucial to work in $L^p$, so that the restriction operator $L^p(\R^d) \to L^p(\T^d)$ is continuous). The general case $\nu \neq 0$ follows by applying to \eqref{eeq} the isometry $(I-\Delta)^{\nu/2}$. Items (ii) and (iii) are obtained analogously.
\end{proof}

\subsubsection{Embedding Theorems for parabolic spaces $\mathcal{H}_p^{\mu}$}

We now prove continuous embedding theorems for the spaces $\mathcal{H}_p^{\mu}(Q_T) = \mathcal{H}_p^{\mu; s}(Q_T)$, where $Q_T = \T^d \times (0, T)$. As usual, we will denote continuous embeddings of Banach spaces by the symbol $X \hookrightarrow Y$. All the results of this section are valid for $s \in (0, 1]$. We will basically follow the strategy of \cite[Theorem 7.2]{KrylovBookSPDE}, where analogous results are proven for (stochastic) spaces associated to heat-type equations (that is, for $s =1$) on $\R^d \times (0, T)$ (see also \cite{KrylovJFA}). In addition, we refer to \cite[Theorem 6.2.2]{BKRS}, \cite[Proposition 2.2]{CT} and \cite[Theorem A.3]{MPR} for the case $s, \mu=1$. We first state the main result of this section and, at the end, we will deduce some useful corollaries.

\begin{thm}\label{Embedding} Let $\varepsilon > 0$, $\mu \in \R$, $p > 1$, $u\in\mathcal{H}_p^{\mu}(Q_T)$ and $u(0) \in H^{\mu - 2s/p+\eps}_p(\T^d)$. If $\beta$ is such that 
\[ \frac s p < {\beta} < s,\]
then $u\in C^{\frac \beta s-\frac1p}([0,T];H_p^{\mu-2\beta}(\T^d))$. In particular, there exists $C > 0$ depending on $d,p,\beta,s,T,\eps$, such that 
\begin{equation*}
\norm{u(\cdot,t)-u(\cdot,\tau)}^p_{\mu-2\beta,p}\leq C |t-\tau|^{\frac{\beta}{s}p-1}(\norm{u}_{\mathcal{H}_p^{\mu}(Q_T)}+ \norm{u(0)}_{\mu-2s/p+\eps,p})\end{equation*}
for $0 \le t,\tau\leq T$. Hence,
\begin{equation}\label{holder-bessel}
\norm{u}_{C^{\frac{\beta}{s}-\frac1p}([0,T];H_p^{\mu-2\beta}(\T^d))}\leq C(\norm{u}_{\mathcal{H}_p^{\mu}(Q_T)}+\norm{u(0)}_{\mu-2s/p+\eps,p})\ .
\end{equation}
Note that the constant $C$ remains bounded for bounded values of $T$.
\end{thm}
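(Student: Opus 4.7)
The argument parallels Krylov's strategy for heat-type parabolic spaces (cf.\ \cite{KrylovBookSPDE}), adapted to the fractional-diffusion setting. As a preliminary reduction, I would apply the isometry $(I-\Delta)^{(\mu-2\beta)/2}$ from Remark~\ref{isometry} to replace $u$ by $v := (I-\Delta)^{(\mu-2\beta)/2} u$, reducing the claim to the case $\mu = 2\beta$: it then suffices to show that if $v \in \mathbb{H}_p^{2\beta}(Q_T)$ with $\partial_t v \in \mathbb{H}_p^{2\beta - 2s}(Q_T)$ and $v(0) \in H_p^{2\beta - 2s/p + \varepsilon}(\T^d)$, then $v \in C^{\beta/s - 1/p}([0,T]; L^p(\T^d))$ with the corresponding quantitative bound.

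The main tool would be the variation-of-constants formula. Setting $g := \partial_t v + (-\Delta)^s v$, Remark~\ref{isometry} gives $g \in \mathbb{H}_p^{2\beta - 2s}(Q_T)$ with norm controlled by $\|v\|_{\mathcal{H}_p^{2\beta}(Q_T)}$, and Duhamel's principle yields
\[
v(t) = P_t v(0) + \int_0^t P_{t-\sigma} g(\sigma)\, d\sigma ,
\]
where $P_t = e^{-t(-\Delta)^s}$ is the fractional heat semigroup on $\T^d$. I would use two standard properties: the contraction on $L^p(\T^d)$ for $p \in (1,\infty)$ together with the smoothing estimate $\|P_t w\|_{H_p^b(\T^d)} \le C t^{-(b-a)/(2s)} \|w\|_{H_p^a(\T^d)}$ for $b \ge a$, and the H\"older-type estimate $\|(P_t - I) w\|_p \le C t^\alpha \|w\|_{H_p^{2s\alpha}(\T^d)}$ for $\alpha \in (0,1]$. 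Both follow from the Fourier-multiplier characterisation of $(-\Delta)^s$ developed in Section~\ref{sintro} and the analyticity of $P_t$ on each $H_p^\mu(\T^d)$.

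The homogeneous term is handled immediately via contraction and the H\"older-type estimate with $2s\alpha = 2\beta - 2s/p + \varepsilon$: $\|P_t v(0) - P_\tau v(0)\|_p \le \|(P_{|t-\tau|} - I) v(0)\|_p \le C|t-\tau|^{\beta/s - 1/p + \varepsilon/(2s)} \|v(0)\|_{2\beta - 2s/p + \varepsilon, p}$, which is slightly better than required. For the Duhamel integral I would split $I(t) - I(\tau) = \int_\tau^t P_{t-\sigma} g(\sigma)\, d\sigma + \int_0^\tau [P_{t-\sigma} - P_{\tau-\sigma}] g(\sigma)\, d\sigma$. The first summand is bounded via $\|P_{t-\sigma} g(\sigma)\|_p \le C(t-\sigma)^{-(1-\beta/s)} \|g(\sigma)\|_{H_p^{2\beta-2s}}$ combined with H\"older's inequality in $\sigma$; the convergence of the resulting singular integral is guaranteed exactly by $(1-\beta/s)p' < 1$, i.e.\ by the hypothesis $\beta > s/p$, and a direct computation produces the sharp factor $|t-\tau|^{\beta/s - 1/p}$ times $\|g\|_{\mathbb{H}_p^{2\beta-2s}}$. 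The role played by the range $s/p < \beta < s$ thus becomes transparent.

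The delicate step, which I expect to be the main obstacle, is the estimate of the second summand. The natural route is to use $\|(P_{t-\tau} - I) w\|_p \le C(t-\tau)^\gamma \|w\|_{H_p^{2s\gamma}}$ combined with the smoothing bound $\|P_{\tau-\sigma} g(\sigma)\|_{H_p^{2s\gamma}} \le C(\tau-\sigma)^{\beta/s - 1 - \gamma} \|g(\sigma)\|_{H_p^{2\beta-2s}}$ for $\gamma \in (0, \beta/s - 1/p)$: after H\"older's inequality in $\sigma$ the $\sigma$-integral is convergent, and an interpolation between two such choices of $\gamma$ (or a weighted Young-type argument) recovers the sharp H\"older exponent $\beta/s - 1/p$ in $|t-\tau|$. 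Summing the three contributions yields the quantitative estimate in the statement, reversing the isometry produces \eqref{holder-bessel} in general, and tracking the constants through the above steps gives the uniformity in bounded $T$.
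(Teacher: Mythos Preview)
Your proposal is correct but follows a different route from the paper. The paper does not split $v(t)-v(\tau)$ into the three pieces you describe; instead it writes $u(r+\gamma)-u(r)=(\mathcal{T}_\gamma-1)u(r)+\int_0^\gamma\mathcal{T}_{\gamma-\rho}f(r+\rho)\,d\rho$, feeds $\|u(r+\gamma)-u(r)\|_p^p$ into the Garsia--Rodemich--Rumsey type inequality of Lemma~\ref{lemmakrylov1} with an auxiliary exponent $\alpha\in(1/p,\beta/s)$, and then controls the two resulting double integrals using Lemma~\ref{decaytorus} together with the parabolic estimate of Theorem~\ref{fracregtor} (this is where the initial datum enters). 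Your direct Duhamel argument is more elementary and self-contained: it avoids both Lemma~\ref{lemmakrylov1} and the forward reference to Theorem~\ref{fracregtor}. Conversely, the paper's machinery dispatches what you call the ``delicate step'' cleanly, without any ad hoc splitting of the $\sigma$-integral.

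One point to sharpen: your resolution of the second Duhamel piece is not quite right as stated. For a fixed $\gamma<\beta/s-1/p$ the argument yields only $(t-\tau)^\gamma$, and interpolating between two such $\gamma$'s still gives an exponent strictly below $\beta/s-1/p$ (the implicit constant blows up as $\gamma\uparrow\beta/s-1/p$). The standard fix is to split $\int_0^\tau$ at $\sigma=\tau-(t-\tau)$: on the near piece $\tau-\sigma\le t-\tau$ use the trivial bound $\|(P_{t-\tau}-I)w\|_p\le 2\|w\|_p$ and proceed exactly as for your first Duhamel term; on the far piece $\tau-\sigma\ge t-\tau$ take $\gamma=1$ in your smoothing bound, so that the singular factor $(\tau-\sigma)^{\beta/s-2}$ is integrable down to $\tau-\sigma=t-\tau$ and produces precisely $(t-\tau)^{\beta/s-1-1/p}$ after H\"older, which combines with the prefactor $(t-\tau)$ to give the sharp exponent. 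With this correction your scheme goes through and recovers \eqref{holder-bessel}.
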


We first need some estimates in the spaces of Bessel potentials for the semigroup $\mathcal{T}_t$ associated to the fractional Laplacian. Recall that for a given smooth $u$, $\mathcal{T}_t u := v(t)$, where $v$ solves
\[
\begin{cases}
\partial_tv + (-\Delta)^s v = 0 & \text{in $Q_T$}, \\
v(0) = u& \text{in $\T^d$}.
\end{cases}
\]
Then we have the following standard representation formula that can be obtained via Fourier transform
\begin{equation}\label{repre}
\mathcal{T}_tu(x)=\int_{\R^d}p_t(x-y)u(y)dy = p_t \star_{\R^d} u(x),
\end{equation}
where $p_t(x):= \mathcal{F}^{-1}(e^{-t|\xi|^{2s}})(x) = \int_{\R^d}e^{2\pi ix\cdot\xi}e^{-t|\xi|^{2s}}d\xi$. In the periodic case, namely  if $u:\T^d\to\R$, defining $\hat p_t(x):=\sum_{z\in\Z^d}p_t(x+z) =  \sum_{z\in\Z^d}e^{-t|z|^{2s}}e^{2\pi i z\cdot x}$, we note that
\begin{multline}\label{repre2}
\mathcal{T}_tu(x)=
\int_{\R^d}p_t(x-y)u(y)dy =
\sum_{z\in\Z^d}\int_{(0,1)^d+z} p_t(x-y)u(y+z) dy \\
=\int_{\T^d}\sum_{z\in\Z^d} p_t(x-y-z)u(y)dy=
\int_{\T^d}\hat p_t(x-y)u(y)dy = \hat p_t \star_{\T^d} u(x).
\end{multline}
This shows that some properties of the fractional heat semigroup on the whole space $\R^d$ can be directly transferred to the periodic case. First, $\norm{p_t}_{L^1(\R^d)}=1$, so $\norm{\hat p_t}_{L^1(\T^d)}=1$, readily yielding
\begin{equation}\label{claim2}
\norm{\mathcal{T}_tf}_{p}\leq \norm{f}_{p} \qquad \forall p \in [1, \infty]\ ,
\end{equation}
by Young's inequality for convolutions. Moreover, $p_t(x) = t^{-d/2s} p_1(t^{-1/2s}x)$ by rescaling, hence for a multiindex $\beta$ we have
\begin{equation}\label{TT2}
\norm{D^{\beta}\hat p_t}_{L^1(\T^d)} \leq \norm{D^{\beta} p_t}_{L^1(\R^d)} \leq t^{-|\beta|/2s} \norm{D^{\beta} p_1}_{L^1(\R^d)} \leq C t^{-|\beta|/2s},
\end{equation}
by boundedness of $\norm{D^{\beta} p_1}_{L^1(\R^d)}$ (see, e.g.,  \cite[Lemma 2.4]{Z}).
\begin{rem}\label{DerivateKernel}
Representation formula \eqref{repre2} and decay estimates \eqref{claim2} imply that for any $f\in C^{\infty}(\T^d)$ and multiindices $k,m\in\N$,
\begin{equation}\label{TT3}
\|D^{k+m}\mathcal{T}_tf\|_{p}\leq Ct^{-\frac{k}{2s}}\|D^mf\|_{p} \qquad \forall p \in [1,\infty]\ .
\end{equation}
On the one hand, this shows that for $t > 0$, $\mathcal{T}_t$ maps $C^{m}(\T^d)$ onto $C^{k+m}(\T^d)$. On the other hand, exploiting the density of $C^{\infty}(\T^d)$ in $H_p^{\mu}(\T^d)$, one obtains that $\mathcal{T}_t$ is bounded from $W^{m,p}(\T^d)$ to $W^{k+m,p}(\T^d)$.
\end{rem}
In addition, note that, for $\mu\in\R$, it results
\begin{equation}\label{exchange}
\mathcal{T}_t(I-\Delta)^{\frac{\mu}{2}}u=(I-\Delta)^{\frac{\mu}{2}}\mathcal{T}_tu.
\end{equation}
The equality can be verified by taking its Fourier transform.

\begin{lemma}\label{decaytorus}
\begin{itemize}
 \item [\it{(i)}] For any $p>1$ and $\nu\in\R$,$\gamma\geq0$, we have for all $f\in H_p^{\nu}(\T^d)$
\begin{equation*}
\norm{\mathcal{T}_tf}_{\nu+\gamma,p}\leq Ct^{-\gamma/2s}\norm{f}_{\nu,p}\ ,
\end{equation*}
where $C=C(\nu,\gamma,d,s,p)$.
  \item[(ii)]  For any $\theta\in[0,s]$ and $p>1$, there exists a constant $C=C(d,s,p,\theta)$ such that, for all $f\in H_p^{2 \theta}(\T^d)$, it holds
\begin{equation}\label{i2T}
\norm{\mathcal{T}_tf-f}_{p}\leq Ct^{\theta/s}\norm{f}_{2\theta,p}.
\end{equation}
\end{itemize}
\end{lemma}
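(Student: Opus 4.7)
The plan rests on the commutation identity \eqref{exchange} between $\mathcal{T}_t$ and fractional powers of $I - \Delta$, the $L^p$-contractivity \eqref{claim2}, the smoothing bound \eqref{TT3}, and the complex interpolation characterization of $H^\mu_p(\T^d)$ recalled in Subsection \ref{susb}. Complex interpolation between $L^p$ and $W^{k,p}$ will serve to pass from integer to fractional smoothing orders.

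For part \textit{(i)}, the first step is to reduce to the case $\nu = 0$: setting $g := (I-\Delta)^{\nu/2} f$ and using that $(I-\Delta)^{\nu/2}$ is an isometry between Bessel potential spaces (Remark \ref{isometry}) and commutes with $\mathcal{T}_t$ by \eqref{exchange}, the inequality is equivalent to $\|\mathcal{T}_t g\|_{\gamma,p} \leq C t^{-\gamma/2s}\|g\|_p$ with $\|g\|_p = \|f\|_{\nu,p}$. When $\gamma = k$ is a non-negative integer, $\|\cdot\|_{k,p}$ is equivalent to the $W^{k,p}$-norm, and \eqref{TT3} (applied with $m = 0$) together with \eqref{claim2} yields $\|\mathcal{T}_t g\|_{W^{k,p}} \leq C t^{-k/2s}\|g\|_p$. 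For general $\gamma \geq 0$, the plan is to pick an integer $k \geq \gamma$ and interpolate: the bounds $\|\mathcal{T}_t\|_{L^p \to L^p} \leq 1$ and $\|\mathcal{T}_t\|_{L^p \to W^{k,p}} \leq C t^{-k/2s}$, combined with $[L^p, W^{k,p}]_{\gamma/k} = H^\gamma_p$, give $\|\mathcal{T}_t\|_{L^p \to H^\gamma_p} \leq C^{\gamma/k} t^{-\gamma/2s}$ via the complex interpolation theorem for operator norms.

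For part \textit{(ii)}, the boundary cases are direct. For $\theta = 0$, \eqref{claim2} gives $\|\mathcal{T}_t f - f\|_p \leq 2\|f\|_p$. For $\theta = s$, the semigroup identity (verified first on $C^\infty(\T^d)$ via the Fourier representation, then extended by density of smooth functions in $H^{2s}_p$) yields
\[
\mathcal{T}_t f - f = -\int_0^t (-\Delta)^s \mathcal{T}_\tau f \, d\tau,
\]
and by commutation, \eqref{claim2} and the boundedness of $(-\Delta)^s : H^{2s}_p \to L^p$ (Remark \ref{isometry}), the integrand is controlled by $\|\mathcal{T}_\tau (-\Delta)^s f\|_p \leq \|(-\Delta)^s f\|_p \leq C\|f\|_{2s,p}$, so $\|\mathcal{T}_t f - f\|_p \leq Ct\|f\|_{2s,p}$. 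For intermediate $\theta \in (0, s)$, I would apply complex interpolation to the linear operator $\mathcal{T}_t - I$ between the pairs $(L^p, L^p)$ and $(H^{2s}_p, L^p)$, using the reiteration identity $[L^p, H^{2s}_p]_{\theta/s} = H^{2\theta}_p$ (which follows from $H^\mu_p = [L^p, W^{k,p}]_{\mu/k}$ by standard reiteration), to obtain $\|\mathcal{T}_t f - f\|_p \leq 2^{1-\theta/s}(Ct)^{\theta/s}\|f\|_{2\theta,p}$.

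The only delicate points are correctly invoking the complex interpolation theorem for operators and verifying the interpolation identities for $H^\mu_p$, but all the required machinery (complex interpolation of Bessel spaces, isometries and multiplier bounds for $I - \Delta$ and $-\Delta$) has been recorded in Subsection \ref{susb} and Remark \ref{isometry}, so this should be bookkeeping rather than a genuine obstacle.
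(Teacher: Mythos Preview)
Your argument for part \textit{(i)} is essentially identical to the paper's: reduce to $\nu=0$ via the isometry and commutation \eqref{exchange}, then interpolate between the $L^p\to L^p$ bound \eqref{claim2} and the $L^p\to W^{k,p}$ bound \eqref{TT3}.

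For part \textit{(ii)} you take a genuinely different route. The paper does \emph{not} interpolate the operator $\mathcal{T}_t-I$; instead it applies part \textit{(i)} with $\nu=2\theta$ and $\gamma=2s-2\theta$ to obtain the smoothing estimate $\|\mathcal{T}_\tau f\|_{2s,p}\le C\tau^{\theta/s-1}\|f\|_{2\theta,p}$, and then plugs this directly into the integral representation $\mathcal{T}_t f-f=-\int_0^t(-\Delta)^s\mathcal{T}_\tau f\,d\tau$, estimating $\|(-\Delta)^s\mathcal{T}_\tau f\|_p$ by $C\|\mathcal{T}_\tau f\|_{2s,p}$ and integrating $\tau^{\theta/s-1}$ over $[0,t]$. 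Your approach handles the endpoints $\theta=0$ and $\theta=s$ separately and then interpolates $\mathcal{T}_t-I$ between $L^p\to L^p$ and $H^{2s}_p\to L^p$, using $[L^p,H^{2s}_p]_{\theta/s}=H^{2\theta}_p$. Both arguments are correct; yours is perhaps cleaner conceptually and avoids the (integrable) singular factor $\tau^{\theta/s-1}$, while the paper's avoids a second invocation of the interpolation machinery by reusing part \textit{(i)}.
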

\begin{proof}
To prove (i) one can restrict without loss of generality to $\nu = 0$, since the general case will follow by replacing $f$ by $(I-\Delta)^{-\nu}f$. The proof is a consequence of (complex) interpolation between inequalities \eqref{claim2} and \eqref{TT3}, see e.g. \cite[Theorem (a) p. 59]{trbookinterpolation}.

We prove (ii), and follow the strategy of \cite[Lemma 7.3]{KrylovBookSPDE}. 
First, by (i) with $\nu=2\theta$ and $\gamma=2s-2\theta \ge 0$, we get
\begin{equation}\label{auxstima}
\norm{\mathcal{T}_tf}_{2s,p}\leq Ct^{\frac{\theta}{s}-1}\norm{f}_{2\theta,p}\ ,
\end{equation}
where $C=C(d,p,\theta,s)$. 
Note that $(\mathcal{T}_tf)'=-(-\Delta)^s\mathcal{T}_tf$. Hence, we have
\begin{equation*}
\norm{(\mathcal{T}_t-1)f}_p\leq\int_0^t\norm{[(-\Delta)^s(I-\Delta)^{-s}](I-\Delta)^{s}\mathcal{T}_{\tau}f}_p d\tau
\end{equation*}
\begin{equation*}
 \leq C\int_0^t\norm{\mathcal{T}_{\tau}f}_{2s,p} d\tau\leq C\norm{f}_{2\theta,p} \int_0^t\tau^{\frac{\theta}{s}-1}d\tau=Ct^{\frac{\theta}{s}}\norm{f}_{2\theta,p}
\end{equation*}
where we used \eqref{auxstima} and the fact that $[(-\Delta)^s(I-\Delta)^{-s}]$ is bounded in $L^p(\T^d)$ (see Remark \ref{isometry}).
\end{proof}

\begin{rem}\label{anse} We observe that $-(-\Delta)^s$ generates an analytic semigroup $\mathcal{T}_t$ on $L^p(\T^d)$ for all $p > 1$, since the following inequality
\[
\norm{-(-\Delta)^s\mathcal{T}_tf}_{p}\leq Ct^{-1}\norm{f}_{p}
\]
holds (then, argue via \cite[Theorem 2.5.2]{Pazy} for example). The above estimate is in turn a straightforward consequence of Lemma \ref{decaytorus}-(i) with $\nu=0$ and $\gamma=2s$. 
\end{rem}

We recall the following useful lemma, and refer to \cite[Lemma 7.4]{KrylovBookSPDE} (and references therein) for its proof.
\begin{lemma}\label{lemmakrylov1}
Let $p\geq1$ and $\alpha p>1$. Then, for any continuous $L^p$-valued function $h(\cdot)$ and $\tau\leq t$ we have
\begin{equation}\label{i3}
\norm{h(t)-h(\tau)}_{p}^p\leq C(\alpha,p)(t-\tau)^{\alpha p-1}\int_{\tau}^t\int_{\tau}^t\mathbf{1}_{r_2>r_1}\frac{\norm{h(r_2)-h(r_1)}_{p}^p}{|r_2-r_1|^{1+\alpha p}}dr_1dr_2
\end{equation}
\begin{equation*}
= C(\alpha,p)(t-\tau)^{\alpha p-1}\int_0^{t-\tau}\frac{d\gamma}{\gamma^{1+\alpha p}}\int_{\tau}^{t-\gamma}\norm{h(r+\gamma)-h(r)}_p^pdr\ .
\end{equation*}
As a consequence one has
\begin{equation}\label{i4}
\sup_{0\leq \tau<t\leq T}\frac{\norm{h(t)-h(\tau)}_p^p}{(t-\tau)^{\alpha p-1}}\leq C(\alpha,p)\int_0^T\int_0^T\mathbf{1}_{r_2>r_1}\frac{\norm{h(r_2)-h(r_1)}_{p}^p}{|r_2-r_1|^{1+\alpha p}}dr_1dr_2\ ,
\end{equation}
where $\mathbf{1}_{A}$ denotes the indicator function of a given set $A$.
\end{lemma}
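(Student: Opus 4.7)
The plan is to establish \eqref{i3} by a Garsia--Rodemich--Rumsey-type dyadic argument adapted to the $L^p$-valued setting. First, I would reduce to $\tau=0$, $t=1$ via the affine substitution $r\mapsto\tau+(t-\tau)r$ in the double integral: $\|h(t)-h(\tau)\|_p$ is unchanged, while the Jacobian and the $(1+\alpha p)$-power in the denominator cause the double integral on the right of \eqref{i3} to scale as $(t-\tau)^{1-\alpha p}$, which exactly absorbs the prefactor $(t-\tau)^{\alpha p-1}$. It therefore suffices to show $\|h(1)-h(0)\|_p^p\le C(\alpha,p)B$, where $B$ denotes the double integral on $\{0\le r_1<r_2\le 1\}$.

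Next I would introduce dyadic intervals accumulating at the endpoints, $J_n:=[2^{-n-1},2^{-n}]$ shrinking to $0$ and $K_n:=[1-2^{-n},1-2^{-n-1}]$ shrinking to $1$ ($n\ge 1$), together with the Bochner averages $\bar h_n:=|J_n|^{-1}\int_{J_n}h(r)\,dr$ and $\bar h'_n:=|K_n|^{-1}\int_{K_n}h(r)\,dr$. Using averages (rather than individual points) sidesteps the classical GRR difficulty of choosing in each interval a single point satisfying several averaged constraints at once. By continuity of $h(\cdot)$ in $L^p$, one has $\bar h_n\to h(0)$ and $\bar h'_n\to h(1)$ in $L^p$ as $n\to\infty$. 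Writing $\bar h_n-\bar h_{n+1}$ as a double average over $J_n\times J_{n+1}$, Minkowski's inequality in integral form followed by H\"older's inequality gives
\[
\|\bar h_n-\bar h_{n+1}\|_p \le (|J_n||J_{n+1}|)^{-1/p}\Bigl(\iint_{J_n\times J_{n+1}}\|h(r_2)-h(r_1)\|_p^p\,dr_1\,dr_2\Bigr)^{1/p}.
\]
Since $(r_1,r_2)\in J_{n+1}\times J_n$ forces $|r_2-r_1|\le 2^{-n}$, one substitutes pointwise $\|h(r_2)-h(r_1)\|_p^p\le 2^{-n(1+\alpha p)}g(r_1,r_2)$ with $g(r_1,r_2):=\|h(r_2)-h(r_1)\|_p^p/|r_2-r_1|^{1+\alpha p}$. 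Setting $A_n:=\iint_{J_n\times J_{n+1}}g$, a bookkeeping of powers of $2$ produces the key estimate $\|\bar h_n-\bar h_{n+1}\|_p\le C\,2^{n(1-\alpha p)/p}A_n^{1/p}$, and analogously for the primed sequence.

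To conclude, I would telescope:
\[
\|h(1)-h(0)\|_p \le \sum_{n\ge 1}\|\bar h_{n+1}-\bar h_n\|_p + \|\bar h_1-\bar h'_1\|_p + \sum_{n\ge 1}\|\bar h'_{n+1}-\bar h'_n\|_p,
\]
with the middle term controlled by the same Minkowski/H\"older route applied on $J_1\times K_1$. Raising to the $p$-th power and applying H\"older on each series (the required geometric weight being summable precisely because $\alpha p>1$), and using that the rectangles $J_n\times J_{n+1}$ are pairwise disjoint subsets of $\{r_1<r_2\}$ so that $\sum_n A_n\le B$, one obtains $\|h(1)-h(0)\|_p^p\le C(\alpha,p)B$. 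Undoing the rescaling establishes \eqref{i3}. Finally, \eqref{i4} follows immediately by taking $\sup_{0\le\tau<t\le T}$ of \eqref{i3} and enlarging the integration domain to $[0,T]^2$, while the alternative expression in the second line of \eqref{i3} is just the change of variables $(r_1,r_2)\mapsto(r,\gamma):=(r_1,r_2-r_1)$ of unit Jacobian.

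The step requiring the most care is the exponent bookkeeping: the Minkowski/H\"older step must output exactly the power $2^{n(1-\alpha p)/p}$, which becomes summable against its H\"older-conjugate weight only under the strict hypothesis $\alpha p>1$. Once this bookkeeping is set up correctly, the remaining estimates are routine.
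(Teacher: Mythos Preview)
The paper does not actually prove this lemma: it only states it and refers to \cite[Lemma 7.4]{KrylovBookSPDE} for the argument. Your proposal therefore supplies a proof where the paper gives none, and the argument you sketch is correct. The rescaling to $[0,1]$ is straightforward (the Jacobian $(t-\tau)^2$ against the denominator $(t-\tau)^{1+\alpha p}$ indeed produces the factor $(t-\tau)^{1-\alpha p}$), the dyadic chaining via Bochner averages is sound, and your exponent bookkeeping checks out: with $|J_n||J_{n+1}|=2^{-2n-3}$ and $|r_2-r_1|\le 2^{-n}$ on $J_{n+1}\times J_n$ one gets precisely $\|\bar h_n-\bar h_{n+1}\|_p\le C\,2^{n(1-\alpha p)/p}A_n^{1/p}$, and the H\"older step on the telescoping series converges exactly under $\alpha p>1$. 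The disjointness of the rectangles $J_{n+1}\times J_n\subset[0,1/2]^2$, $K_n\times K_{n+1}\subset[1/2,1]^2$ and $J_1\times K_1\subset[0,1/2]\times[1/2,1]$ gives $\sum_n A_n+\sum_n A'_n+A''\le B$.

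For comparison, the classical Garsia--Rodemich--Rumsey proof (which Krylov's argument follows) selects a \emph{point} in each dyadic interval via a level-set argument, rather than taking averages; this requires a short argument to show that one can simultaneously satisfy two constraints at each scale. Your use of Bochner averages sidesteps that selection step entirely and makes the $L^p$-valued nature of $h$ irrelevant beyond the Minkowski inequality, at the cost of needing continuity of $h$ to pass to the limit $\bar h_n\to h(0)$ (which is assumed anyway). Both routes yield the same constant dependence $C(\alpha,p)$ up to harmless numerical factors.
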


We now proceed with the proof of the embeddings of $\mathcal{H}_p^{\mu}$.

\begin{proof}[Proof of Theorem \ref{Embedding}] Note first that since the operator $(I-\Delta)^{\frac{\eta}{2}}$ maps isometrically $\mathcal{H}_p^{\mu}(Q_T)$ onto $\mathcal{H}_p^{\mu-\eta}(Q_T)$ for any $\eta,\mu$ (see Remark \ref{isometry}), we just consider the case $2\beta = \mu$. We than have to prove that
\begin{equation*}
\norm{u(t)-u(\tau)}^p_{p}\leq C |t-\tau|^{\frac{\beta}{s}p-1}(\norm{u}_{\mathcal{H}_p^{2\beta}(Q_T)}+\norm{u(0)}_{2\beta-2s/p+\eps,p}),
\end{equation*}
for $0 \le t,\tau\leq T$.

Define
\begin{equation}\label{eqv}
f:=\partial_tu+(-\Delta)^su,
\end{equation}
and by Duhamel's formula we have
\begin{equation*}
u(t)=\mathcal{T}_tu(0)+\int_{0}^t\mathcal{T}_{t-\tau}f(\tau)d\tau\ ,
\end{equation*}
where $\mathcal{T}_t$ is defined at the beginning of this section. We claim that
\begin{equation*}
u(r+\gamma)-u(r)=(\mathcal{T}_{\gamma}-1)u(r)+\int_0^{\gamma}\mathcal{T}_{\gamma-\rho}f(r+\rho)d\rho\ .
\end{equation*}
Indeed we have
\begin{multline*}
\mathcal{T}_{\gamma}u(r)-u(r)+\int_0^{\gamma}\mathcal{T}_{\gamma-\rho}f(r+\rho)d\rho \\
=\mathcal{T}_{r+\gamma}u(0)+\int_0^r \mathcal{T}_{r+\gamma-\tau}f(\tau)d\tau-u(r)+\int_0^{\gamma}\mathcal{T}_{\gamma-\rho}f(r+\rho)d\rho\\
=\mathcal{T}_{r+\gamma}u(0)+\int_{0}^{r+\gamma}\mathcal{T}_{r+\gamma-\tau}f(\tau)d\tau-u(r)=u(r+\gamma)-u(r)\ .
\end{multline*}
Therefore,
\begin{equation*}
\norm{u(r+\gamma)-u(r)}_p^p\leq C(A(r,\gamma)+B(r,\gamma))\ ,
\end{equation*}
where 
\begin{equation*}
A(r,\gamma)=\norm{(\mathcal{T}_{\gamma}-1)\, u(r)}_p^p
\end{equation*}
and
\begin{equation*}
B(r,\gamma)=\norm{\int_0^{\gamma}\mathcal{T}_{\gamma-\rho}f(r+\rho)d\rho}_p^p = \norm{\int_0^{\gamma}\mathcal{T}_{\omega}f(r+\gamma-\omega)d\omega}_p^p.
\end{equation*}

Choose  $\alpha$ so that $\frac1p < \alpha < \frac{\beta}{s}$. By Lemma \ref{lemmakrylov1} we have
\begin{equation}\label{eest1}
\norm{u(t)-u(\tau)}_p^p\leq C(\alpha,p)(t-\tau)^{\alpha p-1}(I(t,\tau)+J(t,\tau))\ ,
\end{equation}
where
\begin{equation*}
I(t,\tau)=\int_{0}^{t-\tau}\frac{d\gamma}{\gamma^{1+\alpha p}}\int_{\tau}^{t-\gamma}A(r,\gamma)dr
\end{equation*}
and
\begin{equation*}
J(t,\tau)=\int_{0}^{t-\tau}\frac{d\gamma}{\gamma^{1+\alpha p}}\int_{\tau}^{t-\gamma}B(r,\gamma)dr\ .
\end{equation*}
To estimate $B$, we use H\"older's inequality and Lemma \ref{decaytorus}-(i) (with $\nu=0$ and $\gamma=2s-2\beta\in(0,1)$. We have
\begin{multline*}
B(r,\gamma)=\int_{\T^d}\left|\int_0^{\gamma}\omega^{\frac{\beta}{s}-1}\omega^{1-\frac{\beta}{s}}\mathcal{T}_\omega f(r+\gamma-\omega)d\omega\right|^p dx \\
\leq \left(\int_0^{\gamma}\omega^{(\frac{\beta}{s}-1)q}d\omega\right)^{\frac{p}{q}}\int_0^{\gamma}\omega^{(1-\frac{\beta}{s})p}\int_{\T^d}|\mathcal{T}_{\omega}f(r+\gamma-\omega)|^pdx \, d\omega \\
 \leq C(d,p,\beta,s)\gamma^{\frac{\beta}{s}p-1}\int_0^{\gamma}\norm{f(r+\gamma-\omega)}_{2\beta-2s,p}^pd\omega \\
= C(d,p,\beta,s)\gamma^{\frac{\beta}{s}p-1}\int_0^{\gamma}\norm{f(r+\rho)}_{2\beta-2s,p}^pd\rho.
\end{multline*}
This and the inequality $\alpha<\frac{\beta}{s}$ give
\begin{multline}\label{eest2}
J(t,\tau)\leq C(d,p,\alpha,\beta,s)\int_0^{t-\tau}\frac{d\gamma}{\gamma^{2+(\alpha-\frac{\beta}{s})p}}\int_0^{\gamma}d\rho\int_{\tau}^{t-\gamma}\norm{f(r+\rho)}_{2 \beta-2s,p}^pdr
\\
\leq C(d,p,\alpha,\beta,s)\int_0^{t-\tau}\frac{d\gamma}{\gamma^{2+(\alpha-\frac{\beta}{s})p}}\int_0^{\gamma}d\rho\int_0^t\norm{f(r)}_{2\beta-2s,p}^pdr \\
= C(d,p,\alpha,\beta,s)(t-\tau)^{(-\alpha+\frac{\beta}{s})p}\int_0^t\norm{f(r)}_{2 \beta-2s,p}^pdr.
\end{multline}
Recalling that $f=\partial_tu+(-\Delta)^su$, by \eqref{DI-Drn} 
\begin{multline*}
J(t,\tau) \leq C(d,p,\alpha,\beta,s)(t-\tau)^{(-\alpha+\frac{\beta}{s})p}\int_0^t\left(\norm{\partial_tu(r)}^p_{2\beta-2s,p}+\norm{(-\Delta)^su(r)}^p_{2\beta-2s,p}\right)dr \\ \leq
C(d,p,\alpha,\beta,s)(t-\tau)^{(-\alpha+\frac{\beta}{s})p}\int_0^t\left(\norm{\partial_tu(r)}^p_{2\beta-2s,p}+\norm{u(r)}^p_{2\beta,p}\right)dr\\
= C(d,p,\alpha,\beta,s)(t-\tau)^{(-\alpha+\frac{\beta}{s})p}\left(\norm{\partial_tu}^p_{\mathbb{H}_p^{2\beta-2s}(Q_T)}+\norm{u}^p_{\mathbb{H}_p^{2\beta}(Q_T)}\right)\\
= C(d,p,\alpha,\beta,s)(t-\tau)^{(-\alpha+\frac{\beta}{s})p}\norm{u}^p_{\mathcal{H}_p^{2\beta}(Q_T)}.
\end{multline*}
 
To estimate $I$, we apply Lemma \ref{decaytorus}-(ii) with $\theta=\beta\in(0,s)$ and Theorem \ref{fracregtor} to get
\begin{multline*}
\int_0^t A(r,\gamma) dr \leq C(d,p,\beta,s) \gamma^{\frac{\beta}{s}p} \int_0^t \norm{u(r)}^p_{2\beta,p} d r \\ \leq C \gamma^{\frac{\beta}{s}p}  \|u\|_{\mathbb{H}_p^{2\beta}(Q_T)}^p \le
C_1(d,p,\alpha,\beta,s,T,\eps) \gamma^{\frac{\beta}{s}p}  (\norm{f}^p_{\mathbb{H}_p^{2\beta-2s}(Q_T)}+\norm{u(0)}^p_{2\beta-2s/p+\eps,p}).
\end{multline*}
Thus,
\begin{multline*}
I(t,\tau)\leq \int_0^{t-\tau}\frac{d\gamma}{\gamma^{1+\alpha p}}\int_0^t A(r,\gamma)dr \\
\leq C(d,p,\alpha,\beta,s,T,\eps)(t-\tau)^{(\frac{\beta}{s}-\alpha)p} (\norm{\partial_tu+(-\Delta)^su}^p_{\mathbb{H}_p^{2\beta-2s}(Q_T)}+\norm{u(0)}^p_{2\beta-2s/p+\eps,p}) \\
\leq C(d,p,\alpha,\beta,s,T,\eps)(t-\tau)^{(-\alpha+\frac{\beta}{s})p}(\norm{u}^p_{\mathcal{H}_p^{2\beta}(Q_T)}+\norm{u(0)}^p_{2\beta-2s/p+\eps,p}).
\end{multline*}
Finally, combining the last inequality with \eqref{eest1} and \eqref{eest2}, we proved that
\begin{equation}\label{quasiemb}
\norm{u(t)-u(\tau)}^p_{p}\leq C(d,p,\alpha,\beta,s,T,\eps)|t-\tau|^{\frac{\beta}{s}p-1}(\norm{u}^p_{\mathcal{H}_p^{2\beta}(Q_T)}+\norm{u(0)}^p_{2\beta-2s/p+\eps,p})\ .
\end{equation}
To obtain \eqref{holder-bessel}, in the special case $\mu = 2\beta$, it remains to show that 
\begin{equation}\label{hb}
\sup_{t\leq T}\norm{u(t)}_{p}\leq C(\norm{u}_{\H_p^{2\beta}(Q_T)}+\norm{u(0)}_{2\beta-2s/p+\eps,p})\ ,
\end{equation}
This is a consequence of \eqref{quasiemb} and the continuous embedding of $H_p^{2\beta-2s/p+\eps}(\T^d)$ into $L^p(\T^d)$, as $\beta > 2s/p$. Indeed,
\[
\norm{u(t)}^p_{p}\leq C(\beta, s, p, d) \norm{u(0)}^p_{2\beta-2s/p+\eps, p} + C T^{\frac{\beta}{s}p-1}(\norm{u}^p_{\mathcal{H}_p^{2\beta}(Q_T)}+\norm{u(0)}^p_{2\beta-2s/p+\eps,p}).
\]
\end{proof}

We now present some continuous embedding results that stem from Thereom \ref{Embedding}.

\begin{prop}\label{embe2} Let $\eps > 0$, $q\geq p>1$, $0 \le \theta \le 1$ and $\mu, \eta \in \R$ be such that
\begin{equation}\label{etamu}
\eta<\mu+\frac{d}{q}-\frac{d+2s(1-\theta)}{p}.
\end{equation}
Then, for any $u\in\mathcal{H}_p^{\mu}(Q_T)$,
\begin{equation*}
\left(\int_0^T\norm{u(\cdot,t)}_{\eta,q}^{\frac{p}{\theta}}dt\right)^{\theta}\leq C(\norm{u}_{\mathcal{H}_p^{\mu}(Q_T)}^p+\norm{u(0)}^p_{\mu-2s/p+\eps,p})\ .
\end{equation*}
In particular, if $\mu > 0$, $1 < p < \frac{d + 2s}{\mu}$ and $\frac 1 q > \frac 1 p - \frac{\mu}{d + 2s}$,
\[
\|u\|_{L^q(Q_T)} \leq C(\norm{u}_{\mathcal{H}_p^{\mu}(Q_T)}+\norm{u(0)}_{\mu-2s/p+\eps,p})
\]
Here, $C$ depends on $d,p,q,\mu,\eta,\theta,T,s$, but remains bounded for bounded values of $T$.
\end{prop}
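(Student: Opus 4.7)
The plan is to interpolate between the time-uniform spatial regularity furnished by Theorem \ref{Embedding} and the $L^p(0,T;H_p^\mu(\T^d))$-regularity built into the definition of $\mathcal{H}_p^\mu(Q_T)$, and then to conclude with a stationary Sobolev embedding.

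Concretely, I first apply Theorem \ref{Embedding} with $\beta = s/p + \delta$ for a small $\delta \in (0, s(p-1)/p)$, to be fixed later, obtaining
\[
\sup_{t \in [0,T]} \norm{u(t)}_{\mu - 2s/p - 2\delta, p} \leq C\bigl(\norm{u}_{\mathcal{H}_p^\mu(Q_T)} + \norm{u(0)}_{\mu - 2s/p + \eps, p}\bigr) =: K.
\]
For each fixed $t$, complex interpolation of the Bessel potential scale (cf.\ Remark \ref{isometry} and Subsection \ref{susb}) gives, with $\sigma := \mu - (1-\theta)(2s/p + 2\delta)$,
\[
\norm{u(t)}_{\sigma, p} \leq \norm{u(t)}_{\mu - 2s/p - 2\delta, p}^{1-\theta} \, \norm{u(t)}_{\mu, p}^{\theta}.
\]
Raising to the power $p/\theta$, integrating in $t$, and using the $L^\infty_t$-bound above together with the weighted AM--GM inequality $a^{(1-\theta)p}b^{\theta p}\le (a+b)^p$, I recover
\[
\left(\int_0^T \norm{u(t)}_{\sigma, p}^{p/\theta}\,dt\right)^{\theta} \leq C\bigl(\norm{u}^p_{\mathcal{H}_p^\mu(Q_T)} + \norm{u(0)}^p_{\mu - 2s/p + \eps, p}\bigr).
\]
I then choose
\[
\delta = \tfrac{1}{2(1-\theta)}\Bigl(\mu + \tfrac{d}{q} - \tfrac{d + 2s(1-\theta)}{p} - \eta\Bigr),
\]
strictly positive by \eqref{etamu}, so that $\sigma - d/p = \eta - d/q$, and invoke Lemma \ref{inclstat}(iii) to obtain $H_p^\sigma(\T^d) \hookrightarrow H_q^\eta(\T^d)$, finishing the proof for $\theta \in (0,1)$. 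The degenerate endpoints are handled separately: at $\theta = 1$ only pointwise Sobolev in $x$ is needed, while at $\theta = 0$ (where $p/\theta = \infty$) Theorem \ref{Embedding} combined with Lemma \ref{inclstat}(iii) does the job.

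For the \virg in particular'' statement, I apply the main estimate with $\eta = 0$ and $\theta = p/q \in (0,1]$: the spatial norm becomes $\norm{\cdot}_{L^q(\T^d)}$, condition \eqref{etamu} reduces algebraically to $\tfrac{1}{q} > \tfrac{1}{p} - \tfrac{\mu}{d+2s}$, and extracting the $p$-th root yields the stated bound. I do not expect any serious obstacle beyond routine parametric bookkeeping; the only real conceptual point---which is what produces the denominator $p$ in the term $(d+2s(1-\theta))/p$ of \eqref{etamu}, rather than the naive $d/p + 2s(1-\theta)$---is the choice of low-regularity endpoint at $H_p^{\mu - 2s/p - 2\delta}$, which is precisely the spatial scale that Theorem \ref{Embedding} delivers when $\beta$ is taken just above the critical value $s/p$ at which the H\"older-in-time exponent $\beta/s - 1/p$ vanishes.
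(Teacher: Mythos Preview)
Your proposal is correct and follows essentially the same route as the paper: interpolate the spatial scale between the time-uniform endpoint $H_p^{\mu-2\beta}$ supplied by Theorem \ref{Embedding} (with $\beta$ just above $s/p$) and the $L^p_t H_p^{\mu}$ endpoint from the definition of $\mathcal{H}_p^{\mu}$, then pass to $H_q^{\eta}$ via Lemma \ref{inclstat}. One cosmetic remark: your explicit formula for $\delta$ need not lie below $s(p-1)/p$ when the gap in \eqref{etamu} is large, but in that case any small positive $\delta$ works and you recover $\eta$ from the resulting $\sigma$ by combining parts (i) and (iii) of Lemma \ref{inclstat}; this is routine.
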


\begin{proof}
Let $0 < \beta < s$ to be chosen. Recall that, for any $\theta \in [0, 1]$, if $\nu = \nu(\beta)=(\mu-2\beta)(1-\theta)+\theta \mu$, then $H^\nu_p$ can be obtained by (complex) interpolation between $H^\mu_p$ and $H^{\mu- 2\beta}_p$ (see, e.g., \cite[Theorem 6.4.5]{BL}). Moreover, $H^\nu_p$ is continuously embedded in $H^{\nu+d/q-d/p}_q$ in view of Lemma \ref{inclstat}. Hence, for a.e. $t$,
\begin{equation*}
c(d, p, s, \beta)\norm{u(t)}_{\nu-\frac{d}{p}+\frac{d}{q},q}\leq\norm{u(t)}_{\nu,p}\leq \norm{u(t)}_{\mu- 2\beta,p}^{1-\theta}\norm{u(t)}_{\mu,p}^{\theta}.
\end{equation*}

By \eqref{etamu}, we can choose ${2\beta} > \frac {2s} p$ so that $\eta \le \nu(\beta)-\frac{d}{p}+\frac{d}{q} < \mu+\frac{d}{q}-\frac{d+2s(1-\theta)}{p}$, and therefore
\begin{multline*}
\left(\int_0^T\norm{u(t)}_{\eta,q}^{\frac{p}{\theta}}dt\right)^{\theta}\leq
C\left(\int_0^T\norm{u(t)}_{\mu-2\beta,p}^{(1-\theta)\frac{p}{\theta}}\norm{u(t)}_{\mu,p}^pdt\right)^{\theta} \\
 \leq C\sup_{t\leq T}\norm{u(t)}_{\mu-2\beta,p}^{(1-\theta)p}\left(\int_0^T\norm{u(t)}_{\mu,p}^pdt\right)^{\theta} \\
\leq C(\norm{u}_{\mathcal{H}_p^{\mu}(Q_T)} + \norm{u(0)}_{\mu-2s/p+\eps,p})^{(1-\theta)p}\norm{u}_{\mathbb{H}_p^{\mu}(Q_T)}^{\theta p} \\ \leq C(\norm{u}_{\mathcal{H}_p^{\mu}(Q_T)}+ \norm{u(0)}_{\mu-2s/p+\eps,p})^{p}
\end{multline*}
where, in the last inequality, we used Theorem \ref{Embedding} and Young's inequality.

The last statement follows by choosing $\eta = 0$ and $\theta = p/q$.
\end{proof}

\begin{prop}\label{embe3}
 Let $\eps > 0$, $\frac 1 2 < s < 1$, $p>\frac{d+2s}{2s-1}$ and $u(0) \in H^{\mu - 2s/p+\eps}_p(\T^d)$. Then for all $ u\in\mathcal{H}_p^{2s-1}(Q_T)$ the following inequality holds
\[
\norm{u}_{\mathcal{C}^{\gamma,\frac{\gamma}{2s}}(Q_T)} \le C(\norm{u}_{\mathcal{H}_p^{2s-1}(Q_T)}+\norm{u(0)}_{2s-1-2s/p+\eps,p}),
\]
where 
\[
\gamma = s - \frac s p - \frac d {2p} - \frac 1 2,
\]
and $C$ depends on $d, s, p, T$.
\end{prop}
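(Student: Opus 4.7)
The plan is to combine Theorem \ref{Embedding} (which upgrades $\mathcal{H}_p^{\mu}(Q_T)$-regularity to time-H\"older regularity with values in a lower-order Bessel space) with the stationary Sobolev embedding from Lemma \ref{inclstat}(ii) (which converts a high-enough Bessel index into scalar H\"older regularity), and finally with the continuous inclusion from Remark \ref{holderinc}. The content is thus algebraic: to pick a single interpolation parameter $\beta$ so that the time and space H\"older exponents produced along the way match the prescribed value $\gamma$.

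\textbf{Step 1: choice of $\beta$.} I apply Theorem \ref{Embedding} with $\mu=2s-1$, which for $s/p<\beta<s$ provides
\[
\|u\|_{C^{\beta/s-1/p}([0,T];H_p^{2s-1-2\beta}(\T^d))}\le C\bigl(\|u\|_{\mathcal{H}_p^{2s-1}(Q_T)}+\|u(0)\|_{2s-1-2s/p+\eps,p}\bigr).
\]
On the other hand, Lemma \ref{inclstat}(ii) gives $H_p^{2s-1-2\beta}(\T^d)\hookrightarrow C^{2s-1-2\beta-d/p}(\T^d)$ whenever $p(2s-1-2\beta)>d$ and $2s-1-2\beta-d/p\notin\mathbb{N}$. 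Matching the time exponent $\beta/s-1/p$ with $\gamma/(2s)$ and the space exponent $2s-1-2\beta-d/p$ with $\gamma$ leads uniquely to
\[
\beta_*:=\tfrac12\bigl(\gamma+2s/p\bigr)=\tfrac12\bigl(s+s/p-\tfrac12-d/(2p)\bigr).
\]

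\textbf{Step 2: checking admissibility.} With this choice, $2s-1-2\beta_*-d/p=\gamma$ by direct computation, so the Sobolev condition $p(2s-1-2\beta_*)>d$ reduces to $\gamma>0$. The hypothesis $p>(d+2s)/(2s-1)$ is equivalent, after rearrangement, exactly to $\gamma=s-\tfrac12-s/p-d/(2p)>0$, which in turn yields $\beta_*>s/p$. The upper bound $\beta_*<s$ is trivial from $s<1$ and $p>1$. Finally, $0<\gamma<1/2$, so $\gamma$ is not an integer. Thus both Theorem \ref{Embedding} and Lemma \ref{inclstat}(ii) apply with these parameters.

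\textbf{Step 3: assembly.} Chaining the two continuous embeddings produces
\[
\|u\|_{C^{\gamma/(2s)}([0,T];C^{\gamma}(\T^d))}\le C\bigl(\|u\|_{\mathcal{H}_p^{2s-1}(Q_T)}+\|u(0)\|_{2s-1-2s/p+\eps,p}\bigr).
\]
Remark \ref{holderinc} then furnishes the continuous inclusion $C^{\gamma/(2s)}([0,T];C^{\gamma}(\T^d))\hookrightarrow\mathcal{C}^{\gamma,\gamma/(2s)}(Q_T)$, which yields the claimed estimate. The ``hard part'' is really just the algebraic optimization above: the constant $\gamma$ is tuned so that the time-regularity $\beta/s-1/p$ from Theorem \ref{Embedding} and the space-regularity $2s-1-2\beta-d/p$ from the Sobolev embedding are balanced, and the constraint $\beta>s/p$ (needed to apply Theorem \ref{Embedding}) exactly matches, under $s>1/2$, the assumption $p>(d+2s)/(2s-1)$.
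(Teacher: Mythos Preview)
Your proof is correct and follows essentially the same approach as the paper: apply Theorem \ref{Embedding} with $\mu=2s-1$, then the stationary Sobolev embedding of Lemma \ref{inclstat}(ii), and finally the inclusion of Remark \ref{holderinc}, choosing $\beta$ so that the resulting time and space H\"older exponents both equal $\gamma/(2s)$ and $\gamma$ respectively. Your version is in fact more complete, as you explicitly verify the admissibility constraints $s/p<\beta_*<s$, $p(2s-1-2\beta_*)>d$, and $\gamma\notin\mathbb{N}$, which the paper leaves implicit.
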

\begin{proof}
First apply Theorem \ref{Embedding} with $\mu = 2s-1$ to get
\begin{equation*}
\mathcal{H}_p^{\mu}(Q_T)\hookrightarrow C^{\frac \beta s-\frac1p}([0,T]; H_p^{\mu-2\beta}(\T^d))\ .
\end{equation*}
Then, exploit the embedding $H_p^{\mu-2\beta}(\T^d)\hookrightarrow C^{\mu-2\beta-\frac{d}{p}}(\T^d)$ of Lemma \ref{inclstat}. By choosing $\beta$ so that $\frac \beta s-\frac1p = \frac{\gamma}{2s}$ and $\gamma$ as in the statement, then $\mu-2\beta-\frac{d}{p} = \gamma$, and one concludes by the inclusion of $C^{\frac{\gamma}{2s}}(C^\gamma)$ into ${\mathcal{C}^{\gamma,\frac{\gamma}{2s}}}$ (see Remark \ref{holderinc}).
\end{proof}

\begin{rem}
We point out that all the estimates carried out in this section can be proven exactly in the same manner for the $\R^d$ case. Indeed, the arguments turn around decay estimates for the fractional heat operator and fractional heat parabolic regularity that hold to the same extent on $\R^d$ and $\T^d$.
\end{rem}

\subsection{Relation between $H^\mu_p$ and $W^{\mu,p}$}
We prove the embeddings between $W^{\mu,p}$ and $H_p^{\mu}$ via the trace method. Without going into the details, we mention that when $p=2$ the space $W^{\mu,2}$ coincides with $H_2^{\mu}$ by properties of Fourier transform. For general $p \neq 2$, we follow the lines of \cite[Theorem 3.1]{LM}, shortening their proof by using decay estimates of Lemma \ref{decaytorus}.
\begin{lemma}\label{inclusioni}
For every $\eps>0$, $\mu\in\R$ and $1<p<\infty$ we have
\[
H_p^{\mu+\eps}(\T^d)\hookrightarrow W^{\mu,p}(\T^d)\hookrightarrow H_p^{\mu-\eps}(\T^d)\ .
\]
\end{lemma}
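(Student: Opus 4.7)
The plan is to implement the strategy of \cite[Theorem 3.1]{LM}, which uses the trace-method characterization of $W^{\mu, p}$, with the semigroup decay estimates of Lemma \ref{decaytorus} providing the shortcut alluded to in the paper.

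First, I reduce to the case $\mu \in (0, 1)$: for $\mu \geq 1$, I factor out integer derivatives, using Remark \ref{isometry} (which identifies $H_p^k$ with $W^{k, p}$ for integers $k$) together with the definition of $W^{\mu, p}$ for $\mu > 1$ via integer differentiation; for $\mu \leq 0$, I appeal to duality, since both spaces have well-behaved duals, namely $W^{-\mu, p'}$ via \cite[Theorem 1.2]{LM} and $H_{p'}^{-\mu}$ via Bessel potentials.

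For the first embedding in the case $\mu \in (0, 1)$, I fix $\eps > 0$ small enough that $\mu + \eps < 1$ and set $\alpha = 1 - \mu - 1/p$, so that by the paper's definition $W^{\mu, p}(\T^d) = T(p, \alpha, W^{1, p}, L^p)$. Given $u \in H_p^{\mu+\eps}(\T^d)$, I produce an extension $f \in W(p, \alpha, W^{1, p}, L^p)$ with $f(0) = u$ by taking
\[
f(t) = \chi(t) \mathcal{T}_t u,
\]
where $\mathcal{T}_t = e^{-t(-\Delta)^{1/2}}$ is the Poisson-type semigroup (to which Lemma \ref{decaytorus} applies with $s = 1/2$) and $\chi \in C_c^\infty([0, +\infty))$ is a cutoff with $\chi(0) = 1$ to kill integrability issues at infinity. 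Applying Lemma \ref{decaytorus}(i) with $\nu = \mu + \eps$, $\gamma = 1 - \mu - \eps$ gives
\[
\|\mathcal{T}_t u\|_{1, p} \leq C t^{-(1-\mu-\eps)} \|u\|_{\mu+\eps, p},
\]
and $\|\partial_t \mathcal{T}_t u\|_p = \|(-\Delta)^{1/2}\mathcal{T}_t u\|_p$ satisfies the same bound by Remark \ref{isometry}. A direct computation yields $\alpha p - (1-\mu-\eps)p = \eps p - 1 > -1$, whence $\int_0^R t^{\eps p - 1} dt < \infty$ (with $R$ the support of $\chi$), which bounds both $\int_0^\infty t^{\alpha p} \|f(t)\|_{1, p}^p dt$ and the principal term of $\int_0^\infty t^{\alpha p} \|f'(t)\|_p^p dt$. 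The residual term $\chi'(t) \mathcal{T}_t u$ in $f'$ is controlled by $\|u\|_p \leq \|u\|_{\mu+\eps, p}$ via the contractivity $\|\mathcal{T}_t u\|_p \le \|u\|_p$. Thus $\|u\|_{W^{\mu, p}} \leq C \|u\|_{\mu+\eps, p}$.

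The second embedding $W^{\mu, p} \hookrightarrow H_p^{\mu-\eps}$ I would derive by duality: the first embedding applied with $-\mu$ (available once the reduction step has been extended to negative indices) reads $H_{p'}^{-\mu+\eps} \hookrightarrow W^{-\mu, p'}$; taking adjoints and invoking $(W^{-\mu, p'})' = W^{\mu, p}$ and $(H_{p'}^{-\mu+\eps})' = H_p^{\mu-\eps}$ yields the claim. The main technical obstacle is precisely this reduction/duality bookkeeping, since the paper's definitions of $W^{\mu, p}$ for $\mu \geq 1$ (integer derivatives) and $\mu \le 0$ (duality) are not immediately compatible with the trace-method argument above; care is needed to avoid circularity, e.g.\ by proving the first embedding directly for $\mu \in (0,1)$ and extending via the Bessel lift $(I-\Delta)^{k/2}$ in both spaces before dualising.
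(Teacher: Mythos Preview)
Your proposal is correct and follows essentially the same approach as the paper: the paper also uses $f(t)=\zeta(t)\mathcal{T}_t u$ with the half-Laplacian semigroup and Lemma \ref{decaytorus}(i) for the first embedding, then obtains the second by shifting via the Bessel lift $(I-\Delta)^{1/2}$ and dualising, with the general $\mu$ handled exactly by your integer-derivative/duality reduction. The one point you leave slightly implicit---that $(I-\Delta)^{1/2}$ carries $W^{1-\theta,p}$ isomorphically onto $W^{-\theta,p}$---the paper verifies via the trace-method duality $T(p,\alpha,Y,X)=T(p',-\alpha,X',Y')$ from \cite{LM}, which is precisely the ingredient needed to avoid the circularity you flag.
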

\begin{proof}
\textit{Step 1}. We first prove that $H_p^{1-\theta+\eps}(\T^d)\hookrightarrow W^{1-\theta,p}(\T^d)$ for every $\eps>0$ and $\theta\in(0,1)$. To show this, it is sufficient to confine ourselves to the case $\eps<\theta$ since $H_p^{\nu}(\T^d)\hookrightarrow H_p^{\eta}(\T^d)$ for every $\nu,\eta\in\R$ such that $\nu>\eta$. Set $\lambda:=1-\theta+\eps$ and take $u\in H_p^{\lambda}(\T^d)$. We need to show the existence of $f(t)$ such that
\[
t^{\alpha}f(t)\in L^p(0,1;W^{1,p}(\T^d))
\]
\[
t^{\alpha}f'(t)\in L^p(0,1;L^p(\T^d))
\]
and
\[
f(0)=u
\]
are fulfilled, for $\alpha = \theta - 1/p$. Once one finds such $f(t)$, it is sufficient to multiply it by a continuously differentiable function $\zeta(t)$ for $t\in[0,+\infty)$, which vanishes for $t\geq1$ and it is identically 1 for $t\in[0,1/2]$ and then set $g(t)=\zeta(t)f(t)$ for $t\in[0,1]$ and $g(t)=0$ for $t>1$. As a consequence, it follows that $t^{\alpha}g(t)\in L^p(0,+\infty;W^{1,p}(\T^d))$, $t^{\alpha}g'(t)\in L^p(0,+\infty;L^p(\T^d))$ and $g(0)=f(0)=u\in W^{1-\theta,p}(\T^d)$. To reach our goal, we use the solution of the fractional heat equation with $s = 1/2$ and initial data equal to $u$, that is
\[
f(t):=\mathcal{T}_tu\ ,
\]
where here $\mathcal{T}_t$ is the semigroup associated to the half-laplacian. It is clear that $f(0)=u$. We show only that $t^{\alpha}f(t)\in L^p(0,1;W^{1,p}(\T^d))$, the other case being similar. By Lemma \ref{decaytorus}-(i) with $\nu=\lambda$ and $\gamma=\theta-\epsilon>0$ we have
\begin{multline*}
\left(\int_0^1\|t^{\alpha}\mathcal{T}_tu\|_{1,p}^pdt\right)^{\frac1p}\leq C_1\left(\int_0^1t^{\alpha p}t^{-(\theta-\eps)p}\|u\|_{\lambda,p}^pdt\right)^{\frac1p} \\
\leq C_2\left(\int_0^1t^{(\alpha-\theta+\eps)p}dt\right)^{\frac1p}\|u\|_{\lambda,p}\leq C_3.
\end{multline*}
\par\smallskip
\textit{Step 2}. We claim that for every $\eps>0$ it results $W^{1-\theta,p}(\T^d)\hookrightarrow H_p^{1-\theta-\eps}(\T^d)$. By isometry (see Remark \ref{isometry}), the operator $(I-\Delta)^{\frac12}$ maps $W^{1,p}(\T^d)$ onto $L^p(\T^d)$ and $L^p(\T^d)$ onto $W^{-1,p}(\T^d)$. In addition, it also maps $H^{1-\theta+\epsilon}(\T^d)$ onto $H^{-\theta+\epsilon}(\T^d)$. By definition we have that it is also an isometry between
\[
T(p,\alpha,W^{1,p}(\T^d),L^p(\T^d))=W^{1-\theta,p}(\T^d)
\] 
and
\begin{multline*}
T(p,\alpha,L^p(\T^d),W^{-1,p}(\T^d))=(T(p',-\alpha,W^{1,p'}(\T^d),L^{p'}(\T^d)))'\\ =(W^{1-(1/p'-\alpha),p'}(\T^d))'=W^{-\theta,p}(\T^d).
\end{multline*}
By Step 1 we obtain
\[
H^{-\theta+\eps}_p(\T^d)\hookrightarrow W^{-\theta,p}(\T^d) \ ,
\]
which turns out to hold for every $\eps>0$. By duality we also conclude $W^{\theta,p'}(\T^d)\hookrightarrow H_{p'}^{\theta-\eps}(\T^d)$ and hence the validity of the claim after replacing $\theta$ by $1-\theta$.
\par\smallskip
\textit{Step 3}. Suppose $\mu\geq0$. We first prove the left inclusion $H_p^{\mu+\epsilon}(\T^d)\hookrightarrow W^{\mu,p}(\T^d)$. Let $u\in H_p^{\mu+\epsilon}(\T^d)$. Then $D^ku\in L^p(\T^d)$ for all $|k|\leq[\mu]$, where $[\cdot]$ stands for the integer part. On the other hand, $D^ku\in H_p^{\mu+\eps-[\mu]}(\T^d)$ for $k=[\mu]$, which gives by Step 1 $D^ku\in W^{\mu-[\mu],p}(\T^d)$. Then $u\in W^{\mu,p}(\T^d)$. Conversely, if $u\in W^{\mu,p}(\T^d)$, it means that $u\in H_p^{[\mu]}(\T^d)$. Thus in view of Step 2 we obtain $D^ku\in H_p^{\mu-[\mu]-\eps}(\T^d)$, namely $u\in H_p^{\mu-\eps}(\T^d)$ which in turn implies $W^{\mu,p}(\T^d)\hookrightarrow H_p^{\mu-\eps}(\T^d)$. The case $\mu<0$ follows by the previous one arguing by duality.
\end{proof}

\section{Fractional Fokker-Planck and HJB equations}\label{SecFPHJB}

\subsection{On the fractional Fokker-Planck equation}\label{SecFP}
In this section we gather some results on fractional Fokker-Planck equations in the periodic setting of the form 
\begin{equation}\label{FFP}
\begin{cases}
\partial_tm-\sigma \Delta m +(-\Delta)^s m+\dive(bm)=0 & \text{ in }\T^d\times(0,T)\ ,\\
m(x,0)=m_0(x)& \text{ in }\T^d\ ,
\end{cases}
\end{equation}
with $\sigma \ge 0$ and $m_0\in L^\infty(\T^d)$. When $\sigma = 0$, we expect low regularity of solutions, in particular when $0< s < 1/2$. In this case we will adopt the usual notion of weak solution, with the following integrability requirements.
\begin{defn}\label{defFP}
Let $b\in L^\infty(Q_T)$ be such that\footnote{In what follows, we will denote by $[u]^-$ the negative part of $u$.} $[{\rm div}\,b]^- \in L^\infty(Q_T)$. A function \begin{equation}\label{integrm}
m\in  L^2(0,T;H_2^{s}(\T^d)) = \mathbb{H}_2^{s}(Q_T) \qquad \text{with} \quad \partial_tm \in L^2(0,T;H_2^{-1}(\T^d)) = \mathbb{H}_2^{-1}(Q_T)
\end{equation}
is a weak solution to \eqref{FFP} if, for every $\varphi\in C^{\infty}(\T^d\times[0,T))$, one has
\begin{equation*}
\iint_{Q_T}-m\partial_t\varphi-bm\cdot D\varphi +(-\Delta)^{\frac{s}{2}}m(-\Delta)^{\frac{s}{2}}\varphi \,dxdt=\int_{\T^d}\varphi(x,0)m_0(x)\,dx\ .
\end{equation*}
\end{defn}
\begin{rem}\label{uniqueFP} It can be verified that \eqref{integrm} implies $m \in C([0,T]; H^{(s-1)/{2}}_2(\T^d))$, see e.g. \cite[p. 480]{DL}. This suggests, by a density argument, that test functions $\varphi$ in the previous formulation can be chosen so that $\varphi \in L^2(0,T;H_2^{1}(\T^d))$ with $ \partial_t\varphi \in L^2(0,T;H_2^{-s}(\T^d))$, therefore satisfying $\varphi \in C([0,T]; H_2^{{(1-s)}/{2}}(\T^d))$. In this case the integration by parts in time formula holds (with an abuse of notation, integration in space is hiding duality pairings here):
\[
\iint_{Q_T}\varphi \partial_t m + m \partial_t\varphi  \, dxdt = \int_{\T^d}\varphi(x,T)m(x,T)\,dx -  \int_{\T^d}\varphi(x,0)m(x,0)\,dx.
\]

We also point out that solutions defined as in Definition \ref{defFP} are unique, as a consequence of the crucial unilateral bound on ${\rm div}\,b$. This can be justified formally by multiplying the equation by $m$ itself and deriving an usual $L^2$-energy estimate (as in \eqref{eneE} below). Since $m$ itself cannot be a test function because of the ``asymmetric'' integrability requirements on $m$ and $\partial_tm$, one has to perform a preliminary regularization procedure via convolution (see, e.g., \cite{Bris2008, WeiTian} and \cite{TesiAle}).
\end{rem}

We will need the following estimates independent of $\sigma$, for classical solutions of the viscous problem.

\begin{prop}\label{stabFP}
 Let $\sigma \ge 0$, $m_0 \in C(\T^d)$ and $b \in C_x^1(Q_T)$ such that
\[
\|m_0\|_{\infty} + \|b\|_{\infty}+ \|[{\rm div }\,b]^-\|_{\infty} \le K.
\]
Then, there exists $C= C(K)$ such that for every classical solution $m$ to \eqref{FFP} it holds
\begin{align}
&\|m\|_{\infty;Q_T} \le C, \label{minfty} \\
 \sigma \iint_{Q_T} |Dm|^2 \, dxdt+ & \iint_{Q_T} [(-\Delta)^{s/2}m]^2\, dxdt \le C, \label{gradients} \\
&  \|\partial_tm\|_{\mathbb{H}_2^{-1}(Q_T)} \le C.
\end{align}
\end{prop}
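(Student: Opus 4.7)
The plan is to derive the three estimates in order, using in an essential way the non-conservative rewriting
\[
\partial_t m - \sigma \Delta m + (-\Delta)^s m + b \cdot Dm + (\dive b)\, m = 0,
\]
and the key structural assumption that $[\dive b]^-$ is bounded (so that $\dive b \ge -K$). Throughout, I will assume $\sigma$ bounded above (say $\sigma \le 1$), which is the relevant regime for the vanishing viscosity.

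\textbf{Step 1 ($L^\infty$ bound).} I set $w(x,t) := e^{-\lambda t} m(x,t)$ with $\lambda = \|[\dive b]^-\|_\infty$, so that
\[
\partial_t w - \sigma \Delta w + (-\Delta)^s w + b \cdot Dw + (\lambda + \dive b) w = 0,
\]
with $\lambda + \dive b \ge 0$. At an interior point $(x_0, t_0)$ of positive maximum of $w$ one has $\partial_t w \ge 0$, $-\Delta w(x_0) \ge 0$, $(-\Delta)^s w(x_0) \ge 0$ (the latter by the standard pointwise sign property of $(-\Delta)^s$ at a maximum), $Dw(x_0) = 0$, and $(\lambda + \dive b)w \ge 0$; summing gives a nonnegative left-hand side, forcing the maximum to be attained at $t=0$. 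Arguing symmetrically for $-w$ yields $\|m(t)\|_\infty \le e^{\lambda t}\|m_0\|_\infty \le e^{KT} K$, which is \eqref{minfty}.

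\textbf{Step 2 (energy estimate).} Multiplying the equation by $m$, integrating in space and invoking Lemma \ref{intparts} for the fractional term, I obtain
\begin{equation*}
\frac{1}{2}\frac{d}{dt}\int_{\T^d} m^2 + \sigma \int_{\T^d} |Dm|^2 + \int_{\T^d} [(-\Delta)^{s/2}m]^2 + \int_{\T^d} m\,\dive(bm) = 0.
\end{equation*}
The drift term simplifies as $\int m\,\dive(bm) = \int m\, b \cdot Dm + \int m^2 \dive b = \tfrac{1}{2}\int m^2 \dive b$ (after integrating by parts $\int m\, b\cdot Dm = -\tfrac12 \int m^2 \dive b$), which is bounded below by $-\tfrac12 K \int m^2$. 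Gronwall's inequality then controls $\|m(t)\|_{L^2}^2$ in terms of $\|m_0\|_{L^2}^2 \le K^2$, and subsequent integration in $t \in (0,T)$ yields \eqref{gradients}.

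\textbf{Step 3 (bound on $\partial_t m$).} I write $\partial_t m = \sigma \Delta m - (-\Delta)^s m - \dive(bm)$ and estimate each term as a functional on $\mathbb{H}_2^{1}(Q_T)$. For $\varphi \in \mathbb{H}_2^{1}(Q_T)$:
\begin{itemize}
\item $|\langle \sigma \Delta m, \varphi\rangle| \le \sigma \|Dm\|_{L^2(Q_T)} \|D\varphi\|_{L^2(Q_T)} \le \sigma^{1/2} (\sigma \|Dm\|_{L^2(Q_T)}^2)^{1/2} \|\varphi\|_{\mathbb{H}_2^1(Q_T)}$, bounded by Step 2;
\item $|\langle (-\Delta)^s m, \varphi\rangle| \le \|(-\Delta)^{s/2}m\|_{L^2(Q_T)} \|(-\Delta)^{s/2}\varphi\|_{L^2(Q_T)}$ by Lemma \ref{intparts}, and since $s \le 1$ one has $\|(-\Delta)^{s/2}\varphi\|_{L^2} \le C\|\varphi\|_{H_2^1}$ (Remark \ref{isometry}), so this term is controlled by \eqref{gradients};
\item $|\langle \dive(bm), \varphi\rangle| \le \|b\|_\infty \|m\|_{L^2(Q_T)} \|D\varphi\|_{L^2(Q_T)}$, bounded by Step 1 (or Step 2).
\end{itemize}
Combining yields $\|\partial_t m\|_{\mathbb{H}_2^{-1}(Q_T)} \le C(K)$.

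The only delicate point is the pointwise sign of $(-\Delta)^s$ at a maximum in Step 1 (on the torus this is clear from the integral/series representation) and the rigorous integration by parts for the fractional term in Step 2, handled by Lemma \ref{intparts} on classical solutions. All subsequent arguments are standard and the constants depend only on $K$ (and on $T$, $s$, $d$).
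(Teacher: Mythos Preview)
Your proof is correct and follows essentially the same approach as the paper: comparison argument with an exponential weight for the $L^\infty$ bound, multiplication by $m$ and Lemma~\ref{intparts} for the energy estimate, and duality for $\partial_t m$. The only cosmetic differences are that in Step~2 the paper bounds $\int (\dive b)\,m^2$ directly via the $L^\infty$ bound from Step~1 rather than Gronwall, and in Step~3 you are actually more careful than the paper in explicitly handling the $\sigma\Delta m$ contribution (which indeed requires $\sigma$ bounded, harmless in the vanishing-viscosity regime).
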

\begin{proof}
By standard comparison arguments involving the function
\[
w(x,t):=m(x,t)e^{-(K+\eps)t}-\|m_0\|_{\infty}
\]
with $\eps \to 0$ (see e.g.  \cite[Section II.2]{GMparabolic}), one concludes
\[
\|m\|_{\infty;Q_T}\leq \|m_0\|_{\infty}e^{KT}.
\]
Multiply the equation in \eqref{FFP} by $m$ 
and integrate over $Q_T$ to get 
\[
\frac12\int_0^T\frac{d}{dt}\norm{m}_{L^2(\T^d)}^2 - \sigma\iint_{Q_T}m\Delta m\,dxdt  + \iint_{Q_T}m(-\Delta)^sm\,dxdt =  - \iint_{Q_T}m\dive(bm)dxdt
\]
Using Lemma \ref{intparts} and integrating by parts we have
\begin{multline}\label{eneE}
\frac12\int_0^T\frac{d}{dt}\norm{m(\cdot,t)}_{L^2(\T^d)}^2 + \sigma\iint_{Q_T}|Dm|^2dxdt + \iint_{Q_T}[(-\Delta)^{\frac{s}{2}}m]^2\,dxdt = \iint_{Q_T}mb\cdot Dm\, dxdt \\ = -\frac12\iint_{Q_T}(\dive b)m^2\,dxdt.
\end{multline}
Using that $[\dive(b)]^{-}\leq K$ and the $L^\infty$ bound on $m$ (one could also argue via Gronwall's lemma), we obtain
\[
\frac12\norm{m(T)}_{L^2(\T^d)}^2+\sigma\iint_{Q_T}|Dm|^2dx+\iint_{Q_T} [(-\Delta)^{s/2}m]^2\, dx\leq C(K) + \frac12\norm{m(0)}_{L^2(\T^d)}^2
\]
which gives the desired inequality \eqref{gradients}.

The last estimate follows by observing that, using the equation in \eqref{FFP},
\begin{multline*}
\left|\iint_{Q_T}\partial_t m \varphi \, dxdt\right| \le \|b\|_{L^{\infty}(Q_T)} \|m\|_{L^2(Q_T)} \| D\varphi \|_{L^2(Q_T)} + \|(-\Delta)^{\frac{s}{2}}m\|_{L^2(Q_T)} \|\varphi\|_{\mathbb{H}_2^{s}(Q_T)} \\ \le C \|\varphi\|_{\mathbb{H}_2^{1}(Q_T)}.
\end{multline*}
\end{proof}

\subsection{On the fractional HJB equation}
\subsubsection{Semiconcavity estimates}
This subsection is devoted to the analysis of semiconcavity properties of solutions to backward fractional HJB equations
\begin{equation}\label{xfracHJsemicon}
\begin{cases}
-\partial_tu-\sigma\Delta u+(-\Delta)^su+H(x,Du)=V(x,t)&\text{ in }Q_T\ ,\\
u(x, T)=u_T(x)&\text{ in }\T^d\ ,
\end{cases}
\end{equation}
We prove in particular that $u$ is semiconcave, with semiconcavity constant depending on the data and independent of $\sigma$. First, we stress that when $\sigma=0$ we mean that $u$ is a weak (energy) solution according to the following
\begin{defn}\label{weak}
Let $\sigma = 0$ and $V$ be a continuous function on $Q_T$. We say that $u \in \H_2^s(Q_T)$ with $Du \in L^{\infty}(Q_T)$ is a weak solution to \eqref{xfracHJsemicon} if
\begin{multline*}
-\int_{\T^d}\varphi(x,T)u_T(x)dx+\iint_{Q_T} \partial_t\varphi u dxdt + \iint_{Q_T}(-\Delta)^{\frac{s}{2}}u(-\Delta)^{\frac{s}{2}}\varphi\, dxdt +\iint_{Q_T}H(x, Du) \varphi dxdt \\
=  \iint_{Q_T} V \varphi \,dxdt
\end{multline*}
for all $\varphi \in C^{\infty} (\T^d\times(0,T])$.
\end{defn}

\begin{rem}\label{u0l2}
We make a preliminary observation, which we will use in the sequel. Recall that $u\in \H_2^s(Q_T)$ means $u\in L^2(0,T;H_2^s(\T^d))$ with $\partial_t u\in L^2(0,T;H_2^{-s}(\T^d))$. Note that $\H_2^s(Q_T)$ is continuously embedded into $C(0, T; L^2(\T^d))$ in view of \cite[Theorem XVIII.2.1]{DL}), so this is equivalent to
\[
\iint_{Q_T} [-\partial_t u \varphi + (-\Delta)^{\frac{s}{2}}u(-\Delta)^{\frac{s}{2}}\varphi + H(x, Du) \varphi] dxdt =  \iint_{Q_T} V \varphi \,dxdt
\]
for all $\varphi \in  \H_2^s(Q_T)$, and $u(T) = u_T$ in the $L^2$-sense. Uniqueness of solutions in this sense holds by usual energy arguments (see also Remark \ref{uniqueFP}), and is based on the crucial property $Du \in L^{\infty}(Q_T)$ and the $C^1$ regularity of $H$.
\end{rem}

\begin{prop}\label{semiconcav}
Assume that $V \in C^{2+\alpha,1+\alpha/2}(Q_T)$, \eqref{H1} and \eqref{H3}-\eqref{H5} hold, and
\[
\norm{V}_{C^{2}_x(Q_T)} + \|u_T\|_{C^2(\T^d)} \leq K
\]
for some $K > 0$. Then every classical solution $u$ to \eqref{xfracHJsemicon} satisfies
\[
D^2 u(x,t) \le C \, I \qquad \text{on $Q_T$},
\]
where $C$ depends on $K$.
\end{prop}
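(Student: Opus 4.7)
The strategy, following the approach suggested in the introduction, is to derive a uniform (in $\sigma$) one-sided bound on $w := u_{\xi\xi} = D^2 u\,\xi\cdot\xi$ for arbitrary unit $\xi \in \R^d$, by dualising the linearised HJB equation against an adjoint Fokker--Planck density and exploiting \eqref{H1} together with the convexity-type bounds \eqref{H3}--\eqref{H5}.

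\textbf{Step 1 (linearised equation).} Differentiating \eqref{xfracHJsemicon} twice in the direction $\xi$ we find that $w$ solves
\[
-\partial_t w - \sigma \Delta w + (-\Delta)^s w + D_pH(x,Du)\cdot Dw = R(x,t),
\qquad w(\cdot,T) = D^2 u_T\,\xi\cdot\xi,
\]
with
\[
R := V_{\xi\xi} - D_{xx}^2H\,\xi\cdot\xi - 2 D_{xp}^2 H\,\xi\cdot Du_\xi - D_{pp}^2 H\,Du_\xi\cdot Du_\xi.
\]

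\textbf{Step 2 (adjoint density).} Fix $(x_0,t_0)\in Q_T$ and let $\rho_0^\eta \ge 0$ be a smooth mollification of $\delta_{x_0}$ with unit mass. Let $\rho=\rho^\eta$ be the classical non-negative solution of the forward Fokker--Planck problem
\[
\partial_t \rho - \sigma\Delta \rho + (-\Delta)^s \rho - \dive(\rho\, D_p H(x,Du)) = 0 \ \text{in }\T^d\times(t_0,T),\qquad \rho(\cdot,t_0)=\rho_0^\eta,
\]
which exists and preserves mass since $Du\in L^\infty$ (classical solution). This is a mixed local/nonlocal analogue of Section \ref{SecFP}.

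\textbf{Step 3 (duality identity).} Combining the two equations and using Lemma \ref{intparts} together with integration by parts in space (both for $\Delta$ and for $D_pH\cdot D$) one obtains $\frac{d}{dt}\int_{\T^d} w\rho\,dx = \int_{\T^d} R \rho\,dx$, and so
\[
\int_{\T^d} w(x,t_0)\rho_0^\eta(x)\,dx = \int_{\T^d} w(x,T)\rho(x,T)\,dx - \int_{t_0}^T\!\!\int_{\T^d} R \rho\,dxdt.
\]

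\textbf{Step 4 (a priori bounds).} Since $H\ge 0$ and $\|V\|_\infty, \|u_T\|_\infty \le K$, comparison with $\pm(\|u_T\|_\infty + K(T-t))$ yields $\|u\|_{\infty; Q_T} \le C(K,T)$ independently of $\sigma$. Testing the HJB against $\rho$ and using the same duality computation plus \eqref{H1} (in the form $D_pH\cdot p - H \ge C_H|p|^\gamma - c_H$) gives the fundamental integral bound
\[
\int_{t_0}^T\!\!\int_{\T^d} |Du|^\gamma \rho\,dxdt \le C(K,T),
\]
uniform in $\sigma$, $\eta$ and $(x_0,t_0)$.

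\textbf{Step 5 (lower bound on $R$).} This is the main technical point. Applying \eqref{H3}, \eqref{H4} and Young's inequality, and using \eqref{H5} to exploit the ellipticity of $D_{pp}^2 H$ in the regime of large $|Du|$ (with a case split on $\{|Du|\le M\}$ vs.\ $\{|Du|>M\}$, $M$ large, where in the first region $D^2 H$ is bounded by continuity), one absorbs the cross term into $D_{pp}^2 H\,Du_\xi\cdot Du_\xi$ and concludes
\[
R \ge -C_1(|Du|^\gamma + 1) - \|V_{\xi\xi}\|_\infty,
\]
with $C_1$ depending only on the constants in \eqref{H3}--\eqref{H5}.

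\textbf{Step 6 (conclusion).} Substituting Steps 4--5 in the identity of Step 3, using $|w(\cdot,T)|\le \|u_T\|_{C^2}$ and $\int\rho(T)=1$, we get
\[
\int_{\T^d} w(x,t_0)\rho_0^\eta(x)\,dx \le \|u_T\|_{C^2} + C(K,T),
\]
with constant independent of $\sigma$, $\xi$, $t_0$ and $\eta$. Letting $\eta\to 0$ (using continuity of $w$) we obtain $w(x_0,t_0)\le C$. Taking the supremum over $|\xi|=1$ and $(x_0,t_0)\in Q_T$ gives $D^2 u\le CI$.

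\textbf{Main obstacle.} The delicate point is Step 5: since \eqref{H5} degenerates when $|Du|$ is small and provides only asymptotic (in $|p|$) ellipticity, the Cauchy--Schwarz absorption of $D_{xp}^2 H\,\xi\cdot Du_\xi$ against $D_{pp}^2 H\,Du_\xi\cdot Du_\xi$ must be performed carefully, without introducing any uncontrolled $|D^2 u|^2$ factor. A secondary issue is that the duality computation of Step 3 must be carried out with the mixed local/nonlocal dissipation (where the integration by parts for $(-\Delta)^s$ relies on Lemma \ref{intparts}); smoothness of both $u$ and $\rho$ in the approximating viscous scheme keeps all manipulations rigorous, and the final estimate remains stable when $\sigma\to 0$ because no constant depends on $\sigma$.
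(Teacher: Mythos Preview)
Your overall strategy is exactly the paper's: adjoint method, twice-differentiated HJB equation tested against a Fokker--Planck density, combined with the integral bound $\int\!\!\int |Du|^\gamma\rho \le C$ coming from \eqref{H1}. Steps~1, 2, 4 and 6 match the paper essentially verbatim.

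There is, however, a sign error in Step~3 that reverses the key inequality of Step~5. A direct computation (using self-adjointness of $(-\Delta)^s$ and integration by parts for the drift term) gives $\frac{d}{dt}\int_{\T^d} w\rho\,dx = -\int_{\T^d} R\rho\,dx$, so that
\[
\int_{\T^d} w(x,t_0)\rho_0^\eta(x)\,dx = \int_{\T^d} w(x,T)\rho(x,T)\,dx \;+\; \int_{t_0}^T\!\!\int_{\T^d} R\rho\,dxdt,
\]
and one needs an \emph{upper} bound on $R$ (or on $\int\!\!\int R\rho$), not a lower one. This is not a cosmetic point: with your sign you would need $R \ge -C(1+|Du|^\gamma)$, but the term $-D_{pp}^2H\,Du_\xi\cdot Du_\xi$ cannot be bounded from below pointwise---\eqref{H5} gives no upper bound on $D_{pp}^2H$, and on the region $\{|Du|\le M\}$ boundedness of $D^2H$ still leaves an uncontrolled $-C_M|Du_\xi|^2$. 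In short, the absorption you describe goes the wrong way.

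With the sign corrected, \eqref{H5} yields $-D_{pp}^2H\,Du_\xi\cdot Du_\xi \le -C_H|Du|^{\gamma-2}|Du_\xi|^2 + \tilde C_H$, which is now a \emph{good} (negative) term for the upper bound on $R$ and absorbs the cross term via the weighted Young inequality $|Du|^{\gamma-1}|Du_\xi| \le \tfrac{\varepsilon}{2}|Du|^{\gamma-2}|Du_\xi|^2 + \tfrac{1}{2\varepsilon}|Du|^\gamma$. This is precisely what the paper does, the only stylistic difference being that the paper keeps $\int\!\!\int D_{pp}^2H\,Dv\cdot Dv\,\rho$ as a positive term on the left of the duality identity and performs the absorption at the integrated level, whereas your (corrected) Step~5 would phrase it as a pointwise upper bound on $R$. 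The case split $\{|Du|\le M\}$ versus $\{|Du|>M\}$ is then unnecessary.
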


The proof will be accomplished via the so-called adjoint method, that is, by using information of the dual linearized problem. This procedure is particularly effective when the Hamiltonian lacks uniform convexity. Here, we are inspired by some results in \cite{GomesBook}, see also references therein. We stress that we do not require convexity of $H$, but just assumptions (H1) and (H3)-(H5). Generally, for uniformly convex Hamiltonians similar results can be obtained in a more straightforward way through maximum principle arguments. When dealing with non-convex Hamiltonians, such approach fails in general. 

For any given $\rho_{\tau}\in C^{\infty}(\T^d)$, $\rho_{\tau}\geq0$, $\tau\in[0,T)$ and $\norm{\rho_{\tau}}_{L^1(\T^d)}=1$ we consider the adjoint equation
\begin{equation}\label{adjo}
\begin{cases}
\partial_t\rho-\sigma\Delta\rho+(-\Delta)^s\rho-\dive(D_pH(x,Du)\rho)=0&\text{ in }\T^d\times[\tau,T]\ ,\\
\rho(x,\tau)=\rho_{\tau}(x)&\text{ on }\T^d\ .
\end{cases}
\end{equation}

We have the following preliminary result
\begin{lemma}\label{reprsemic} There exists a classical solution $\rho$ to \eqref{adjo}. Moreover,
\[
\int_{\tau}^T\int_{\T^d}|Du|^{\gamma}\rho \, dxdt \le C,
\]
where $C$ depends on $K$ and not on $\rho_\tau$ nor $\tau$.
\end{lemma}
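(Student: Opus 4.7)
The approach I would use is the classical adjoint (duality) argument, as announced by the authors: equation \eqref{adjo} is the formal adjoint of the linearization of \eqref{xfracHJsemicon}, so multiplying the HJB by $\rho$ and integrating should cancel the differential terms and leave a Legendre-type expression to which \eqref{H1} can be applied. The proof naturally splits into an existence step and a duality computation.

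For the existence of $\rho$, I would view \eqref{adjo} as a linear parabolic Fokker--Planck-type equation with drift $b(x,t) = -D_pH(x,Du(x,t))$. Since $u$ is a classical solution of \eqref{xfracHJsemicon}, $Du$ is (at least) Hölder continuous in $x$ and bounded, and \eqref{H4} together with $H \in C^3$ then give that $b$ and $\dive b$ are bounded and Hölder in $x$. For $\sigma > 0$ the equation is uniformly parabolic with the nonlocal term $(-\Delta)^s\rho$ as a lower-order perturbation (recall $s<1$), so standard Schauder/$L^p$ parabolic theory (see the appendices and Section \ref{SecFP}) produces a classical solution starting from the smooth datum $\rho_\tau$. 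Positivity $\rho\geq 0$ follows from the parabolic maximum principle applied to \eqref{adjo} (noting that the test function $\varphi \equiv 0$ is a subsolution and $\rho_\tau \geq 0$), while integrating \eqref{adjo} over $\T^d$ gives $\frac{d}{dt}\int_{\T^d}\rho\,dx = 0$, so $\int_{\T^d}\rho(\cdot,t)\,dx = 1$ for every $t \in [\tau,T]$.

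For the bound, I would multiply \eqref{xfracHJsemicon} by $\rho$, integrate over $\T^d \times [\tau,T]$, integrate by parts in $t$, use Lemma \ref{intparts} (together with the usual Green's formula for the $-\sigma \Delta$ term) to move derivatives onto $\rho$, and use \eqref{adjo} to eliminate $\partial_t\rho - \sigma \Delta\rho + (-\Delta)^s\rho = \dive(D_pH(x,Du)\rho)$. One further integration by parts in space on the divergence term yields the duality identity
\begin{equation*}
\int_\tau^T\!\!\int_{\T^d}\bigl[D_pH(x,Du)\cdot Du - H(x,Du)\bigr]\rho\,dxdt = \int_{\T^d} u(\cdot,\tau)\rho_\tau\,dx - \int_{\T^d} u_T\,\rho(\cdot,T)\,dx - \int_\tau^T\!\!\int_{\T^d} V\rho\,dxdt.
\end{equation*}
Hypothesis \eqref{H1} then gives $C_H \iint |Du|^\gamma \rho \leq c_H T + \text{RHS}$. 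Since $\rho \ge 0$ and $\int_{\T^d}\rho(\cdot,t)\,dx = 1$, the right-hand side is bounded by $2\|u\|_{\infty;Q_T} + T\|V\|_{\infty;Q_T}$. A comparison argument for \eqref{xfracHJsemicon}, using $H \geq 0$ and $H(x,0)=0$ (so that $t \mapsto \|u_T\|_\infty + (T-t)\|V\|_{\infty;Q_T}$ and its negative are super/subsolutions), delivers $\|u\|_{\infty;Q_T}\leq \|u_T\|_\infty + T\|V\|_{\infty;Q_T} \leq K(1+T)$. Dividing by $C_H$ finishes the proof, with $C$ depending only on $K$ (and the structural constants $c_H,C_H,T$), and in particular independent of $\rho_\tau$ and $\tau$.

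The main technical point is to make the integrations by parts rigorous: this requires enough regularity of both $u$ and $\rho$ to justify Lemma \ref{intparts} and the boundary evaluation at $t=\tau,T$. In the regime $\sigma>0$ classical parabolic theory delivers this directly. If one needs the estimate at $\sigma=0$ (in particular for $s\leq 1/2$, where \eqref{adjo} only admits an energy solution in the sense of Definition \ref{defFP}), then I would first derive the estimate for $\sigma>0$ with constant independent of $\sigma$ (which is automatic from the computation above) and then pass to the limit $\sigma\to 0$ using the stability estimates of Proposition \ref{stabFP}; no uniform-in-$\sigma$ regularity of $\rho$ beyond what Proposition \ref{stabFP} already gives is needed, since the duality identity only involves $\rho$ tested against $|Du|^\gamma$ and the bounded functions $u,u_T,V$.
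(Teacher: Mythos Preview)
Your proposal is correct and follows essentially the same route as the paper: derive the duality (representation) identity by testing \eqref{xfracHJsemicon} against $\rho$, apply \eqref{H1} to extract $C_H\iint|Du|^\gamma\rho$, and close with a sup-norm bound on $u$ via comparison together with $\|\rho(\cdot,t)\|_1=1$. The paper's proof is slightly terser (it cites \cite{GMparabolic} for existence and does not spell out positivity/mass conservation of $\rho$), and your final paragraph on the $\sigma=0$ limit is extraneous here since the lemma is used only in the viscous setting.
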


\begin{proof}
The well-posedness of \eqref{adjo} is a consequence of \cite[Theorem II.3.1]{GMparabolic} and the regularity assumptions on $H$ and $u$.
By multiplying the fractional HJB equation by $\rho$ and the adjoint equation by $u$, one easily obtains the following formula
\begin{equation}\label{reprformula}
\int_{\T^d}u(x,\tau)\rho_{\tau}(x)dx=\int_{\T^d}u(x,T)\rho(x,T)dx +\int_\tau^{T}\int_{\T^d}V\rho \, dxdt+
\end{equation}
\begin{equation*}
+\int_{\tau}^{T}\int_{\T^d}(D_pH(x,Du)\cdot Du-H(x,Du))\rho \, dxdt.
\end{equation*}
Then, by (H1) we get
\begin{multline}\label{ineq1}
\int_{\T^d}u(x,\tau)\rho_\tau(x)dx\geq \int_{\tau}^{T}\int_{\T^d}V\rho\,dxdt +C_H\int_{\tau}^T\int_{\T^d}|Du|^{\gamma}\rho\,dxdt-\\-c_H\int_{\tau}^T\int_{\T^d}\rho\,dxdt
+\int_{\T^d}\rho(x,T)u(x,T)dx.
\end{multline}
Then, since $u$ is a classical solution to \eqref{xfracHJsemicon}, a standard linearization argument and the application of the Comparison Principle for linear viscous integro-differential PDE (see, e.g. \cite[Section II.2]{GMparabolic}) yield
\begin{equation}\label{comp}
\norm{u}_{\infty;Q}\leq \norm{u_T}_{\infty;\T^d} + T\big(\norm{V}_{\infty;Q} + \norm{H(\cdot,0)}_{\infty;\T^d}\big).
\end{equation}
Finally, plugging \eqref{comp} in \eqref{ineq1} and using the fact that $\|\rho(t)\|_1 = 1$ for all $t$, we conclude the desired estimate.
\end{proof}


We now prove the semiconcavity estimate.
\begin{proof}[Proof of Proposition \ref{semiconcav}]
Since $V \in C^{2+\alpha,1+\alpha/2}(Q_T)$, by a bootstrap argument $u$ belongs to $C^{4+\alpha,2+\alpha/2}(Q_T)$ (see Proposition \ref{existenceHJ} below). So, we can differentiate twice the equation in any direction $\xi\in\R^d$, $|\xi|=1$. Observe that $v=u_{\xi}$ satisfies
\begin{equation*}
-\partial_tv-\sigma\Delta v+(-\Delta)^sv+D_pH(x,Du)\cdot Dv+D_{\xi}H(x,Du)=V_{\xi}\ ,\quad v(x,0)=u_{\xi}(0)
\end{equation*}
and $w=u_{\xi\xi}$ solves
\begin{equation}\label{eqw}
-\partial_tw-\sigma\Delta w+(-\Delta)^sw+Dv\cdot D^2_{pp}H(x,Du)Dv+D_pH(x,Du)\cdot Dw+
\end{equation}
\begin{equation*}
+2D_{p\xi}^2H(x,Du)\cdot Dv+D_{\xi\xi}^2H(x,Du)=V_{\xi\xi}\ ,\quad w(x,0)=u_{\xi\xi}(0)\ .
\end{equation*}
Then, multiply \eqref{eqw} by the adjoint variable $\rho$ satisfying \eqref{adjo} and integrate over $\T^d\times[\tau,T]$ to get
\begin{equation*}
\int_{\T^d}w(x,\tau)\rho_{\tau}(x)\,dx+\int_{\tau}^T\int_{\T^d}Dv\cdot D^2_{pp}H(x,Du)Dv\rho\, dxdt=\int_{\T^d}w(x,T)\rho(x,T)\,dx-
\end{equation*}
\begin{equation*}
-2\int_{\tau}^T\int_{\T^d}D_{p\xi}^2H(x,Du)\cdot Dv\rho\, dxdt-\int_{\tau}^T\int_{\T^d}D_{\xi\xi}^2H(x,Du)\rho\, dxdt+\int_{\tau}^T\int_{\T^d}V_{\xi\xi}\rho\, dxdt\ .
\end{equation*}
On one hand, by (H5) we have
\begin{equation*}
\int_{\tau}^T\int_{\T^d}Dv\cdot D^2_{pp}H(x,Du)Dv\rho\, dxdt\geq C_1\int_{\tau}^T\int_{\T^d}|Du|^{\gamma-2}|Dv|^2\rho\,dxdt-\tilde{C}_1\int_{\tau}^T\int_{\T^d}\rho\,dxdt
\end{equation*}
and hence, using also (H3)-(H4), we conclude
\begin{multline*}
\int_{\T^d}w(x,\tau)\rho_{\tau}(x)dx+C_1\int_{\tau}^T\int_{\T^d}|Du|^{\gamma-2}|Dv|^2 \rho \, dxdt-\tilde{C}_1\int_{\tau}^T\int_{\T^d}\rho\,dxdt  \\ \leq \int_{\T^d}w(x,T)\rho(x,T)dx+C_2\int_{\tau}^T\int_{\T^d}|Du|^{\gamma-1}|Dv|\rho dxdt+C_3\int_{\tau}^T\int_{\T^d}|Du|^{\gamma}\rho\, dxdt \\ +(\tilde{C}_2+\tilde{C}_3)\int_{\tau}^T\int_{\T^d}\rho\,dxdt+\int_{\tau}^T\int_{\T^d}V_{\xi\xi}\rho\, dxdt.
\end{multline*}
Now, we apply Young's inequality to the second term on the right-hand side of the above inequality to get
\begin{equation*}
\int_{\tau}^T\int_{\T^d}|Du|^{\gamma-1}|Dv|\rho\,dxdt\leq \frac{\epsilon^2}{2}\int_{\tau}^T\int_{\T^d}|Du|^{\gamma-2}|Dv|^2\rho dxdt+\frac{1}{\epsilon^2}\int_{\tau}^T\int_{\T^d}|Du|^{\gamma}\rho dxdt.
\end{equation*}
Taking $\epsilon$ so that $C_1=\frac{\epsilon^2}{2}$ we finally obtain the estimate
\begin{multline*}
\int_{\T^d}w(x,\tau)\rho_{\tau}(x)dx\leq  \int_{\T^d}w(x,T)\rho(x,T)dx+\left(\frac{1}{2C_1}+C_3\right)\int_{\tau}^T\int_{\T^d}|Du|^{\gamma}\rho\, dxdt+\\
+\int_{\tau}^T\int_{\T^d}V_{\xi\xi}\rho\,dxdt+\tilde{C}_4\ .
\end{multline*}
During the above computations $C_i=C_i(C_H)$. By Lemma \ref{reprsemic} we finally deduce the desired semiconcavity estimate after passing to the supremum over $\rho_{\tau}$.
\end{proof}
\begin{rem}\label{fracdiff}
The viscosity parameter $\sigma$ does not play any role in the above proof, and hence if $u$ is sufficiently regular to perform a differentiation procedure in the classical sense, the above scheme can be carried out with merely fractional diffusion of any order 
 $s\in(0,1)$. 
\end{rem}

We now turn to space-time H\"older bounds for (forward) fractional HJB equations with bounded right hand side. These will be useful in the vanishing viscosity limit to have uniform convergence of solutions, and therefore to bring to the limit the viscosity notion.
\begin{prop}\label{hc}
Let $f\in L^{\infty}(Q_T)$ and $u$ be a classical solution to
\[
\begin{cases}
\partial_tu-\sigma\Delta u+(-\Delta)^su=f(x,t) & \text{ in }Q_T \\
u(x,0)=u_0(x)&\text{ in }\T^d.
\end{cases}
\]
with $u_0\in C^1(\T^d)$. Then
\begin{equation}\label{holder}
\|u\|_{\mathcal{C}^{\alpha,\beta}(Q_T)}\leq C
\end{equation}
for some $\alpha,\beta\in(0,1)$, where the constant $C$ depends only on $\|f\|_{L^\infty(Q_T)}, \|u_0\|_{C^1(\T^d)}$ and is independent of $\sigma$.
\end{prop}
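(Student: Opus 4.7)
The plan is to represent $u$ via Duhamel's formula for the semigroup $\mathcal{T}_t^\sigma$ generated by $L_\sigma = -\sigma\Delta+(-\Delta)^s$, and to derive $\sigma$-uniform decay estimates using the fact that its fundamental solution $p_t^\sigma$ factorizes on $\R^d$ as the convolution $p_t^{\mathrm{heat},\sigma} \star p_t^{\mathrm{frac}}$ (and then on $\T^d$ via periodization as in \eqref{repre2}), both factors being nonnegative probability densities. The $L^\infty$-bound is immediate from the parabolic comparison principle (cf.\ \cite[Section II.2]{GMparabolic}): $\|u\|_{\infty;Q_T} \le \|u_0\|_\infty + T\|f\|_\infty$, uniformly in $\sigma$.

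By Young's inequality together with the scaling estimate \eqref{TT2} applied only to the fractional factor, one obtains the $\sigma$-uniform bound $\|D^k p_t^\sigma\|_{L^1} \le Ct^{-k/2s}$ for $k=0,1$. Interpolating the resulting estimates $\|\mathcal{T}_t^\sigma g\|_\infty \le \|g\|_\infty$ and $\|D\mathcal{T}_t^\sigma g\|_\infty \le Ct^{-1/2s}\|g\|_\infty$ via the elementary inequality $\min(A, B|x-y|) \le A^{1-\alpha}B^\alpha|x-y|^\alpha$ yields, for any $\alpha \in (0,1)$,
\[
[\mathcal{T}_t^\sigma g]_{C^\alpha(\T^d)} \le Ct^{-\alpha/2s}\|g\|_\infty.
\]
Fix $\alpha \in (0, \min(1, 2s))$. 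Applying Duhamel, $u(t) = \mathcal{T}_t^\sigma u_0 + \int_0^t \mathcal{T}_{t-r}^\sigma f(r)\, dr$, and using that the contraction property gives $[\mathcal{T}_t^\sigma u_0]_{C^\alpha} \le \|u_0\|_{C^1}$,
\[
[u(t)]_{C^\alpha} \le \|u_0\|_{C^1} + C\|f\|_\infty \int_0^t (t-r)^{-\alpha/2s}\, dr \le C,
\]
since $\alpha/2s < 1$. This is the uniform spatial H\"older regularity.

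For time regularity, write $u(t) - u(\tau) = (\mathcal{T}_h^\sigma - I) u(\tau) + \int_\tau^t \mathcal{T}_{t-r}^\sigma f(r)\, dr$ with $h=t-\tau$. The second term is trivially $O(h)$. For the first, decompose
\[
\mathcal{T}_h^\sigma - I = \mathcal{T}_h^{\mathrm{heat},\sigma}(\mathcal{T}_h^{\mathrm{frac}} - I) + (\mathcal{T}_h^{\mathrm{heat},\sigma} - I),
\]
and estimate each piece via the convolution representation: for $g=u(\tau)$,
\[
\|(\mathcal{T}_h^{\mathrm{frac}} - I) g\|_\infty \le [g]_{C^\alpha}\int p_h^{\mathrm{frac}}(z)|z|^\alpha\,dz \le C[g]_{C^\alpha} h^{\alpha/2s},
\]
with the integral finite precisely because $\alpha<2s$ (moment bound for $2s$-stable densities), and analogously $\|(\mathcal{T}_h^{\mathrm{heat},\sigma} - I) g\|_\infty \le C[g]_{C^\alpha}(\sigma h)^{\alpha/2}$. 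For $\sigma$ in a bounded range (the regime of the vanishing viscosity limit) both contributions are dominated by $Ch^{\alpha/2}$, which combined with the uniform spatial bound on $[u(\tau)]_{C^\alpha}$ gives $\|u(t)-u(\tau)\|_\infty \le Ch^{\alpha/2}$. This proves \eqref{holder} with $\alpha \in (0, \min(1, 2s))$ and $\beta = \alpha/2$.

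The main obstacle is to secure the $\sigma$-independence of the constants throughout: this rests entirely on the factorization $p_t^\sigma = p_t^{\mathrm{heat},\sigma} \star p_t^{\mathrm{frac}}$, since $p_t^{\mathrm{heat},\sigma}$ being a probability density, Young's inequality makes it a neutral $L^1$-factor in every derivative estimate, and thus all regularity is inherited from the purely fractional kernel. A naive parabolic regularity approach based on the full generator $L_\sigma$ would instead yield constants blowing up as $\sigma\to 0$ in the opposite regime, and would not be directly usable for the passage to the limit in Theorem \ref{vanishingviscosity}.
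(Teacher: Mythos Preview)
Your argument is correct and more direct than the paper's. Both proofs rest on the same key observation---the factorization $\bar{\mathcal{T}}_t = e^{t\sigma\Delta}\circ e^{-t(-\Delta)^s}$ of commuting semigroups, with the heat factor acting as an $L^1$-contraction---to obtain $\sigma$-uniform decay. From there the routes diverge: the paper estimates $u$ in the parabolic Bessel space $\mathcal{H}^s_p(Q_T)$ via Duhamel and Lemma~\ref{decaytorus}, then invokes the embedding Theorem~\ref{Embedding} (for $p$ large) to descend to $\mathcal{C}^{\alpha,\beta}$; you work entirely at the $L^\infty/C^\alpha$ level, using elementary interpolation for the spatial estimate and a direct moment bound $\int p_h^{\mathrm{frac}}(z)|z|^\alpha\,dz\le Ch^{\alpha/2s}$ (valid precisely for $\alpha<2s$) on the stable kernel for the time increment. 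Your approach is self-contained and avoids the $\mathcal{H}^\mu_p$ machinery altogether; the paper's approach, by contrast, showcases the functional framework it has developed and yields membership in $\mathcal{H}^s_p$ as a byproduct. Note that your time exponent $\beta=\alpha/2$ is governed by the heat part and requires $\sigma$ to range in a bounded set, a restriction you acknowledge and which is harmless for the vanishing-viscosity application; the paper's argument has the same implicit restriction, since its ``$\sigma$-independent'' constant in \eqref{bobo} likewise relies on the heat semigroup being a contraction rather than providing any gain.
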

\begin{rem}
To prove the above result, we need to show the counterpart of Lemma \ref{decaytorus} for the semigroup $\bar{\mathcal{T}}_t$ generated by the full operator $\sigma\Delta-(-\Delta)^s$. We point out that the two semigroups $e^{-t(-\Delta)^s}$ and $e^{t\sigma\Delta}$ commute, and therefore
\[
\bar{\mathcal{T}}_t=e^{-t(-\Delta)^s}(e^{t\sigma\Delta}).
\]
\end{rem}
\begin{proof}[Proof of Proposition \ref{hc}]
We observe that by Lemma \ref{decaytorus}-(i) and \eqref{claim2}, it is straightforward to see that, for $\nu\in\R$, $p > 1$ and $\gamma\geq0$ we have
\begin{equation}\label{bobo}
\|\bar{\mathcal{T}}_tf\|_{\nu+\gamma,p}\leq Ct^{-\gamma/2s}\norm{g}_{\nu,p}\ .
\end{equation}
Note that $C$ does not depend on $\sigma$ here.

Write $u$ using Duhamel's formula, that is $u(t) = u_1(t) + u_2(t)$, where
\begin{equation*}
u_1(t)=\bar {\mathcal{T}}_t u_0, \qquad u_2(t) = \int_{0}^t\bar{\mathcal{T}}_{t-\tau}f(\tau)d\tau.
\end{equation*}
The estimate of $u_1(t):=\bar{\mathcal{T}}_tu_0$ follows using the same argument as in Theorem \ref{fracregtor} and the estimates in Lemma \ref{decaytorus}. We focus on $u_2(t)=\int_0^t\bar{\mathcal{T}}_{t-\tau}f(\tau)d\tau$. Take $\nu = 0$, $\gamma = \frac{s}{p}$ in \eqref{bobo} to get
\[
\|\bar{\mathcal{T}}_{t-\tau}f\|_{s,p}^p\leq C(t-\tau)^{-1/2}\|f\|_{L^\infty(Q_T)}^p\ .
\]
Therefore
\[
\|u_2\|_{\mathbb{H}_p^s(Q_T)}= \left(\int_0^T\norm{u_2(t)}_{s,p}^p\right)^{\frac1p}\leq CT^{\frac{3}{2p}}\|f\|_{L^{\infty}(Q_T)}
\]
Since $u_2$ solves $\partial_t u_2+(-\Delta)^s u_2=f$, one has
\[
\int_0^T\norm{\partial_tu_2(t)}^p_{-s,p}dt\leq C_1\left(\int_0^T\|(-\Delta)^su_2\|_{-s,p}^p+\|f\|_{-s,p}^pdt\right)\leq C_2\|f\|_{L^{\infty}(Q_T)}\ ,
\]
yielding the full estimate
\[
\norm{u}_{\H_p^{s}(Q_T)}\leq C(\norm{f}_{L^\infty(Q_T)}+\norm{u_0}_{s-2s/p+\epsilon})
\]
for $\eps<\frac{2s}{p}$. Then, for $p>\frac{d+2s}{s}$, by Sobolev embedding theorems in Proposition \ref{Embedding} we conclude
\[
\norm{u}_{\mathcal{C}^{\alpha,\beta}(Q_T)} \leq C\norm{u}_{\H_p^{s}(Q_T)} \le C_1\ . 
\]
\end{proof}
\subsubsection{Existence of solutions}\label{SecExHJB}
In this section we prove an existence result for backward integro-differential HJB equations of the form
\begin{equation}\label{HJ}
\begin{cases}
-\partial_tu-\Delta u+(-\Delta)^su+ H(x,Du)=V(x,t) &\text{ on }Q_T\ ,\\
u(x,T)=u_T(x) &\text{ on }\T^d\ .
\end{cases}
\end{equation}

\begin{prop}\label{existenceHJ}
Let $V\in C^{2+\alpha,1+\alpha/2}(Q_T)$, $H$ satisfying \eqref{H1}-\eqref{H5} and $u_T\in C^{4+\alpha}(\T^d)$. Then, there exists a unique solution $u\in C^{4+\alpha,2+\alpha/2}(Q_T)$ to \eqref{HJ}, and the following estimate holds
\begin{equation}\label{Lib}
\norm{u}_{C^{4+\alpha,2+\alpha/2}(Q_T)}\leq C(\norm{V}_{C^{2+\alpha,1+\alpha/2}(Q_T)}+\norm{u_T}_{C^{4+\alpha}(\T^d)})\ .
\end{equation}
\end{prop}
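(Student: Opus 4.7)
The plan is to obtain existence via Schauder's fixed point theorem on a compact convex subset of Hölder functions after truncating the Hamiltonian to deal with its superlinear growth, and then to bootstrap regularity by iterating linear parabolic Schauder estimates for the mixed local/nonlocal operator $-\partial_t-\Delta+(-\Delta)^s$. Uniqueness will follow by linearizing the difference of two solutions and applying a maximum principle.

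First I would time-reverse to a forward problem and truncate the Hamiltonian: for $R>0$ pick a smooth cutoff $\psi_R:\R^d\to\R^d$ equal to the identity on $\{|p|\le R\}$ and bounded otherwise, and consider $-\partial_t u-\Delta u+(-\Delta)^s u+H(x,\psi_R(Du))=V$. On the Banach space $X=C^{1+\alpha',(1+\alpha')/2}(Q_T)$ with $\alpha'\in(0,\alpha)$, define $\mathcal{T}:w\mapsto u$ by taking $u$ to solve the linear problem
\[
-\partial_t u-\Delta u+(-\Delta)^s u=V-H(x,\psi_R(Dw)),\quad u(\cdot,T)=u_T.
\]
Since $w\in X$ yields $H(x,\psi_R(Dw))\in C^{\alpha',\alpha'/2}(Q_T)$, linear parabolic Schauder theory for the mixed operator (the fractional part being of order $2s<2$ and thus a lower-order perturbation, treated via Lemma \ref{LpestimatesT} and the appendix estimates) produces a unique $u\in C^{2+\alpha',1+\alpha'/2}(Q_T)$ with norm controlled by the data. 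Compactness of the embedding $C^{2+\alpha',1+\alpha'/2}\hookrightarrow X$ makes $\mathcal{T}$ compact and continuous, and fixes points exist on any sufficiently large ball of $X$.

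For a fixed point $u$ of $\mathcal{T}$ one derives bounds independent of $R$: comparison against $\|u_T\|_\infty+(T-t)\|V\|_\infty$ (using $H(x,0)=0$) yields $\|u\|_\infty\le C(K)$; Proposition \ref{semiconcav}, whose hypotheses are stable under the truncation, gives $D^2 u\le C(K)\,I$; and on the torus semiconcavity together with the $L^\infty$ bound yields $\|Du\|_\infty\le C(K)$. Choosing $R>C(K)$ makes $\psi_R$ inactive, so the fixed point solves the original equation. Higher regularity is then obtained by iteration: with $Du\in L^\infty$ one has $H(\cdot,Du)\in L^\infty$, so $u\in W^{2,1}_p(Q_T)$ for every $p>1$ by the linear $L^p$-theory; Sobolev embedding gives $Du\in \mathcal{C}^{\alpha'',\alpha''/2}$ for some $\alpha''>0$, whence $H(x,Du)\in \mathcal{C}^{\alpha'',\alpha''/2}$ and Schauder yields $u\in C^{2+\alpha'',1+\alpha''/2}$; differentiating the equation in $x$ and repeating lifts regularity to $C^{4+\alpha,2+\alpha/2}$ with the estimate \eqref{Lib}. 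For uniqueness, the difference $w=u_1-u_2$ of two solutions satisfies
\[
-\partial_t w-\Delta w+(-\Delta)^s w+b(x,t)\cdot Dw=0,\qquad b(x,t)=\int_0^1 D_pH(x,Du_2+\theta Dw)\,d\theta,
\]
with $b\in L^\infty$ and $w(\cdot,T)=0$, and the standard comparison principle for such linear integro-differential operators forces $w\equiv 0$.

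The main obstacle I expect is the linear Schauder step for the mixed operator: although $(-\Delta)^s$ is formally lower order than $-\Delta$, executing the Hölder regularity bound rigorously requires either direct kernel/semigroup estimates for $\bar{\mathcal{T}}_t=e^{t(\sigma\Delta-(-\Delta)^s)}$ (as sketched in Proposition \ref{hc}) or an iteration that absorbs the fractional term via interpolation. Assuming the appendix provides Schauder regularity for the nonhomogeneous fractional heat equation, this can be combined with classical $-\Delta$-Schauder theory by a perturbation argument; otherwise one must prove the combined estimate ad hoc, which is the most technical piece of the argument.
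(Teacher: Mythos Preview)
Your overall architecture (fixed point for existence, semiconcavity for the a priori gradient bound, linear bootstrap for regularity, linearization for uniqueness) is the right one, but the paper organizes the existence step differently, and the difference matters for one point where your argument has a gap.

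The paper does \emph{not} truncate $H$. Instead it runs a contraction mapping on $W^{2,1}_p(Q_\tau)$, $p>d+2$, for a short time interval $(T-\tau,T]$: given $z$, solve the \emph{linear} heat equation $-\partial_t w-\Delta w=V-H(x,Dz)-(-\Delta)^s z$ with $w(T)=u_T$. The fractional term and the Hamiltonian term are both thrown to the right-hand side; smallness comes from the factor $\tau^{1/2p}$ picked up in the $L^p$ norms, together with the interpolation estimate $\|(-\Delta)^s z\|_{L^p}\le \delta\|z\|_{W^{2,1}_p}+C(\delta)\|z\|_{L^p}$ from Lemma \ref{LpestimatesT}. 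This gives a local classical solution of the \emph{original} equation, which is then bootstrapped to $C^{4+\alpha,2+\alpha/2}$ on $(T-\tau,T]$. Now Proposition \ref{semiconcav} applies verbatim and yields a Lipschitz bound depending only on $K$; a standard continuation argument pushes the solution to all of $[0,T]$.

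The gap in your route is the sentence ``Proposition \ref{semiconcav}, whose hypotheses are stable under the truncation, gives $D^2u\le C(K)\,I$.'' This is exactly the delicate point. With $\tilde H(x,p)=H(x,\psi_R(p))$, hypothesis \eqref{H1} fails for $|p|>R$ (the left side stays bounded while the right grows like $|p|^\gamma$), so Lemma \ref{reprsemic} no longer controls $\int|Du|^\gamma\rho$. Meanwhile \eqref{H3}--\eqref{H4} for $\tilde H$ hold only with $R$-dependent constants, and \eqref{H5} need not survive composition with $D\psi_R$. Running the adjoint-method proof of Proposition \ref{semiconcav} for $\tilde H$ therefore yields a semiconcavity constant that depends on $R$, which defeats the purpose of removing the truncation. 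One can try to repair this (e.g.\ a continuity argument in $R$, or a clever choice of extension of $H$ outside $|p|\le R$ preserving \eqref{H1}--\eqref{H5} with uniform constants), but it is nontrivial and you have not indicated how. The paper's local-contraction-plus-continuation scheme sidesteps the issue entirely, since the solution one differentiates always satisfies the untruncated equation.

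Your concern about Schauder theory for the mixed operator $-\partial_t-\Delta+(-\Delta)^s$ is, by contrast, not the real obstacle: in both the paper's and your scheme the fractional term is genuinely lower order and is absorbed via Lemma \ref{LpestimatesT}, and one only ever invokes classical $-\Delta$-Schauder or $L^p$ theory on the linear problem.
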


The crucial step to obtain this existence result are the semiconcavity estimates of the previous section, that yield a priori gradient bounds of solutions. Then, the construction of a solution follows by standard arguments. Since we were not able to find a similar result in the literature, we detail the proof here for the convenience of the reader.

\begin{proof}
Step 1: Local existence on $Q_\tau = \T^d \times (T-\tau, T)$ . Let $\tau\leq1$ and
\[
\mathcal{S}_a:=\left\{u\in W^{2,1}_p(Q_\tau):u(T)=u_T\ ,\|u\|_{W^{2,1}_p(Q_\tau)}\leq a\ ,p>d+2\right\}
\]
be the space on which we apply the contraction mapping principle. The parameter $a$ will be chosen large enough. Fix $z\in W^{2,1}_p(Q_\tau)$, $p>d+2$ and let $w = Jz$ be the solution of the problem
\begin{equation}\label{frozen}
\begin{cases}
-\partial_tw-\Delta w=V-H(x,Dz)-(-\Delta)^sz&\text{ in }\T^d\times(T-\tau,T]\ ,\\
w(x,T)=u_T(x)&\text{ in }\T^d.
\end{cases}
\end{equation}
By standard (local) parabolic regularity theory (see \cite[Theorem IV.9.1]{LSU} or \cite{CGM}), since the right hand side of the equation in \eqref{frozen} is in $L^p(Q_\tau)$, \eqref{frozen} admits a unique solution $w\in W^{2,1}_p(Q_{\tau})$ satisfying the following estimate
\[
\|w\|_{W^{2,1}_p(Q_\tau)}\leq C(\|V\|_{L^p(Q_\tau)}+\|H(x,Dz)\|_{L^p(Q_\tau)}+\|(-\Delta)^sz\|_{L^p(Q_\tau)}+\|u_T\|_{W^{2-2/p,p}(\T^d)}).
\]
We show that we can choose $\tau\in(0,T]$ sufficiently small so that $\|Jz\|_{W^{2,1}_p(Q_\tau)}\leq a$. By \cite[Lemma 2.4]{CGM}
\[
\|H(x,Dz)\|_{L^p(Q_\tau)}\leq C_1\tau^{\frac{1}{2p}}\|H(x,Dz)\|_{L^{2p}(Q_{\tau})}\leq C_2\tau^{\frac{1}{2p}}\|Dz\|^{\gamma}_{\infty;Q_{\tau}}
\]
Moreover, by \cite[Proposition 2.5]{CGM} we have
\[
\|Dz\|_{\infty;Q_\tau}\leq C_3(\|z\|_{W^{2,1}_p(Q_\tau)}+\|u_T\|_{W^{2-2/p,p}(\T^d)})\ ,
\]
which gives
\[
\|H(x,Dz)\|_{L^p(Q_\tau)}\leq C_4\tau^{\frac{1}{2p}}(\|z\|_{W^{2,1}_p(Q_\tau)}^\gamma +\|u_T\|_{W^{2-2/p,p}(\T^d)}^\gamma)\ .
\]
Concerning the fractional term we observe that if either $s\in(0,\frac12)$ or $s\in\left[\frac12,1\right)$, then by Lemma \ref{LpestimatesT} we get for some $\delta>0$
\begin{equation*}
\norm{(-\Delta)^sz}_{L^p(Q_\tau)}\leq \delta\norm{z}_{W^{2,1}_p(Q_\tau)}+C(\delta)\norm{z}_{L^p(Q_\tau)}
\end{equation*}
where $C(\delta)>0$ grows as $\delta$ approaches to 0. Then, note that by writing
\[
z(\cdot,s)=u_T(\cdot)-\int_s^T\partial_{t}z(\cdot,\omega)d\omega\ ,
\]
we obtain
\[
\|z\|_{L^p(Q_{\tau})}\leq \tau^{\frac1p}\|u_T\|_{L^p(\T^d)}+\tau\|\partial_tz\|_{L^p(Q_\tau)}\ .
\]
Then
\begin{multline*}
\|w\|_{W^{2,1}_p(Q_\tau)}\leq C\left[\max\{\|z\|_{W^{2,1}_p(Q_\tau)},\|z\|^{\gamma}_{W^{2,1}_p(Q_\tau)}\}(\tau^{\frac{1}{2p}}+C(\delta)\tau+\delta)\right.\\
\left.+(\tau^{\frac1p}+\tau^{\frac{1}{2p}})\max\{\|u_T\|_{L^p(\T^d)},\|u_T\|^{\gamma}_{W^{2-2/p,p}(\T^d)}\}+\|V\|_{L^p(Q_\tau)}\right] \\
\leq C\left[\max\{\|z\|_{W^{2,1}_p(Q_\tau)},\|z\|^{\gamma}_{W^{2,1}_p(Q_\tau)}\}(\tau^{\frac{1}{2p}}(1+C(\delta))+\delta)\right.\\
\left.+2\tau^{\frac{1}{2p}}\max\{\|u_T\|_{L^p(\T^d)},\|u_T\|^{\gamma}_{W^{2-2/p,p}(\T^d)}\}+\|V\|_{L^p(Q_\tau)}\right]\ .
\end{multline*}
At this stage, take
\[
a\geq C\left(2\max\{\|u_T\|_{L^p(\T^d)},\|u_T\|^{\gamma}_{W^{2-2/p,p}(\T^d)}\}+\|V\|_{L^p(Q_\tau)}\right)+2
\]
to get
\[
\|w\|_{W^{2,1}_p(Q_\tau)}\leq C\left\{\max\{\|z\|_{W^{2,1}_p(Q_\tau)},\|z\|^{\gamma}_{W^{2,1}_p(Q_\tau)}\}\left[(1+C(\delta))\tau^{\frac{1}{2p}}+\delta\right]\right\}+a-2\ .
\]
Then, choose $\delta\leq\frac{1}{Ca}$ so that
\[
\|w\|_{W^{2,1}_p(Q_\tau)}\leq C\max\{\|z\|_{W^{2,1}_p(Q_\tau)},\|z\|^{\gamma}_{W^{2,1}_p(Q_\tau)}\}(1+C(\delta))\tau^{\frac{1}{2p}}+a-1
\]
and finally $\tau$ small to conclude
\[
\|w\|_{W^{2,1}_p(Q_\tau)}\leq a\ .
\]
This shows that $J$ maps $\mathcal{S}_a$ into itself.

To prove that $J$ is a contraction, one has to argue as above, exploiting also the fact that
for bounded $z\in W^{2,1}_p(Q_T)$, $p>d+2$, then $Dz$ is bounded in $L^{\infty}(Q_T)$. So,
\begin{equation*}
\norm{H(x,Dz_1)-H(x,Dz_2)}_{L^{p}(Q_T)}\leq C\norm{z_1-z_2}_{L^p(Q_T)}\leq CT\norm{\partial_t(z_1-z_2)}_{L^p(Q_T)}
\end{equation*}
for some positive constant $C$. Therefore, one obtains, for small $\tau$,
\begin{equation*}
\norm{Jz_1-Jz_2}_{W^{2,1}_p(Q_{\bar{T}})}\leq\frac12\norm{z_1-z_2}_{W^{2,1}_p(Q_{\bar{T}})},
\end{equation*}
which ensures the existence of a unique fixed point, $z = Jz$, i.e. a solution $z$ of the HJB equation in the interval $(T-\tau, T]$.\\
Now note that by Sobolev embedding, if $p>d+2$, then $u\in C^{1+\alpha,\frac{1+\alpha}{2}}(Q_\tau)$. Then a bootstrap argument allows to conclude $u\in C^{4+\alpha,2+\alpha/2}(Q_\tau)$, since $V\in C^{2+\alpha,1+\alpha/2}(Q_\tau)$.\\
\par\smallskip
\textit{Step 2}. Define
\begin{equation*}
T^*:=\inf\{\tau\in[0,T]:\eqref{HJ} \text{ admits a solution $C^{4+\alpha,2+\alpha/2}(Q_{\tau})$}\}
\end{equation*}
In view of Step 1 we claim that the above set is nonempty. We want to show that $T^* \le 0$. To this aim, take a sequence $\{(\tau_k,u_k)\}$ in $(T^*,T)\times W^{2,1}_p(\overline{Q}_{\tau_k})$, where $\tau_k$ converges decreasingly to $T^*$ and $u_k$ solves \eqref{HJ} in $\overline{Q}_{\tau_k}$. Since, by Sobolev Embedding, $u_k \in C^{4+\alpha,2+\alpha/2}(Q_{\tau_k})$, we have that $u_k$ is semiconcave independently on $k$. Being also bounded by the Comparison Principle for classical solutions of integro-differential uniformly parabolic equations (see \cite[Corollary II.2.18]{GMparabolic}, there exists $C>0$ such that
\begin{equation*}
\norm{Du_k}_{L^{\infty}(Q_{\tau})}\leq C \qquad \forall k\in\N
\end{equation*}
(see \cite[Remark 2.1.8]{CS}). Arguing as in Step 1, by \cite[Theorem IV.9.1]{LSU} we claim that $u_k$ satisfies
\begin{equation}\label{1}
\norm{u_k}_{W^{2,1}_p(Q_{\tau})}\leq C.
\end{equation}
In particular the solution turns out to be classical by bootstrapping and \cite[Theorem II.3.1]{GMparabolic}. Again by the Comparison Principle, we also have
\begin{equation}
\label{2}
u_k=u_h\text{ on }\overline{Q}_{\tau_h}\text{ for every }k\geq h\ .
\end{equation}
We define a function $u:\T^d\times[T^*,T]\rightarrow\R$ by setting $u=u_k$ on $\overline{Q}_{\tau_k}$ for every $k\in\N$ and then by taking its continuous extension to $\T^d\times[T^*,T]$. Moreover, it solves the Cauchy problem on $\T^d\times[T^*,T]$ by continuity of $u,\partial_tu,Du,D^2u$ (using the results for parabolic H\"older spaces, since, as claimed above, at the end $u$ has classical regularity). If, by contradiction, $T^*>0$, one argues as in Step 1 to find $w\in W^{2,1}_p(Q_{\tau})$ which solves
\begin{equation*}
-\partial_t w-\Delta w+(-\Delta)^sw+H(x,Dw)=V\text{ on }Q_{\tau}, \qquad w(\cdot,T)=u(\cdot,T^*)\text{ on }\T^d
\end{equation*}
(basically one applies the local existence to the backward equation with datum in $T^*$) which at the end will have $C^{4+\alpha,2+\alpha/2}$ regularity. One can check that
\begin{equation*}
u^*(x,t)=\begin{cases}
u(x,t)&\text{ if }(x,t)\in \T^d\times[T^*,T]\ ,\\
w(x,T+t-T^*)&\text{ if }(x,t)\in\T^d\times[T^*-\tau,T^*]
\end{cases}
\end{equation*}
belongs to $C^{4+\alpha,2+\alpha/2}(\T^d\times[T^*-\tau,T])$ and solves the problem on $\T^d\times[T^*-\tau,T]$, contradicting the minimality of $T^*$.\\
\end{proof}

\section{Existence for the MFG system}\label{SecMFG}

This section is devoted to the proofs of existence for systems \eqref{fmfg} and \eqref{fmfgv}. We begin by the viscous case, then proceed with the vanishing viscosity procedure.

\subsection{The viscous case}
\begin{proof}[Proof of Theorem \ref{sigmapositivo}]
The statement is a consequence of the Schauder's fixed point theorem (see \cite[Corollary 11.2]{GT}). Let 
\[
\mathcal{X}=C^{1+\alpha/2}([0,T];\mathcal{P}(\T^d))
\]
and
\[
\mathcal{C}=\{m\in\mathcal{X}:\|m\|_{C^{1+\alpha/2}([0,T];\mathcal{P}(\T^d))}\leq \overline C\}.
\]
It is straightforward to see that $\mathcal{C}$ is closed and convex. We construct a map $T:\mathcal{C}\rightarrow \mathcal{C}$ in the following way: given $\mu\in \mathcal{C}$, let $u$ be the unique solution to
\begin{equation}\label{HJmfg}
\begin{cases}
-\partial_tu-\sigma\Delta u+(-\Delta)^s u+D_pH(x,Du)=F[\mu(t)](x) & \text{ in }\T^d\times(0,T)\ ,\\
u(x,T)=u_T(x) & \text{ in }\T^d\ .
\end{cases}
\end{equation}
Then we define $m=T(\mu)$ as the solution to the fractional Fokker-Planck equation
\begin{equation}\label{FPmfg}
\begin{cases}
\partial_tm-\sigma\Delta m+(-\Delta)^s m-\dive(mD_pH(x,Du))=0 & \text{ in }\T^d\times(0,T)\ ,\\
m(x,0)=m_0(x) & \text{ in }\T^d\ .
\end{cases}
\end{equation}
We divide the proof in three steps.\\
\par\smallskip
\textit{Step 1. $T$ is well-defined}. To show that the map $T$ is well-defined, first note that, since $\mu\in C^{1+\alpha/2}_t(Q_T)$, by the assumptions on $F$ we have $F[\mu]\in C^{2+\alpha,1+\alpha/2}(Q_T)$; in particular, $F[\mu]$ is bounded in $C^{2+\alpha,1+\alpha/2}(Q_T)$ independently with respect of $\mu$. By Proposition \ref{existenceHJ}, problem \eqref{HJmfg} has a unique classical solution belonging to $C^{4+\alpha,2+\alpha/2}(Q_T)$, and satisfies the a priori estimate
\begin{equation*}
\norm{u}_{C^{4+\alpha,2+\alpha/2}(Q_T)}\leq C_1
\end{equation*}
where $C_1$ in particular depends on $\norm{u_T}_{C^{4+\alpha}(\T^d)}$, but does not depend on $\mu$. Then, we can expand the divergence term of the viscous fractional Fokker-Planck equation as
\begin{equation*}
\partial_tm-\sigma\Delta m+(-\Delta)^s m-D_pH(x,Du)\cdot Dm-m\dive(D_pH(x,Du))=0\ ,
\end{equation*}
which turns out to be a linear equation with parabolic H\"older coefficients in $C^{2+\alpha,1+\alpha/2}(Q_T)$, uniformly with respect to $\mu$. Indeed $\dive(D_pH(x,Du))\in C^{2+\alpha,1+\alpha/2}(Q_T)$ owing to \cite[Remark 8.8.7]{KrylovbookHolder}. This gives that 
\begin{equation}\label{mbou}
\|m\|_{C^{4+\alpha,2+\alpha/2}(Q_T)} \le C_2 
\end{equation} 
by \cite[Theorem II.3.1]{GMparabolic}. In particular, the map $T$ is well-defined from $\mathcal{C}$ into itself by choosing $\overline C$ above large enough.\\
\par\smallskip
\textit{Step 2. T is continuous}. To this aim, let $\mu_n\in \mathcal{C}$ converging to some $\mu$. Let $(u_n,m_n),(u,m)$ be the corresponding solutions. By the continuity assumption (F1) we conclude that the map $(x,t)\longmapsto F[\mu_n(t)](x)$ uniformly converges to $(x,t)\longmapsto F[\mu(t)](x)$. We can then consider the equation
\begin{equation*}
-\partial_tu_n-\sigma\Delta u_n+(-\Delta)^su_n+H(x,Du_n)=F[\mu_n(t)](x)
\end{equation*}
whose right-hand side $F[\mu_n(t)](x)$ is uniformly bounded in $C^{2+\alpha,1+\alpha/2}(Q_T)$. Then the sequence  $\{u_n\}$ is uniformly bounded in $C^{4+\alpha,2+\alpha/2}(Q_T)$ in view of Proposition \ref{existenceHJ} and thus converges in $C^{4,2}$ to the unique solution $u$ of the HJB equation. As before, the $m_n$ are solutions of a linear equation with H\"older continuous coefficients, providing uniform estimates in $C^{4+\alpha,2+\alpha/2}(Q_T)$ for $\{m_n\}$. Therefore $\{m_n\}$ converges in $C^{4,2}$ to the unique solution $m$ of the Fokker-Planck equation. Note that the convergence holds also in $\mathcal{C}$.
\par\smallskip
\textit{Step 3. $\overline{T(\mathcal{C})}$ is compact.} By bounds \eqref{mbou}, one proves that for every $\mu_n\in \mathcal{C}$, the sequence $m_n = T(\mu_n)$ has a convergent subsequence.
\end{proof}

\subsection{The vanishing viscosity limit}\label{SecVV}

We emphasize that in the limiting procedure $\sigma \to 0$, one passes from classical parabolic $W^{2,1}_p$ regularity to fractional parabolic $\H_p^{2s}(Q_T)$ regularity. The strategy will thus be to pass to the limit in some suitable weak sense, and then recover maximal regularity by means of Theorem \ref{fracregtor}.


\begin{proof}[Proof of Theorem \ref{vanishingviscosity}]
Let $(u_{\sigma},m_{\sigma})$ a the solution of \eqref{fmfgv}. For $\sigma>0$ we know that a solution exists in view of Theorem \ref{sigmapositivo}.
Collecting the results in Proposition \ref{stabFP}, Proposition \ref{semiconcav} and Proposition \ref{hc}, we are able to construct a sequence $\sigma=\{\sigma_n\}\rightarrow0$ such that, if $(u_{\sigma},m_{\sigma})$ is the corresponding solution, we have
\begin{itemize}
\item[(i)] $u_{\sigma}$ converges to $u$ in $C(Q_T)$ as a consequence of the estimate \eqref{holder} and Ascoli-Arzel\'a Theorem. Moreover, one easily has bounds for $u_\sigma$ in $\mathcal{H}^s_2$, so $u_\sigma \to u$ weakly in $\mathcal{H}^s_2$.
\item[(ii)]  The semiconcavity estimates in Proposition \ref{semiconcav} yield $Du_{\sigma}\rightarrow Du$ a.e. in $Q_T$ in view of \cite[Theorem 3.3.3]{CS}. In addition, by \cite[Remark 2.1.8]{CS} they also imply uniform bounds for $Du_{\sigma}$ in $L^{\infty}(Q_T)$, so $Du_{\sigma} \to Du$ in the $L^\infty$-weak-$\ast$ sense.  Finally, $u$ is semiconcave with the same semiconcavity bounds.
\item[(iii)] By (ii) and dominated convergence theorem $Du_{\sigma}\to Du$ in $L^p(Q_T)$ for every finite $p\geq1$.
\item[(iv)] As a consequence of the semiconcavity estimates we have $[\dive(b)]^-\leq C$, where $b=-D_pH(x,Du_{\sigma})$. Indeed
\[
\dive(-D_pH(x,Du_{\sigma}))=-\sum_{i,j}D^2_{p_ix_j}H-\sum_{i,j}D^2_{p_ip_j}H\partial_{x_ix_j}u_{\sigma} \ge -\overline C.
\]
The first term can be controlled by (ii) and (H4). Since $0 \le D^2_pH(x,Du) \le C_1\, I_d$ and $D^2u_{\sigma}\leq CI_d$, we have a control on the second term by a constant independent of $\sigma$.

\item[(v)] In view of the estimate \eqref{minfty}, $m_{\sigma}$ converges to $m\in L^{\infty}(Q_T)$, weakly-$\ast$ in $L^{\infty}$.
\item[(vi)] Proposition \ref{stabFP} ensures that $m_\sigma$, $\partial_tm_\sigma$ are bounded uniformly with respect to $\sigma$ in $\mathbb{H}_2^{s}(Q_T)$ and $\mathbb{H}_2^{-1}(Q_T)$ respectively, so they weakly converge. 
\end{itemize}
In addition, note that $(x,t)\longmapsto F[m_{\sigma}(t)](x)$ uniformly converges to the map $(x,t)\longmapsto F[m(t)](x)$.
We now pass to the limit in the weak formulation of both equations.\\
\par\smallskip
\textit{Step 1. Fokker-Planck Equation}. Multiplying the Fokker-Planck equation by a test function $\varphi\in C^{\infty}(\T^d\times[0,T))$ and integrating over $Q_T$ we get
\begin{multline}\label{intpartslimit}
-\int_{\T^d} m_{\sigma}(x,0)\varphi(x,0)dx-\iint_{Q_T} m_{\sigma}\partial_t\varphi dxdt-\sigma\iint_{Q_T} m_{\sigma}\Delta\varphi dxdt \\+\iint_{Q_T} (-\Delta)^{s/2}m_{\sigma}(-\Delta)^{s/2}\varphi dxdt
+\iint_{Q_T} m_{\sigma} D_pH(x,Du_{\sigma})\cdot D\varphi dxdt=0
\end{multline}

We then let $\sigma\rightarrow0$ to conclude
\begin{multline*}
-\int_{\T^d} m(x,0)\varphi(x,0)dx-\iint_{Q_T} m\partial_t\varphi dxdt+\iint_{Q_T} (-\Delta)^{s/2}m(-\Delta)^{s/2}\varphi dxdt+\\
+\lim_{\sigma \to 0}\iint_{Q_T} m_\sigma D_pH(x,Du_\sigma)\cdot D\varphi dxdt=0\ ,
\end{multline*}
by the convergence of $m_{\sigma}$ stated in (v)-(vi). It remains to prove
\begin{equation*}
\iint_{Q_T} m_{\sigma} D_pH(x,Du_{\sigma})\cdot D\varphi dxdt\rightarrow \iint_{Q_T} m D_pH(x,Du)\cdot D\varphi dxdt\ .
\end{equation*}
We write
\begin{multline*}
\left|\iint_{Q_T} (m_{\sigma} D_pH(x,Du_{\sigma})-m D_pH(x,Du))\cdot D\varphi\, dxdt\right|\leq\\
\leq \iint_{Q_T}\left|m_{\sigma} D_pH(x,Du_{\sigma})-m_{\sigma} D_pH(x,Du)\right| |D\varphi|\, dxdt\\
+\iint_{Q_T}\left|m_{\sigma} D_pH(x,Du)-m D_pH(x,Du)\right||D\varphi|\,dxdt\ .
\end{multline*}
The first term on the right-hand side of the above inequality can be handled using (H2) and (iii)-(v)
\begin{multline*}
\iint_{Q_T}\left|m_{\sigma}(D_pH(x,Du_{\sigma})-D_pH(x,Du))\right||D\varphi|dxdt \\
\leq C \norm{m_{\sigma}}_{L^{\infty}(Q_T)}\norm{D_pH(x,Du_{\sigma})-D_pH(x,Du)}_{L^1(Q_T)} \\
\leq C_1\norm{|Du_{\sigma}|^{\gamma-1}+|Du|^{\gamma-1}}_{L^{p}(Q_T)}\norm{Du_{\sigma}-Du}_{L^q(Q_T)}\leq C_2 \norm{Du_{\sigma}-Du}_{L^q(Q_T)}\ ,
\end{multline*}
where we also applied H\"older's inequality with $p$ conjugate exponent of $q$. Finally,
\begin{equation*}
\iint_{Q_T}(m_{\sigma}-m)D_pH(x,Du)\cdot D\varphi dxdt\rightarrow0
\end{equation*}
in view of the $L^{\infty}$ weak-$\ast$ convergence $m_{\sigma}$ to $m$ and the fact that 
\[
\norm{D_pH(x,Du)}_{L^1(Q_T)}\leq C\norm{Du}^{\gamma-1}_{L^{\gamma-1}(Q_T)}<\infty\ .
\]
\par\smallskip
\textit{Step 2. The HJB equation.} We now pass to the limit in the fractional HJB equation. Multiplying the equation satisfied by $u_{\sigma}$ by a test function $\varphi\in C^{\infty}(\T^d\times(0,T])$ we get
\begin{multline*}
-\iint_{Q_T}\partial_tu_{\sigma}\varphi dxdt-\sigma\iint_{Q_T}\Delta u_{\sigma}\varphi dxdt+\iint_{Q_T}(-\Delta)^{s}u_{\sigma}\varphi dxdt\\
+\iint_{Q_T}H(x,Du_{\sigma})\varphi dxdt=\iint_{Q_T}F[m_{\sigma}(t)]\varphi dxdt
\end{multline*}
We now integrate by parts using Lemma \ref{intparts} to obtain
\begin{multline*}
-\int_{\T^d}u_{\sigma}(x,T)\varphi(x,T)dx+\iint_{Q_T}u_{\sigma}\partial_t\varphi dxdt+\sigma\iint_{Q_T}D u_{\sigma}\cdot D\varphi dxdt\\+\iint_{Q_T}(-\Delta)^{\frac{s}{2}}u_{\sigma}(-\Delta)^{\frac{s}{2}}\varphi dxdt
+\iint_{Q_T}H(x,Du_{\sigma})\varphi dxdt=\iint_{Q_T}F[m_{\sigma}(t)]\varphi dxdt.
\end{multline*}
Now note that (iii) together with Lemma \ref{LpestimatesT} implies also that $(-\Delta)^{\frac{s}{2}}u_{\sigma}\to (-\Delta)^{\frac{s}{2}}u$ in $L^p(Q_T)$. By the regularity assumptions of the coupling $F$, the term on the right-hand side converges to $\iint_{Q_T}F[m(t)]\varphi dxdt$ as $\sigma\rightarrow0$. We only need to prove that
\begin{equation*}
\iint_{Q_T}H(x,Du_{\sigma})\varphi dxdt\rightarrow \iint_{Q_T}H(x,Du)\varphi dxdt
\end{equation*}
as $\sigma\rightarrow0$. To this aim we use again (H2) 
and the convergence of $Du_{\sigma}$ to $Du$ in $L^p$ for every finite $p\geq1$.

\par\smallskip
\textit{Step 3}. Recall that the energy solution $u\in\H_2^s(Q_T)$ of the fractional Hamilton-Jacobi equation is unique. The same is true for the solution of the Fokker-Planck equation in view of Remark \ref{uniqueFP}. Moreover, since $u, m$ are in $L^\infty(Q_T)$, the HJB and Fokker-Planck equations can be considered as fractional heat equations with bounded source terms. Therefore, by Theorem \ref{fracregtor} the solution $u$ belongs a posteriori to $\H_p^{2s}(Q_T)$, while $m$ is of class $\H_p^{2s-1}(Q_T)$ for all $p > 1$. 
\par\smallskip
\textit{Step 4}. Finally, if $s > 1/2$ one can set up a bootstrap procedure to obtain classical regularity. This will be proven in the following Theorem \ref{addreg}.

\end{proof}

\begin{rem}
By uniform convergence of $u_\sigma$ and $F[m_\sigma]$ on $Q_T$ we can also conclude that the limit $u$ solves the HJB equation in \eqref{fmfg} in the viscosity sense.
\end{rem}

\subsection{Classical regularity in the subcritical case $s > 1/2$}

In what follows, we will assume that
\[
\frac12 < s < 1.
\]
We aim at proving that $(u, m)$ previously found in Theorem \ref{vanishingviscosity} solves the MFG system in the classical sense. We stress that for a (linear) bootstrap procedure to be performed, $s$ must be greater than $1/2$, because the Hamiltonian and divergence terms deteriorate the regularity of the unknowns up to one derivative, while the gain realized by the fractional Laplacian is of order $2s$.

\begin{thm}\label{addreg}
Let $s\in(\frac12,1)$ and  $(u, m)$ be a solution to  \eqref{fmfgv} (in the sense of Definitions \ref{defFP} and \ref{weak}). Then $u, m$ both satisfy \eqref{holderb} for some $0 < \bar\alpha < 1$, and  in particular solve \eqref{fmfgv} in the classical sense. Moreover, there exists a constant $C > 0$ depending on the data and remaining bounded for bounded values of $T$ such that
\[
\|m\|_{\infty} + \|Du\|_{\infty} \le C.
\]
\end{thm}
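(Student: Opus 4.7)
The strategy is a two-step bootstrap: first extract H\"older continuity of $m$ and $Du$ from the Bessel-potential regularity already established in the vanishing viscosity limit, then alternately upgrade $u$ via the HJB equation and $m$ via the Fokker-Planck equation using parabolic Schauder-type estimates for the fractional heat operator (the results collected in the appendix). The $L^\infty$ bounds $\|m\|_{\infty}$ and $\|Du\|_{\infty}$ are actually already encoded in items (ii) and (v) of the Proof of Theorem \ref{vanishingviscosity} and survive the limit $\sigma\to 0$, so the real content is the improved H\"older regularity, from which the classical nature of the equations then follows.

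\textbf{Initial H\"older regularity.} Step 3 of the Proof of Theorem \ref{vanishingviscosity} yields $u\in\H_p^{2s}(Q_T)$ and $m\in\H_p^{2s-1}(Q_T)$ for every $p\in(1,\infty)$. Since $s>1/2$, for $p>(d+2s)/(2s-1)$ Proposition \ref{embe3} applied to $m$ gives $m\in\mathcal{C}^{\gamma,\gamma/2s}(Q_T)$ for some $\gamma\in(0,1)$. For $u$, Theorem \ref{Embedding} with $\mu=2s$ (suitable $\beta<s$) combined with the stationary embedding Lemma \ref{inclstat}(ii) provides H\"older continuity of both $u$ and $Du$ in $(x,t)$; again $2s>1$ is what leaves room to spare one full derivative. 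Because $F$ is $\mathbf{d}_1$-Lipschitz into $C^{2+\alpha}(\T^d)$ by (F2), the time-H\"older continuity of $t\mapsto m(\cdot,t)$ (in $L^\infty$, hence in $\mathbf{d}_1$) transfers to $F[m]\in\mathcal{C}^{2+\alpha,\alpha_1}(Q_T)$ for some $\alpha_1>0$.

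\textbf{Schauder bootstrap.} With $Du\in\mathcal{C}^{\alpha_2,\alpha_2/2s}(Q_T)$ and assumptions (H2)--(H3), one has $H(\cdot,Du)\in\mathcal{C}^{\bar\alpha,\bar\alpha/2s}(Q_T)$, so the HJB equation
\[
-\partial_t u+(-\Delta)^s u = F[m]-H(x,Du)
\]
has H\"older right-hand side. Parabolic fractional Schauder regularity then yields $u\in\mathcal{C}^{2s+\bar\alpha,1+\bar\alpha/2s}(Q_T)$; in particular $\partial_t u,(-\Delta)^s u\in\mathcal{C}^{\bar\alpha,\bar\alpha/2s}(Q_T)$. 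The upgraded regularity of $Du$ makes $D_p H(x,Du)$ of class $C^{1,\bar\alpha}$ in $(x,t)$, so rewriting the Fokker-Planck equation as
\[
\partial_t m+(-\Delta)^s m = D_p H(x,Du)\cdot Dm + m\,\dive_x\!\bigl(D_p H(x,Du)\bigr)
\]
and noting that $Dm$ is already H\"older (by applying the same Schauder estimate once to $m$, whose equation has a merely bounded right-hand side in the first pass), the right-hand side lies in $\mathcal{C}^{\bar\alpha,\bar\alpha/2s}(Q_T)$, whence the same Schauder result gives $m\in\mathcal{C}^{2s+\bar\alpha,1+\bar\alpha/2s}(Q_T)$. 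Both equations are then satisfied pointwise.

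\textbf{Main obstacle.} The subtle point is closing the iteration without degrading the exponents: the Hamiltonian and the divergence term each cost one derivative, while the fractional Laplacian $(-\Delta)^s$ recovers $2s$. The condition $2s>1$ is precisely what ensures a strict net gain at every iteration, letting the bootstrap converge to classical regularity after finitely many steps. If $s\leq 1/2$ the gradient/transport terms would dominate and the iteration would not close, consistently with the fact that only an energy notion of solution is available in that regime.
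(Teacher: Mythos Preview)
Your treatment of the HJB equation is essentially correct as far as establishing \eqref{holderb} for $u$ goes: once $Du$ is H\"older in $(x,t)$ (which you correctly extract from $u\in\H_p^{2s}$ via Theorem \ref{Embedding} and Lemma \ref{inclstat}), a single application of Theorem \ref{Regularity} does the job.

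The Fokker--Planck step, however, contains a genuine gap. Your claim that ``$Dm$ is already H\"older by applying the same Schauder estimate once to $m$, whose equation has a merely bounded right-hand side in the first pass'' fails on two counts. First, Theorem \ref{Regularity} requires a H\"older right-hand side, not a bounded one. Second, and more seriously, the right-hand side in the expanded form you write is $D_pH\cdot Dm + m\,\dive(D_pH)$, which involves $Dm$---precisely the object whose regularity you are trying to establish; the argument is circular. In divergence form, $\dive(mD_pH)$ lies a priori only in $\mathbb{H}_p^{-1}$, so Theorem \ref{fracregtor} returns $m\in\H_p^{2s-1}$ and there is no improvement. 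The paper avoids this by iterating in Bessel potential spaces: the Kato--Ponce inequality (Lemma \ref{katoponce}) controls $\|mD_pH\|_{\mu,p}$ through $\|m\|_{\mu,p}$ and $\|D_pH\|_{\mu,p}$, so each pass of Theorem \ref{fracregtor} gains $2s-1>0$ derivatives; after finitely many steps one reaches $m\in\H_p^{4s-\varepsilon}$, which embeds into $C^{\alpha_3}([0,T];C^{1+\alpha_4}(\T^d))$, and only then is Theorem \ref{Regularity} invoked.

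A second gap concerns the quantitative bound $\|m\|_\infty+\|Du\|_\infty\le C$ with $C$ depending only on the data. Invoking items (ii) and (v) of the proof of Theorem \ref{vanishingviscosity} covers only the particular solution produced by the vanishing-viscosity limit, whereas Theorem \ref{addreg} is stated---and later used in Theorem \ref{smallT}---for \emph{any} weak solution. The paper derives the $\|Du\|_\infty$ bound intrinsically, by applying the semiconcavity estimate of Proposition \ref{semiconcav} (see Remark \ref{fracdiff}); this requires enough regularity of $u$ to differentiate the HJB equation twice, which is exactly why the paper pushes the Bessel iteration up to $u\in\H_p^{2+2s}\hookrightarrow C^{\alpha_1}([0,T];C^{3+\alpha_2}(\T^d))$ rather than stopping after a single Schauder step. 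Your one-step Schauder gives at best $u(\cdot,t)\in C^{2s+\bar\alpha}$ in $x$, which need not be $C^2$ when $s$ is close to $1/2$.
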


\begin{proof}[Proof of Theorem \ref{addreg}]
We first observe that since $m \in \mathcal{H}_p^{2s-1}(Q_T)$ for all $p > 1$, by Proposition \ref{embe3} we have that $m$ is bounded in $\mathcal{C}^{\bar\alpha,\frac{\bar\alpha}{2s}}(Q_T)$ for some $0 < \bar\alpha < 1$, by choosing $p$ large enough. Therefore, in view of (F3), $F[m] \in C^{\bar\alpha/2s}([0,T]; C^{2+\alpha}(\T^d))$, that is in turn embedded in $\mathbb{H}^2_p$ for all $p > 1$.

Note that $u$ solves the following equation
\begin{equation*}
-\partial_tu+(-\Delta)^su=G(x,t), \qquad u(x,T) = u_T(x),
\end{equation*}
where $G(x,t):=F[m(t)](x)-H(x,Du(x,t))$, and $Du \in L^{\infty}$. Then, at first glance, $G \in L^p(Q_T)$ for all $p$. This yields $u \in \mathcal{H}^{2s}_p(Q_T)$ by applying Theorem \ref{fracregtor}, and in particular $Du \in \mathbb{H}^{2s-1}_p(Q_T)$. Then $H(x,Du)\in \mathbb{H}^{2s-1-\eps}_p(Q_T)$ by the fractional chain rule in Lemma \ref{chain}, so $G \in \mathbb{H}^{2s-1-\eps}_p(Q_T)$.
 Using that $s>\frac12$ and taking $\eps$ small, we can iterate this procedure until, in a finite number of steps, $G \in \mathbb{H}^{2}_p(Q_T)$, that is the maximal regularity allowed by $F[m] \in \mathbb{H}^2_p(Q_T)$. Another iteration yields  $u \in \mathcal{H}^{2+2s}_p(Q_T)$ for all $p > 1$. Since $2+2s> 3$, we can apply Theorem \ref{Embedding} with $p$ large and $\beta$ close to zero to obtain $u \in C^{\alpha_1}([0,T]; C^{3+\alpha_2}(\T^d))$, for some $0 < \alpha_1, \alpha_2 < 1$, thus $H(x, Du) \in C^{\alpha_1}([0,T]; C^{2+\alpha_2}(\T^d))$. As a consequence, $G \in \mathcal{C}^{\bar\alpha,\frac{\bar\alpha}{2s}}(Q_T)$, possibly for a smaller $\bar\alpha$ than the one appeared at the beginning of the proof. So, Theorem \ref{Regularity} applies, providing the desired regularity for $u$.

Let us now focus on the Fokker-Planck equation. By similar arguments we have that $D_p H(x, Du)$ $\in \mathbb{H}^{1+2s-\eps}_p \cap L^\infty(Q_T)$. Moreover, $m \in \mathcal{H}_p^{2s-1} \cap L^\infty(Q_T)$, so by Lemma \ref{katoponce} we obtain that ${\rm div} (m D_p H(x, Du)) \in \mathbb{H}_p^{2s-2}(Q_T)$. An application of fractional parabolic regularity stated in Theorem \ref{fracregtor} provides $m \in  \mathcal{H}_p^{4s-2}(Q_T)$. We may iterate this procedure until we get $m \in \mathcal{H}_p^{2s+1} \cap L^\infty(Q_T)$, and another time to conclude  {$m \in \mathcal{H}_p^{4s-\eps}(Q_T)$} for all $p > 1$. Since $4s > 2$, we can use Theorem \ref{Embedding} with $p$ large and $\beta$ small to get $m \in C^{\alpha_3}([0,T]; C^{1+\alpha_4}(\T^d))$, for some $0 < \alpha_3, \alpha_4 < 1$. Since we previously obtained $D_p H(x, Du) \in C^{\alpha_1}([0,T]; C^{2+\alpha_2}(\T^d))$, we finally have ${\rm div} (m D_p H(x, Du))  \in \mathcal{C}^{\bar\alpha,\frac{\bar\alpha}{2s}}(Q_T)$, reducing eventually the value of $\bar\alpha$ previously chosen. We deduce the stated regularity for $m$ again from Theorem \ref{Regularity}.

Last, the estimate on the sup-norm of $D u$ on $Q_T$ follows by comparison and semiconcavity bounds. Note that Proposition \ref{semiconcav} applies in view of $C^{\alpha_1}([0,T]; C^{3+\alpha_2}(\T^d))$ regularity of $u$, see in particular Remark \ref{fracdiff}. Analogous bounds  for $m$ are then a direct consequence of Theorems \ref{fracregtor} and \ref{Embedding}.

\end{proof}

\begin{rem} We mention that if $u_T, m_0, H$ and $F$ are smoother, an additional bootstrap procedure yields further regularity of $u, m$, up to $C^\infty$. For the sake of brevity, we omit the details.
\end{rem}

\section{Uniqueness}\label{SecUn}

Here, we prove some uniqueness results in the case $\sigma = 0$, that is for system \eqref{fmfg}. We assume that equations are satisfied in the sense of Definitions \ref{defFP} and \ref{weak}. The case $\sigma > 0$ is easier, since solutions enjoy classical regularity, and the following arguments apply similarly.

\subsection{Uniqueness in the monotone case}
\begin{thm}\label{un1}
Assume that $H$ is convex and the following monotonicity condition holds
\begin{equation*}
\label{condunique}
\int_{\T^d}(F[m_1](x)-F[m_2](x))d(m_1-m_2)(x)>0\ , \quad \forall m_1,m_2\in\mathcal{P}(\T^d)\ ,m_1\neq m_2\ .
\end{equation*}
Then, the solution to \eqref{fmfg} is unique.
\end{thm}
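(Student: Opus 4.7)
The plan is to follow the classical Lasry--Lions duality argument, adapted to the fractional setting via the self-adjointness property of $(-\Delta)^s$ stated in Lemma \ref{intparts}. Suppose by contradiction that $(u_1,m_1)$ and $(u_2,m_2)$ are two solutions of \eqref{fmfg} in the sense of Definitions \ref{defFP} and \ref{weak}. Set $\bar u = u_1 - u_2$ and $\bar m = m_1 - m_2$. The goal is to derive the Lasry--Lions identity
\begin{equation*}
\int_0^T\!\!\int_{\T^d}\!\! \bar m\,(F[m_1]-F[m_2])\,dxdt + \sum_{i=1}^2 \int_0^T\!\!\int_{\T^d}\!\! m_i\bigl[H(x,Du_j)-H(x,Du_i)-D_pH(x,Du_i)\cdot(Du_j-Du_i)\bigr]dxdt = 0,
\end{equation*}
(with $j\neq i$) from which the conclusion follows immediately: convexity of $H$ makes the last two integrals nonnegative, while strict monotonicity of $F$ forces $m_1 = m_2$, and then uniqueness for the HJB equation (Remark \ref{u0l2}) gives $u_1 = u_2$.

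To produce the identity, I would first use $\bar m$ as a test function against the difference of the two HJB equations, and $\bar u$ as a test function against the difference of the two FP equations, then sum the resulting identities. The fractional Laplacian contributions $\int_0^T\!\!\int (-\Delta)^{s/2}\bar u\,(-\Delta)^{s/2}\bar m\,dxdt$ coming from the two equations cancel by Lemma \ref{intparts}. The time derivative contributions combine via integration by parts into boundary terms
\begin{equation*}
\int_{\T^d}\bar u(x,T)\bar m(x,T)\,dx - \int_{\T^d}\bar u(x,0)\bar m(x,0)\,dx,
\end{equation*}
which vanish since $\bar u(T) \equiv 0$ and $\bar m(0)\equiv 0$ by the terminal/initial data. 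The remaining terms, after expanding $\mathrm{div}(m_i D_pH(x,Du_i))$ paired with $\bar u$ (via integration by parts in $x$) and the $H(x,Du_i)$ terms paired with $\bar m$, reorganize exactly into the Lasry--Lions expression above.

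The main obstacle is the rigorous justification of these integrations by parts, since in the low-regularity regime $s\in(0,1/2]$ the functions $\bar u, \bar m$ are not a priori admissible test functions for the equations satisfied by the other component. I would proceed as in Remark \ref{uniqueFP}: by Theorem \ref{vanishingviscosity} we have $u_i \in \mathcal{H}_p^{2s}(Q_T)$ and $m_i \in \mathcal{H}_p^{2s-1}(Q_T)$ for all $p\in(1,\infty)$, with $Du_i \in L^\infty(Q_T)$ and $m_i \in L^\infty(Q_T)$; choosing $p$ large and using the embedding Lemma \ref{inclstat}, one verifies the regularity requirements of Remarks \ref{uniqueFP} and \ref{u0l2}, namely $\bar m \in \mathbb{H}_2^s(Q_T)$ with $\partial_t \bar m \in \mathbb{H}_2^{-1}(Q_T)$, and $\bar u \in \mathbb{H}_2^1(Q_T)$ with $\partial_t \bar u \in \mathbb{H}_2^{-s}(Q_T)$. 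A standard time-space mollification (as in \cite{Bris2008, WeiTian}) applied to $\bar u$ and $\bar m$ then legitimates the integration-by-parts formula and the pairing with the nonlinear terms $H(x,Du_i)$ and $m_i D_pH(x,Du_i)$, whose integrability is controlled by the $L^\infty$ bounds on $Du_i$ and $m_i$. In the classical regime $s > 1/2$ the argument is routine.
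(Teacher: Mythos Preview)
Your argument is correct and follows the same Lasry--Lions duality scheme as the paper. One point to clean up: you invoke Theorem \ref{vanishingviscosity} to obtain $u_i \in \mathcal{H}_p^{2s}$ and $m_i \in \mathcal{H}_p^{2s-1}$, but that theorem concerns the specific solutions produced by the vanishing viscosity limit, whereas uniqueness must hold for \emph{any} pair of weak solutions in the sense of Definitions \ref{defFP} and \ref{weak}. Fortunately the regularity you actually need ($\bar u \in \mathbb{H}_2^1$ with $\partial_t\bar u \in \mathbb{H}_2^{-s}$, and $\bar m \in \mathbb{H}_2^s$ with $\partial_t\bar m \in \mathbb{H}_2^{-1}$) follows directly from those Definitions together with $Du_i \in L^\infty$, exactly as the paper uses; just replace the citation.
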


\begin{proof}
Uniqueness in the monotone case follows from the usual ideas by Lasry-Lions \cite{ll}. One has to be careful that $(u, m)$ is regular enough to run the argument. Let $(u_1,m_1)$ and $(u_2,m_2)$ be two solutions of the MFG system \eqref{fmfg}. Set $v=u_1-u_2$ and $\mu=m_1-m_2$. Then $v$ and $\mu$ satisfy respectively the equations
\begin{equation*}
-\partial_tv+(-\Delta)^s v+H(x,Du_1)-H(x,Du_2)=F[m_1(t)](x)-F[m_2(t)](x)\ ,v(x,T)=0
\end{equation*}
and
\begin{equation*}
\partial_t\mu+(-\Delta)^s\mu-\dive\big(m_1D_pH(x,Du_1)-m_2D_pH(x,Du_2)\big)=0\ ,\mu(x,0)=0\ .
\end{equation*}
We distinguish between the supercritical-critical (namely $s\in(0,1/2)$ and $s=1/2$) case and the subcritical ($s\in(1/2,1)$) one.

\textit{Case 1. The supercritical-critical case}. Recall that $u_i, Du_i, m_i \in L^{\infty}(Q_T)$, so $v, Dv, \mu\in L^{\infty}(Q_T)$. Moreover, $v\in \H_2^s(Q_T)$. Hence, using $\mu\in \mathbb{H}_2^s(Q_T)\cap L^{\infty}(Q_T)$ as a test function in the weak formulation of Definition \ref{weak}, we get
\begin{multline}\label{eq3}
\iint_{Q_T}-\mu\partial_t v+\mu\big(H(x,Du_1)-H(x,Du_2)\big)-\mu\big(F[m_1(t)](x)-F[m_2(t)](x)\big)dxdt+\\
+\iint_{Q_T}(-\Delta)^{\frac{s}{2}}\mu(-\Delta)^{\frac{s}{2}}v\,dxdt=0\ .
\end{multline}
Then, we use $v\in\H_2^s(Q_T) \cap L^\infty(0,T;W^{1,\infty}(\T^d))$ as a test function in the weak formulation of the equation satisfied by $\mu$, recalling also that $\partial_t\mu\in\mathbb{H}_2^{-1}(Q_T)$, to conclude
\begin{multline}\label{eq4}
0=\iint_{Q_T}-\mu \partial_t v\,dxdt+(-\Delta)^{\frac{s}{2}}\mu(-\Delta)^{\frac{s}{2}}v\, dxdt+\\
+\iint_{Q_T} Dv\cdot (m_1D_pH(x,Du_1)-m_2D_pH(x,Du_2))\,dxdt\ ,
\end{multline}
Subtracting \eqref{eq4} from \eqref{eq3} we obtain
\begin{multline}\label{claimuniqueness}
0=\iint_{Q_T}-\mu \big((F[m_1(t)](x)-F[m_2(t)](x)\big)+\mu(H(x,Du_1)-H(x,Du_2))dxdt-\\
-\iint_{Q_T}Dv\cdot \big(m_1D_pH(x,Du_1)-m_2D_pH(x,Du_2)\big)dxdt\ .
\end{multline}
The following inequality holds true
\begin{equation*}
\iint_{Q_T}\mu(H(x,Du_1)-H(x,Du_2))-Dv\cdot (m_1D_pH(x,Du_1)-m_2D_pH(x,Du_2))dxdt\leq0\ ,
\end{equation*}
by convexity of $H$.
Using \eqref{claimuniqueness} we can conclude that
\begin{equation*}
\iint_{Q_T}(m_1 - m_2) \big(F[m_1(t)]-F[m_2(t)]\big)dtdx\leq0\ ,
\end{equation*}
In view of the monotonicity condition we get $m_1=m_2$ a.e.. Finally, by the fact that $u_1$ and $u_2$ solves the same equation with same final datum, they must concide.

\textit{Case 2}.\textit{ The subcritical case}. The proof of the case $s\in(\frac12,1)$ is simpler and it can be carried out as in Step 1, observing that $(u,m)$ is a classical solution. 
\end{proof}
\subsection{Small-time uniqueness}
The result of this section is the following
\begin{thm}\label{smallT}
For $s\in(\frac12,1)$, there exists $T^*>0$, depending on $d,s,H,F,m_0, u_T$ 
such that for all $T\in(0,T^*]$ system \eqref{fmfg} has at most one solution $(u,m)$.
\end{thm}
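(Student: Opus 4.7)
The plan is to subtract the two putative solutions $(u_i,m_i)$, derive a coupled linear system for $v:=u_1-u_2$ and $\mu:=m_1-m_2$ with zero data at the relevant endpoints, represent it via Duhamel's formula, and close the estimate by exploiting the smoothing of the fractional heat semigroup $\mathcal{T}_t$ from Lemma \ref{decaytorus}. The assumption $s>1/2$ will enter at exactly the point where one spatial derivative of the semigroup costs a factor $t^{-1/(2s)}$: the integral of this in $t$ over $[0,T]$ gains a small surplus factor $T^{1-1/(2s)}\to 0$, which is the source of the contraction for small $T$. Throughout, by Theorem \ref{addreg}, I may assume the solutions are classical with $\|Du_i\|_\infty+\|m_i\|_\infty\le C$ depending only on the data.

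Subtracting the equations, $(v,\mu)$ satisfies
\begin{align*}
-\partial_t v+(-\Delta)^s v &= R_1,\qquad v(\cdot,T)=0,\\
\partial_t\mu+(-\Delta)^s\mu &= \dive R_2,\qquad \mu(\cdot,0)=0,
\end{align*}
with $R_1=(F[m_1]-F[m_2])-(H(\cdot,Du_1)-H(\cdot,Du_2))$ and $R_2=-\mu\,D_pH(\cdot,Du_1)-m_2(D_pH(\cdot,Du_1)-D_pH(\cdot,Du_2))$. Fixing any $p\in(1,\infty)$ and using \eqref{H2}--\eqref{H4}, assumption \eqref{F2} together with the torus bound $\mathbf{d}_1(m_1(t),m_2(t))\le C\|\mu(t)\|_p$, and the uniform $L^\infty$ bounds on $Du_i, m_i$, I obtain
\[
\|R_1(t)\|_p+\|R_2(t)\|_p\le C\,\Phi(t),\qquad \Phi(t):=\|Dv(t)\|_p+\|\mu(t)\|_p.
\]

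Next I write Duhamel's formulas $v(t)=-\int_t^T \mathcal{T}_{\tau-t}R_1(\tau)\,d\tau$ and $\mu(t)=\int_0^t\mathcal{T}_{t-\tau}\dive R_2(\tau)\,d\tau$, differentiate $v$ once in $x$ and commute $\dive$ with $\mathcal{T}$ (legitimate since both are Fourier multipliers), and invoke Lemma \ref{decaytorus}(i) with $\nu=0$, $\gamma=1$ to get
\[
\Phi(t)\le C\int_0^T |t-\tau|^{-\frac{1}{2s}}\Phi(\tau)\,d\tau\le C'\,T^{1-\frac{1}{2s}}\sup_{[0,T]}\Phi,
\]
the last step requiring $1/(2s)<1$. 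Taking the sup on the left gives $\sup\Phi\le C'T^{1-1/(2s)}\sup\Phi$; choosing $T^*>0$ with $C'(T^*)^{1-1/(2s)}<1$ forces $\Phi\equiv 0$, hence $Dv\equiv 0$ and $\mu\equiv 0$. Since then $H(\cdot,Du_1)=H(\cdot,Du_2)$ and $F[m_1]=F[m_2]$, the HJB equation reduces to $-\partial_t v+(-\Delta)^s v=0$ with $v(\cdot,T)=0$ and $v$ independent of $x$, whence $v\equiv 0$.

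The main obstacle I expect is twofold: first, justifying the Duhamel representation at the limited regularity of energy solutions (for $s>1/2$ this is exactly where classical regularity from Theorem \ref{addreg} becomes indispensable), and second, verifying that the constants in the source-term estimates depend only on the uniform bounds of Theorem \ref{addreg}, so that the same $T^*$ works for any pair of solutions. Should the crude $L^p$-bounds on $R_1,R_2$ not suffice to close the system (for instance, if one needed $H_p^\mu$-control of the Hamiltonian difference with $0<\mu<2s-1$ in order to absorb a stronger semigroup factor), I would upgrade the estimates to $\|R_i\|_{H_p^\mu}$ via the fractional Leibniz (Kato--Ponce) inequality and the chain rule from the appendix; the decay $\|D\mathcal{T}_t\|_{H_p^\mu\to H_p^\mu}\le Ct^{-1/(2s)}$ is identical, and the contraction argument closes on $\sup_{[0,T]}(\|Dv\|_{H_p^\mu}+\|\mu\|_{H_p^\mu})$ in exactly the same way.
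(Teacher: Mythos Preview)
Your argument is correct and follows the same overall strategy as the paper --- Duhamel representation, semigroup smoothing from Lemma \ref{decaytorus}, and a contraction for small $T$ exploiting integrability of $t^{-1/(2s)}$ when $s>1/2$ --- but your choice of norms is genuinely more elementary. The paper closes on $C([0,T];H_p^{2s})\times C([0,T];H_p^{2s-1})$, which forces it to estimate the Fokker--Planck source in $H_p^{2s-1-\eps}$; this is why it invokes the Kato--Ponce inequality (Lemma \ref{katoponce}) and the fractional chain rule (Lemma \ref{chain}) to handle products and compositions. You instead close on $\sup_t(\|Dv(t)\|_p+\|\mu(t)\|_p)$, and because the semigroup absorbs exactly one derivative (on $Dv$) or one divergence (on $\mu$), only $L^p$ bounds on $R_1,R_2$ are needed; these follow from pointwise Lipschitz continuity of $H$ and $D_pH$ (via the $L^\infty$ bounds on $Du_i,m_i$ from Theorem \ref{addreg}) and the Lipschitz assumption \eqref{F2} combined with $\mathbf{d}_1\le C\|\mu\|_1\le C\|\mu\|_p$ on the torus. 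Your route therefore bypasses the fractional product/chain rules entirely, at the cost of slightly cruder control of the solutions; the paper's route sits naturally in the $\mathcal H_p^{\mu}$ framework developed throughout the article and would generalize more readily to settings where $L^\infty$ bounds are unavailable. Two minor remarks: the sign in your Duhamel formula for $v$ should be $v(t)=\int_t^T\mathcal{T}_{\tau-t}R_1(\tau)\,d\tau$ (harmless for the norm estimate), and your final step is cleaner than stated --- once $\mu\equiv 0$ and $Dv\equiv 0$, $R_1\equiv 0$ and the Duhamel formula gives $v\equiv 0$ directly.
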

Rewriting \eqref{fmfg} as a forward-forward system for $v,m$ setting $v(\cdot,t):=u(\cdot,T-t)$ for all $t\in[0,T]$, then
\begin{equation}\label{ffs}
\begin{cases}
v(x,t)=\mathcal{T}_tu_T(x)-\int_0^t\mathcal{T}_{t-\tau}\Phi^v[v,m](\tau)(x)d\tau\ ,\\
m(x,t)=\mathcal{T}_tm_0(x)+\int_0^t\mathcal{T}_{t-\tau}\Phi^m[v,m](\tau)(x)d\tau\ ,
\end{cases}
\end{equation}
where 
\begin{equation*}
\Phi^v[v,m](\tau)(\cdot)=F[m(T-\tau)](\cdot)-H(\cdot,Dv(\cdot,\tau))\ ,
\end{equation*}
\begin{equation*}
\Phi^m[v,m](\tau)(\cdot)=\dive(D_pH(\cdot,Dv(\cdot,T-\tau))m(\tau))
\end{equation*}
for $\tau\in[0,T]$. We will exploit the decay properties of $\mathcal{T}_t$.

\begin{proof}[Proof of Theorem \ref{smallT}]
For $p > 1$ and $\mu \ge 0$, let us denote by
\begin{equation*}
X_p^\mu:=C([0,T];H_p^{\mu}(\T^d)).
\end{equation*}

First, observe that any solution is classical by Theorem \ref{addreg}, and therefore it belongs to $X_p^{2s}\times X_p^{2s-1}$. Moreover, every solution of \eqref{fmfg} can be seen as a fixed point of the map $\Psi:(v,m)\longmapsto (\hat{v},\hat{m})$, where
\begin{equation}\label{mapfixed}
\begin{cases}
\hat{v}(t)=\mathcal{T}_tu_T(x)-\int_0^t\mathcal{T}_{t-\tau}\Phi^v[v,m](\tau)(x)d\tau\ ,\\
\hat{m}(t)=\mathcal{T}_tm_0(x)+\int_0^t\mathcal{T}_{t-\tau}\Phi^m[v,m](\tau)(x)d\tau\ .
\end{cases}
\end{equation}
We prove that the fixed point of $\Psi$ defined in \eqref{mapfixed} is unique by the contraction properties of $\Psi$ itself that are valid for small $T$. Let $(v_1,m_1)$ and $(v_2,m_2)$ be two fixed points of $\Psi$. Set $\epsilon=d\left(\frac1p-\frac{1}{\bar{p}}\right)<2s-1$ with $\bar{p}>p$. This choice yields
\[
\norm{m(\tau)}_{2s-1-\epsilon,\bar{p}}\leq C\norm{m(\tau)}_{2s-1,p}
\]
for some $C>0$ in view of Lemma \ref{inclstat}.
We apply Lemma \ref{decaytorus}-(i) (with $\nu=2s-1-\eps$ and $\gamma=1+\eps$) and the assumptions on $F$ and $H$ to get
\begin{multline*}
\norm{\int_0^t\mathcal{T}_{t-\tau}(\Phi^v[v_1,m_1](\tau)(x)-\Phi^v[v_2,m_2](\tau)(x))d\tau}_{2s,p}\leq\\
\leq  \int_0^t\norm{\mathcal{T}_{t-\tau}(\Phi^v[v_1,m_1](\tau)(x)-\Phi^v[v_2,m_2](\tau)(x))}_{2s,p}d\tau\\
\leq C_1\left(\int_0^t(t-\tau)^{-\frac{1+\eps}{2s}}\norm{F[m_1(T-\tau)](\cdot)-F[m_2(T-\tau)](\cdot)}_{2s-1-\eps,p}d\tau+\right.\\
\left.+\int_0^t (t-\tau)^{-\frac{1+\eps}{2s}} \norm{H(\cdot,Dv_1(\cdot,T-\tau))-H(\cdot,Dv_2(\cdot,T-\tau))}_{2s-1-\eps,p}d\tau\right) \\
\leq C_2\left(\int_0^t(t-\tau)^{-\frac{1+\eps}{2s}}\norm{m_1(\cdot,T-\tau)-m_2(\cdot,T-\tau)}_{2s-1,p}d\tau+\right.\\
\left.+\int_0^t(t-\tau)^{-\frac{1+\eps}{2s}}\norm{Dv_1(\cdot,T-\tau)-Dv_2(\cdot,T-\tau)}_{2s-1,p}d\tau\right) \\
\leq C_3T^{\frac{2s-1-\eps}{2s}}\left(\norm{m_1-m_2}_{X_p^{2s-1}}+\norm{v_1-v_2}_{X_p^{2s}}\right)\ ,
\end{multline*}
by taking $T$ small enough. \\
We now consider the term related to the Fokker-Planck equation. We apply Lemma \ref{decaytorus}-(i) with $\nu=2s-2-\eps$ and $\gamma=1+\eps$ to obtain
\begin{multline*}
\norm{\int_0^t\mathcal{T}_{t-\tau}(\Phi^m[v_1,m_1](\tau)(x)-\Phi^m[v_2,m_2](\tau)(x))d\tau}_{2s-1,p}\le\\
 \le\int_0^t\norm{\mathcal{T}_{t-\tau}(\Phi^m[v_1,m_1](\tau)(x)-\Phi^m[v_2,m_2](\tau)(x))}_{2s-1,p}d\tau \\
\int_0^t(t-\tau)^{-\frac{1+\eps}{2s}}\norm{\dive\big(D_pH(\cdot,Dv_1(\cdot,T-\tau))m_1(\tau)- D_pH(\cdot,Dv_2(\cdot,T-\tau))m_2(\tau)\big)}_{2s-2-\epsilon,p}\\
\leq C_1\left(\int_0^t(t-\tau)^{-\frac{1+\eps}{2s}}\norm{\dive(D_pH(\cdot,Dv_1(\cdot,T-\tau))(m_1(\tau)-m_2(\tau)))}_{2s-2-\epsilon,p}d\tau+\right.\\
\left.+\int_0^t(t-\tau)^{-\frac{1+\eps}{2s}}\norm{\dive(m_2(\tau)(D_pH(\cdot,Dv_1(\cdot,T-\tau))-D_pH(\cdot,Dv_2(\cdot,T-\tau))))}_{2s-2-\epsilon,p}d\tau\right) \\
\leq C_2\left(\int_0^t(t-\tau)^{-\frac{1+\eps}{2s}}\norm{D_pH(\cdot,Dv_1(\cdot,T-\tau))(m_1(\tau)-m_2(\tau))}_{2s-1-\epsilon,p}d\tau+\right.\\
\left.+\int_0^t(t-\tau)^{-\frac{1+\eps}{2s}}\norm{m_2(\tau)(D_pH(\cdot,Dv_1(\cdot,T-\tau))-D_pH(\cdot,Dv_2(\cdot,T-\tau)))}_{2s-1-\epsilon,p}d\tau)\right)
\end{multline*}
Then one has to observe that 
\begin{multline*}
\norm{D_pH(Dv_1(\cdot,T-\tau))(m_1(\tau)-m_2(\tau))}_{2s-1-\epsilon,p} \\
\leq C_3(\norm{D_pH}_{\bar{q}}\norm{m_1-m_2}_{2s-1-\epsilon,\bar{p}}+\norm{D_pH}_{2s-1-\epsilon,\bar{q}}\norm{m_1-m_2}_{\bar{p}}) \\
\leq C_4\norm{m_1-m_2}_{2s-1-\epsilon,\bar{p}}\leq C_5\norm{m_1-m_2}_{2s-1,p}\ ,
\end{multline*}
where we applied Lemma \ref{katoponce} to the second inequality, Lemma \ref{inclstat}-(iii) to the last one, the fact that $\norm{D_pH}_{2s-1-\epsilon,\bar{q}}$ is bounded independently of $T$ by the regularity assumption on $H$ and the $L^{\infty}$ bound on $Du$ and $m$.

Similarly, 
\begin{multline*}
\norm{m_2(\tau)(D_pH(\cdot,Dv_1(\cdot,T-\tau))-D_pH(\cdot,Dv_2(\cdot,T-\tau)))}_{2s-1-\epsilon,p} \\
\leq C_1\Big(\norm{m_2}_{\bar{q}}\norm{D_pH(\cdot,Dv_1)-D_pH(\cdot,Dv_2)}_{2s-1-\epsilon,\bar{p}}+\\ \qquad\qquad\qquad+\norm{m_2}_{2s-1-\epsilon,\bar{q}}\norm{D_pH(\cdot,Dv_1)-D_pH(\cdot,Dv_2)}_{\bar{p}}\Big) \\
\leq C_2\norm{D_pH(\cdot,Dv_1)-D_pH(\cdot,Dv_2)}_{2s-1-\epsilon,\bar{p}}\leq C_3\norm{D(v_1-v_2)}_{2s-1-\epsilon,\bar{p}} \\
\leq C_4\norm{D(v_1-v_2)}_{2s-1,p}\leq C_{5}\norm{v_1-v_2}_{2s,p},
\end{multline*}
where $C_i=C_i(d,s,\epsilon,p,\bar{p},\bar{q})$. This gives
\begin{multline*}
\norm{\int_0^t\mathcal{T}_{t-\tau}(\Phi^m[v_1,m_1](\tau)(x)-\Phi^m[v_2,m_2](\tau)(x))d\tau}_{2s-1,p} \\
\leq C_4T^{\frac{2s-1-\eps}{2s}}(\norm{v_1-v_2}_{X_p^{2s}}+\norm{m_1-m_2}_{X_p^{2s-1}})
\end{multline*}
by eventually taking $T$ small enough. At the end we get
\begin{multline*}
\norm{v_1-v_2}_{X_p^{2s}}+\norm{m_1-m_2}_{X_p^{2s-1}}=\|\Psi(v_1,m_1)-\Psi(v_2,m_2)\|_{X_p^{2s}\times X_p^{2s-1}} \\
\leq \frac{1}{2}(\norm{v_1-v_2}_{X_p^{2s}}+\norm{m_1-m_2}_{X_p^{2s-1}})\ ,
\end{multline*}
which allows to conclude $(v_1,m_1)=(v_2,m_2)$ for $T$ sufficiently small.
\end{proof}

\appendix
\section{Fractional product and chain rules on the torus}

We first present a version of the  Kato-Ponce inequality on Bessel potential spaces on the torus. We refer the reader to the classical results in \cite{KPV} and to \cite{Grafakos14} (and references therein) for more recent developments, all stated in the euclidean case.
\begin{lemma}\label{katoponce}
Let $\mu\in(0,1)$ and $1<p,p_1,q_1,p_2,q_2<\infty$ and such that $\frac1p=\frac{1}{p_1}+\frac{1}{q_1}=\frac{1}{p_2}+\frac{1}{q_2}$. Then,
\begin{equation*}
\norm{fg}_{H^{\mu}_p(\T^d)}\leq C( \norm{f}_{L^{p_1}(\T^d)}\norm{g}_{H^{\mu}_{q_1}(\T^d)}+\norm{f}_{H^{\mu}_{p_2}(\T^d)}\norm{g}_{L^{q_2}(\T^d)})
\end{equation*}
for some $C > 0$.
\end{lemma}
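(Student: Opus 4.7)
The plan is to derive the periodic Kato-Ponce inequality by transferring the known Euclidean version (see \cite{Grafakos14} and references therein) to the torus. By density of $C^\infty(\T^d)$ in $H_r^\mu(\T^d)$, it suffices to establish the bound for smooth $f, g$. In view of the equivalence $\|u\|_{H^\mu_p(\T^d)} \sim \|u\|_{L^p(\T^d)} + \|(-\Delta)^{\mu/2} u\|_{L^p(\T^d)}$ from Remark \ref{isometry}, H\"older's inequality handles $\|fg\|_{L^p(\T^d)} \le \|f\|_{L^{p_1}(\T^d)}\|g\|_{L^{q_1}(\T^d)}$, so the real work lies in estimating $\|(-\Delta)^{\mu/2}(fg)\|_{L^p(\T^d)}$.

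The first main step is an extension via a cutoff. Fix $\chi \in C^\infty_c(\R^d)$ with $\chi \equiv 1$ on an open neighborhood of $[0,1]^d$, view $f, g$ as their periodic extensions on $\R^d$, and set $\tilde f := \chi f$, $\tilde g := \chi g$. The extension argument from the proof of Lemma \ref{inclstat} --- trivial boundedness at integer orders combined with complex interpolation --- yields
\[
\|\chi u\|_{H^\mu_r(\R^d)} \le C \|u\|_{H^\mu_r(\T^d)}, \qquad \mu \ge 0,\ r \in (1,\infty),
\]
while $\|\chi u\|_{L^r(\R^d)} \le C_\chi \|u\|_{L^r(\T^d)}$ is immediate from periodicity and compact support of $\chi$. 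Applying the Euclidean Kato-Ponce inequality to the compactly supported smooth pair $(\tilde f, \tilde g)$ and combining the bounds above gives
\[
\|\tilde f \tilde g\|_{H^\mu_p(\R^d)} \le C\bigl(\|f\|_{L^{p_1}(\T^d)}\|g\|_{H^\mu_{q_1}(\T^d)} + \|f\|_{H^\mu_{p_2}(\T^d)}\|g\|_{L^{q_2}(\T^d)}\bigr).
\]

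The main obstacle is the converse transfer step: bounding $\|fg\|_{H^\mu_p(\T^d)}$ by $\|\chi^2 fg\|_{H^\mu_p(\R^d)}$. Since $\chi^2 \equiv 1$ on $[0,1]^d$ and all derivatives of $\chi^2$ vanish there, this bound is immediate at the integer endpoints $\mu \in \{0, 1\}$, where the $W^{\mu,p}(\T^d)$ norm of the periodic function $fg$ is recovered by restricting the $\R^d$ norm of $\chi^2 fg$ to the fundamental domain. The case $\mu \in (0,1)$ is then obtained by complex interpolation, using $H^\mu_p \simeq [L^p, W^{1,p}]_\mu$ both on $\R^d$ and $\T^d$ (see Subsection \ref{susb}). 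A more robust alternative that bypasses the interpolation of the restriction is to develop a periodic Littlewood-Paley decomposition by defining projections $\Delta_j^{\T^d}$ as Fourier multipliers with compactly supported bump symbols, transferring their $L^p(\T^d)$-boundedness (and the associated square function characterization of $H^\mu_p(\T^d)$) from $\R^d$ via the Stein-Weiss Theorem VIII.3.8 already invoked in Remark \ref{isometry}, and then repeating the classical Bony paraproduct proof of Kato-Ponce directly on the torus.
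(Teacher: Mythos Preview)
Your primary route has a genuine gap at the ``converse transfer'' step. Complex interpolation applies to a fixed linear operator bounded on both endpoint couples, not to an inequality that happens to hold for one particular input. The map you implicitly invoke---restrict to $[0,1]^d$ and regard as periodic---is \emph{not} bounded from $W^{1,p}(\R^d)$ to $W^{1,p}(\T^d)$: a generic $v\in W^{1,p}(\R^d)$, restricted and periodized, develops jumps at the faces of the cube. Your endpoint bounds $\|fg\|_{W^{k,p}(\T^d)}\le\|\chi^2 fg\|_{W^{k,p}(\R^d)}$ for $k=0,1$ rely on the special fact that $\chi^2 fg$ coincides with the periodic $fg$ on a neighborhood of $[0,1]^d$, and interpolation cannot see that. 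A clean fix is to pass through the linear operator $v\mapsto \Psi^{-1}P(\psi v)$, where $\psi\in C^\infty_c(\R^d)$ equals $1$ on $\mathrm{supp}\,\chi$, $P(w)=\sum_{k\in\Z^d}w(\cdot+k)$ is periodization (a finite sum here), and $\Psi=\sum_{k}\chi^2(\cdot+k)\ge 1$ is smooth periodic. This composite is bounded $W^{k,p}(\R^d)\to W^{k,p}(\T^d)$ for $k=0,1$, hence by interpolation $H^\mu_p(\R^d)\to H^\mu_p(\T^d)$, and it sends $\chi^2 fg$ to $fg$.

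Your Littlewood--Paley alternative is correct but laborious. The paper takes a shorter path that avoids cutoffs altogether: it writes $(-\Delta)^{\mu/2}(fg)$ on $\R^d$ as $T_{\sigma_1}((-\Delta)^{\mu/2}f,g)+T_{\sigma_2}(f,(-\Delta)^{\mu/2}g)$ for two bilinear Coifman--Meyer symbols $\sigma_i$ (homogeneous of degree zero), and then applies the \emph{bilinear} transference theorem of Fan--Sato \cite[Theorem~3]{FS}, which carries a Coifman--Meyer multiplier on $\R^d\times\R^d$ directly to one on $\T^d\times\T^d$. The transfer happens at the level of the symbol, not the functions, so no extension/restriction is needed. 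What your (patched) approach buys in exchange is that it only uses \emph{linear} transference---already in hand from Remark~\ref{isometry} and the extension in Lemma~\ref{inclstat}---together with the Euclidean Kato--Ponce as a black box, whereas the paper must cite a genuinely bilinear transference result.
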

We recall that the inequality can be proven in the euclidean case as follows, see e.g. \cite{GrafakosPDE}. 
First, a bilinear multiplier operator with symbol $m$ acting on $f,g\in\mathcal{S}(\R^d)$ is defined as
\begin{equation}\label{cmR}
T_m(f,g)(x):=\iint_{\R^{2d}}m(\xi,\eta)\mathcal{F}f(\xi)\mathcal{F}g(\eta)e^{2\pi i(\xi+\eta)\cdot x}d\xi d\eta\ .
\end{equation}
We are interested in the symbol $|\xi + \eta|^\mu$, since
\[
(-\Delta)^{\mu/2}(fg)(x)=\iint_{\R^{2d}}|\xi+\eta|^{\mu}\mathcal{F}f(\xi)\mathcal{F}g(\eta)e^{2\pi i(\xi+\eta)\cdot x}d\xi d\eta\ .
\]
Then one performs the partition $m(\xi,\eta) = \sigma_1(\xi,\eta) |\xi|^\mu + \sigma_2(\xi,\eta) |\eta|^\mu$, where
\[
\sigma_1(\xi,\eta):=\frac{|\xi+\eta|^{\mu}}{|\xi|^\mu}\left(1-\phi\left(\frac{|\xi|}{|\eta|}\right)\right), \qquad
\sigma_2(\xi,\eta):=\frac{|\xi+\eta|^{\mu}}{|\eta|^\mu}\phi\left(\frac{|\xi|}{|\eta|}\right)
\]
and $\phi$ is a suitable $C_0^{\infty}$ cut-off function; we are then reduced to prove the boundedness of the operators $T_{\sigma_i}$ on $L^{p_i}(\R^d) \times L^{q_i}(\R^d)$. Indeed, this would yield
\[
\norm{(-\Delta)^{\mu/2}(fg)}_{L^p(\R^d)}\leq C\left( \norm{(-\Delta)^{\mu/2}f}_{L^{p_1}(\R^d)}\norm{g}_{L^{q_1}(\R^d)}+\norm{f}_{L^{p_2}(\R^d)}\norm{(-\Delta)^{\mu/2}g}_{L^{q_2}(\R^d)}\right),
\]
and the desired estimate with $H^\mu_p$ norms would follow by equivalence of $\|\cdot\|_{\mu,p}$ with $\|\cdot\|_{p} + \|(-\Delta)^{\frac{\mu}{2}}\cdot\|_{p}$.
The key result for boundedness of $T_{\sigma_i}$ is the Coifman-Meyer multiplier theorem (see \cite[Theorem A]{Grafakos14} and references therein). Note that the assumptions of such theorem are fulfilled, since the multipliers $\sigma_i$ are homogeneous of degree zero.
\begin{proof}[Proof of Lemma \ref{katoponce}]
One may argue as in the euclidean case. We start by observing that bilinear operators $T_{\sigma_i}$ have a periodic counterpart defined on the torus, that is
\begin{equation}\label{cfT}
B_{\sigma_i}(f,g)(x):=\sum_{\mu\in\Z^d}\sum_{\nu\in\Z^d}\sigma_i(\mu,\nu)\hat f(\mu)\hat g(\nu)e^{2\pi i(\mu+\nu)\cdot x}
\end{equation}
By the transference results on multilinear multipliers in \cite[Theorem 3]{FS}, since $\sigma_i$ are bilinear Coifman-Meyer multipliers on $\R^d\times\R^d$, then they are so also on $\T^d\times\T^d$. One has just to be careful since $\sigma_i$ are discontinuous at $(0, 0)$, but it is sufficient to have them defined in $(0, 0)$ so that $(0, 0)$ is a Lebesgue point for both $\sigma_i$.
\end{proof}
We also present a chain rule for fractional Sobolev spaces. 
\begin{lemma}\label{chain}
Let $\mu > 0$, and $\Psi : \T^d\times\R^d\to\R$ be of class $C^{\lceil{\mu}\rceil}(\T^d\times\R^d)$ with bounded derivatives on $\T^d\times\R^d$ up to order $\lceil{\mu}\rceil$. Let $u\in W^{\mu,p}(\T^d) \cap H^\mu_p(\T^d)$. Then
\[
\|\Psi(\cdot,u(\cdot))\|_{W^{\mu,p}(\T^d)}\leq C(\|u\|_{W^{\mu,p}(\T^d)} + 1)\ ,
\]
and, for all $\eps > 0$,
\[
\|\Psi(\cdot,u(\cdot))\|_{H^{\mu-\eps}_p(\T^d)}\leq C(\|u\|_{H^\mu_p(\T^d)} + 1)\ .
\]
\end{lemma}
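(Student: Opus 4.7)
The plan is to first reduce the Bessel-space statement to the Sobolev-space one and then prove the latter by induction on the integer part of $\mu$. By Lemma \ref{inclusioni}, $H_p^{\mu}(\T^d) \hookrightarrow W^{\mu-\eps/2,p}(\T^d) \hookrightarrow H_p^{\mu-\eps}(\T^d)$, so applying the first inequality with $\mu$ replaced by $\mu-\eps/2$ yields the second (up to renaming $\eps$). Thus the bulk of the work is the $W^{\mu,p}$ bound. Throughout, $C$ is allowed to depend on $\|u\|_{W^{\mu,p}}$ itself, the $+1$ simply ensuring a meaningful estimate when $\Psi(\cdot,0) \not\equiv 0$.

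Integer case, $\mu = k \in \mathbb{N}$. I would apply Fa\`a di Bruno's formula: for every multiindex $r$ with $|r| \le k$,
\begin{equation*}
D^r[\Psi(x, u(x))] = \sum c_{\alpha,\beta,\vec\gamma}\, (\partial_x^\alpha \partial_p^\beta \Psi)(x, u(x)) \prod_{j=1}^{|\beta|} D^{\gamma_j} u(x),
\end{equation*}
where the (finite) sum is over multiindices satisfying $|\alpha| + \sum_j |\gamma_j| = |r|$ and $|\gamma_j| \ge 1$. The composition $(\partial_x^\alpha \partial_p^\beta \Psi)(x,u(x))$ is uniformly bounded by assumption on $\Psi$. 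The $L^p$-norm of the product is then estimated via H\"older with exponents $p_j = pk/|\gamma_j|$ (so $\sum 1/p_j = 1/p$) and the Gagliardo-Nirenberg inequality on $\T^d$, which gives $\|D^{\gamma_j}u\|_{L^{p_j}} \le C(\|u\|_{W^{k,p}})$. Summing over $r$ yields the desired bound.

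Fractional case, $\mu = \sigma \in (0,1)$. Using the Gagliardo characterization of $W^{\sigma,p}(\T^d)$ and the mean-value estimate
\begin{equation*}
|\Psi(x, u(x)) - \Psi(y, u(y))| \le C\bigl(|x-y| + |u(x) - u(y)|\bigr)
\end{equation*}
(valid since $D_x\Psi, D_p\Psi \in L^\infty$), the double integral splits as
\begin{equation*}
[\Psi(\cdot, u)]_{W^{\sigma,p}}^p \le C\iint_{\T^d\times\T^d} \frac{dxdy}{|x-y|^{d - (1-\sigma)p}} + C\,[u]_{W^{\sigma,p}}^p.
\end{equation*}
Since $(1-\sigma)p > 0$, the first integral is finite on the torus, and the second is manifestly controlled by $\|u\|_{W^{\sigma,p}}^p$.

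General case, $\mu = k + \sigma$ with $k \ge 1$ and $\sigma \in (0,1)$. Combine the two: apply the Fa\`a di Bruno decomposition above with $|r| = k$, and estimate the $W^{\sigma,p}$ seminorm of each term $A(x)B(x)$, where $A = (\partial_x^\alpha \partial_p^\beta \Psi)(\cdot, u(\cdot))$ is a smooth composition and $B = \prod_j D^{\gamma_j} u$ is a polynomial in derivatives of $u$ of orders summing to $k - |\alpha|$. The seminorm of the product is handled by the Leibniz-type bound
\begin{equation*}
[AB]_{W^{\sigma,p}} \le C\bigl(\|A\|_{L^\infty} [B]_{W^{\sigma,p}} + \|B\|_{L^{q_1}} [A]_{W^{\sigma,q_2}}\bigr),
\end{equation*}
with $1/q_1 + 1/q_2 = 1/p$. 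The term $[A]_{W^{\sigma,q_2}}$ is controlled via the previous fractional case applied to $\partial_x^\alpha \partial_p^\beta\Psi$ and $u \in W^{\sigma,q_2}$. The factors $\|B\|_{L^{q_1}}$ and $[B]_{W^{\sigma,p}}$ are estimated by expanding the product and applying H\"older together with Gagliardo-Nirenberg interpolation of $u$ between $W^{k+\sigma,p}$ and $L^p$. The main obstacle I foresee is precisely this last bookkeeping: one must choose the exponents in H\"older and Gagliardo-Nirenberg so that each factor of $D^{\gamma_j}u$ lands in a space controlled by $\|u\|_{W^{\mu,p}}$ and so that the fractional seminorm of the product is absorbed. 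An alternative, if this direct route proves cumbersome, is to invoke the Kato-Ponce inequality of Lemma \ref{katoponce} in $H^\sigma_p$ and transfer to $W^{\sigma,p}$ through Lemma \ref{inclusioni}, accepting a harmless arbitrarily small loss in the regularity index (already tolerated in the second inequality of the statement).
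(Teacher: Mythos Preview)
Your reduction of the $H^\mu_p$ estimate to the $W^{\mu,p}$ one via Lemma~\ref{inclusioni} is exactly what the paper does. After that the strategies diverge: the paper does not touch the Gagliardo seminorm or Fa\`a di Bruno, but instead exploits the \emph{trace/real-interpolation} description of $W^{\mu,p}$. For $0<\mu<1$ it writes $W^{\mu,p}(\T^d)=T(p,\alpha,W^{1,p},L^p)$, picks $f:(0,\infty)\to W^{1,p}$ with $f(0)=u$ nearly optimal for the trace norm, observes the one-line integer bound $\|\Psi(\cdot,f(\cdot))\|_{W^{1,p}}\le C(1+\|f\|_{W^{1,p}})$, and uses $t\mapsto\Psi(\cdot,f(\cdot,t))$ as a competitor in the trace definition of $\|\Psi(\cdot,u)\|_{W^{\mu,p}}$. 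This is shorter and entirely linear in $u$: no products of derivatives appear, so no Gagliardo--Nirenberg is needed. Your Gagliardo-seminorm argument for $0<\mu<1$ is equally correct and perhaps more elementary, at the cost of an explicit estimate of the double integral.

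For $\mu\ge1$ there is a genuine gap in your integer step that you should flag. The claimed bound $\|D^{\gamma_j}u\|_{L^{p_j}}\le C(\|u\|_{W^{k,p}})$ with $p_j=pk/|\gamma_j|$ does \emph{not} follow from $u\in W^{k,p}$ alone: Gagliardo--Nirenberg between $W^{k,p}$ and $L^p$ only returns $D^{\gamma_j}u\in L^p$, and to reach $L^{p_j}$ with $p_j>p$ one needs an extra endpoint such as $u\in L^\infty$ (or $kp>d$, which implies it). For instance with $k=2$ the term $D^2_{pp}\Psi\,(Du)^2$ requires $Du\in L^{2p}$. The same obstruction reappears in your general case when bounding $\|B\|_{L^{q_1}}$ and $[B]_{W^{\sigma,p}}$. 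In the paper's applications $u$ stands for the gradient of the HJB solution and is bounded in $L^\infty$, so this is harmless in context; but as a self-contained lemma your argument needs either the hypothesis $u\in L^\infty$ or the restriction $\mu p>d$. Note that the trace-method proof, which the paper declares ``similar'' for $\mu>1$, runs into the very same issue once one writes $\|\Psi(\cdot,f)\|_{W^{2,p}}$, so this is not a defect of your route specifically.
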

\begin{proof} We just consider the case $0 <\mu < 1$, the general case being treated similarly. We start with the inequality in $W^{\mu,p}$ spaces, using their construction through the trace method. It is sufficient to recall that
\[
\|u\|_{W^{1-\mu,p}(\T^d)} = \inf_{u=f(0)} \, \max\{\|t^{\mu-1/p}f(t)\|_{L^p(0,\infty;W^{1,p}(\T^d))};\|t^{\mu-1/p}f'(t)\|_{L^p(\T^d\times(0,\infty))}\},
\]
and observe that
\[
\|\Psi(x, f(x))\|_{W^{1,p}(\T^d)} \le C(1 + \|f\|_{W^{1,p}(\T^d)}),
\]
where the constant $C$ depends on global bounds on the derivatives of $\Psi$. Then, one uses $\Psi(x, f(x))$ to estimate $\|\Psi(\cdot,u(\cdot))\|_{W^{1-\mu,p}(\T^d)}$, where $f$ is close to the infimum in the definition of $\|u\|_{W^{1-\mu,p}(\T^d)}$. The analogous inequality in $H^\mu_p$ spaces is then a consequence of Lemma \ref{inclusioni}.
\end{proof}

\section{Regularity in parabolic fractional H\"older spaces}
We consider the problem
\begin{equation}\label{fracreghol}
\begin{cases}
\partial_tu+(-\Delta)^su=f(x,t)&\text{ in }Q_T\ ,\\
u(x,0)=u_0(x)&\text{ in }\T^d\ .
\end{cases}
\end{equation}
The purpose of this section is to present a fractional analogue of classical parabolic H\"older and Sobolev regularity. We point out that related results for this problem on the euclidean space appeared in \cite[Appendix A]{CF} and \cite{CL}, see also references therein. 
We stress that transference of these results to the periodic setting is delicate, in particular concerning regularity in Sobolev spaces, and to our knowledge they are not explicitly stated in the literature. We present some proofs that make use of interpolation methods and results for abstract parabolic equations, with some details for the reader's convenience. 

As for regularity in H\"older spaces, we follow the approach of \cite[Chapter 5-6]{LunardiSNS},\cite[Chapter 3-4]{LunardiNote} (see also \cite[Chapter 5]{LunardiBook}).
\begin{thm}\label{Regularity}
Let $\alpha\in(0,1)$ so that $2s+\alpha$ is not an integer, $f\in \mathcal{C}^{\alpha,\frac {\alpha} {2s}}(Q_T)$ and $u_0\in C^{2s+\alpha}(\T^d)$. Then problem \eqref{fracreghol} has a unique classical solution $u$, and there exists a positive constant $C$ depending on $d, T, \alpha, s$ 
(which remains bounded for bounded values of $T$) such that
\begin{equation}\label{holderb}
\|\partial_tu\|_{\mathcal{C}^{\alpha,\frac{\alpha}{2s}}(Q_T)} + \|(-\Delta)^su\|_{\mathcal{C}^{\alpha,\frac{\alpha}{2s}}(Q_T)} \leq C(\|u_0\|_{C^{2s+\alpha}(\T^d)}+\|f\|_{\mathcal{C}^{\alpha,\frac {\alpha} {2s}}(Q_T)})\ .
\end{equation}
\end{thm}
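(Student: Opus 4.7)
The plan is to recast \eqref{fracreghol} as an abstract Cauchy problem $u'(t)=Au(t)+f(t)$, $u(0)=u_0$, on the Banach space $X=C(\T^d)$ with $A=-(-\Delta)^s$, and then invoke the optimal H\"older regularity theory for analytic semigroups of Da Prato--Grisvard--Sinestrari--Lunardi (see \cite[Chapter 4]{LunardiBook} and \cite[Chapter 4--5]{LunardiSNS}). A first check, which parallels Remark~\ref{anse}, is that $A$ generates an analytic semigroup on $X$: this follows from the kernel representation \eqref{repre2} together with the bound $\norm{D^\beta\hat p_t}_{L^1(\T^d)}\le Ct^{-|\beta|/2s}$ from \eqref{TT2}, yielding $\norm{A\mathcal{T}_t}_{\mathcal{L}(X)}\le C/t$, which is the standard criterion for analyticity.

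The second, and more delicate, step is the identification of the real interpolation spaces in the H\"older scale. For $\theta\in(0,1)$ with $2s\theta\notin\mathbb{N}$, one must show
\[
D_A(\theta,\infty)\simeq C^{2s\theta}(\T^d).
\]
On $\R^d$ this identification is classical (via the subordination formula $-(-\Delta)^s=c_s\int_0^\infty(e^{t\Delta}-I)t^{-1-s}\,dt$, which reduces it to the analogous statement for $-\Delta$, known since Stein). The periodic version is obtained by transference, using either the periodization of the heat kernel as in \eqref{repre2} or the extension-restriction scheme already exploited in Lemma~\ref{inclstat}. Taking $\theta=\alpha/(2s)$, the hypothesis $u_0\in C^{2s+\alpha}(\T^d)$ becomes $u_0\in D(A)$ with $Au_0\in D_A(\alpha/(2s),\infty)$; moreover, by the characterization recalled after \eqref{fracholder}, one has
\[
\mathcal{C}^{\alpha,\alpha/(2s)}(Q_T)\simeq L^\infty(0,T;C^\alpha(\T^d))\cap C^{\alpha/(2s)}([0,T];C(\T^d)),
\]
so the assumption on $f$ translates into $f\in L^\infty(0,T;D_A(\alpha/(2s),\infty))\cap C^{\alpha/(2s)}([0,T];X)$, which is precisely the data required by the abstract theory.

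With this setup in place, existence, uniqueness and the quantitative bound \eqref{holderb} follow from the abstract optimal regularity theorem: the mild solution
\[
u(t)=\mathcal{T}_tu_0+\int_0^t\mathcal{T}_{t-\tau}f(\tau)\,d\tau
\]
is classical, and
\[
\partial_t u,\ Au\in L^\infty(0,T;D_A(\alpha/(2s),\infty))\cap C^{\alpha/(2s)}([0,T];X),
\]
with the corresponding norm controlled by $\norm{u_0}_{D_A(1+\alpha/(2s),\infty)}+\norm{f}_{L^\infty(D_A(\alpha/(2s),\infty))}+[f]_{C^{\alpha/(2s)}(X)}$. Translating back gives \eqref{holderb}. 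At the technical level, the proof of the abstract theorem proceeds by splitting $\int_0^t\mathcal{T}_{t-\tau}f(\tau)\,d\tau = \int_0^t\mathcal{T}_{t-\tau}(f(\tau)-f(t))\,d\tau + \int_0^t\mathcal{T}_{t-\tau}f(t)\,d\tau$ and estimating each piece via $\norm{A\mathcal{T}_\sigma}\le C/\sigma$ combined with Lemma~\ref{decaytorus}-(ii); the argument is completely parallel to the case $s=1$.

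The main obstacle is the first, nontrivial ingredient: verifying the H\"older identification $D_A(\theta,\infty)\simeq C^{2s\theta}(\T^d)$ on the torus with the correct constants and handling the exclusion of the integer values $2s+\alpha\in\mathbb{N}$ (which is exactly the hypothesis of the theorem, and is what guarantees that the trace interpolation spaces truly coincide with H\"older spaces rather than with Zygmund spaces). Once this identification and the analyticity of $\mathcal{T}_t$ on $C(\T^d)$ are established, the rest of the argument is an application of an off-the-shelf abstract result, and the boundedness of $C$ for bounded $T$ is visible from the explicit dependence in the Duhamel estimates.
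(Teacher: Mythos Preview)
Your approach is correct and essentially coincides with the paper's: both cast \eqref{fracreghol} as an abstract parabolic problem for the analytic semigroup $\mathcal{T}_t$ on $C(\T^d)$ and appeal to the Sinestrari--Lunardi optimal regularity theory, with the identification of the interpolation spaces $D_A(\theta,\infty)\simeq C^{2s\theta}(\T^d)$ as the key ingredient. The only difference is cosmetic: the paper carries out the spatial part (your $L^\infty(0,T;D_A(\alpha/2s,\infty))$ bound) by a direct $K$-method splitting of the Duhamel formula using the semigroup decay estimates of Lemma~\ref{EstHolFracHeat}, which is precisely the hands-on version of the interpolation identification you invoke, and then defers the time-H\"older part to the abstract theorem exactly as you do.
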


We begin with some preliminary decay estimates for the fractional heat semigroup $\mathcal{T}_t$ in H\"older spaces.
\begin{lemma}\label{EstHolFracHeat}
For every $0\leq\theta_1<\theta_2$, $\theta_1,\theta_2\in\R$, there exists $C=C(\theta_1,\theta_2)$ such that for all $f\in C^{\theta_1}(\T^d)$
  \[
  \|\mathcal{T}_tf\|_{C^{\theta_2}(\T^d)}\leq Ct^{-(\theta_2-\theta_1)/2s} \|f\|_{C^{\theta_1}(\T^d)}\ .
  \]
\end{lemma}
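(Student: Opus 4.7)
Write $\theta_i=k_i+\alpha_i$ with $k_i\in\mathbb{N}_0$ and $\alpha_i\in[0,1)$, taking $\alpha_i=0$ precisely when $\theta_i$ is a non-negative integer. Since $\|u\|_{C^{\theta_2}(\T^d)}$ is controlled by $\|D^ju\|_\infty$ for $0\le j\le k_2$ together with $[D^{k_2}u]_{C^{\alpha_2}(\T^d)}$ when $\alpha_2>0$, the plan is to first establish the $L^\infty$-type bound
\begin{equation}\label{planeq}
\|D^\beta \mathcal{T}_t f\|_{\infty}\leq C\,t^{-(|\beta|-\theta_1)/2s}\|f\|_{C^{\theta_1}(\T^d)}\qquad\text{for all multi-indices $|\beta|\geq k_1$,}
\end{equation}
and then upgrade it to a H\"older seminorm bound on $D^{k_2}\mathcal{T}_tf$ via a standard interpolation inequality.

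The core ingredients for \eqref{planeq} are the convolution representation $\mathcal{T}_tg=\hat p_t\star_{\T^d}g$ from \eqref{repre2}, the $L^1$-bound $\|D^\gamma\hat p_t\|_{L^1(\T^d)}\leq Ct^{-|\gamma|/2s}$ from \eqref{TT2}, and the cancellation $\int_{\T^d}D^\gamma\hat p_t=0$ whenever $|\gamma|\geq 1$, which is immediate from $\hat p_t(x)=\sum_{z\in\Z^d}e^{-t|z|^{2s}}e^{2\pi iz\cdot x}$ since only the $z=0$ coefficient survives integration. Commuting $\mathcal{T}_t$ with $D^{k_1}$, so that $D^\beta\mathcal{T}_tf=D^{\beta-k_1}\hat p_t\star_{\T^d}(D^{k_1}f)$, I would distinguish two situations. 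If $\alpha_1=0$, a direct application of Young's inequality combined with \eqref{TT2} gives $\|D^\beta\mathcal{T}_tf\|_\infty\leq Ct^{-(|\beta|-k_1)/2s}\|D^{k_1}f\|_\infty$. If $\alpha_1\in(0,1)$, I would exploit the cancellation to rewrite (for any $|\beta|>k_1$, i.e.\ $|\beta-k_1|\geq 1$)
\[
D^\beta\mathcal{T}_tf(x)=\int_{\T^d}D^{\beta-k_1}\hat p_t(x-y)\bigl[D^{k_1}f(y)-D^{k_1}f(x)\bigr]\,dy,
\]
obtaining $|D^\beta\mathcal{T}_tf(x)|\leq [D^{k_1}f]_{C^{\alpha_1}}\int_{\T^d}|D^{\beta-k_1}\hat p_t(z)|\,\dist(z,0)^{\alpha_1}\,dz$, and estimating the last integral by $Ct^{-(|\beta|-k_1-\alpha_1)/2s}=Ct^{-(|\beta|-\theta_1)/2s}$ via the scaling $p_t(x)=t^{-d/2s}p_1(t^{-1/2s}x)$ of the $\R^d$-kernel plus periodization.

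With \eqref{planeq} in hand, when $\alpha_2=0$ the bound on $\|\mathcal{T}_tf\|_{C^{\theta_2}}$ follows by summing over $|\beta|\leq k_2$. When $\alpha_2\in(0,1)$, I would apply the elementary interpolation $[g]_{C^{\alpha_2}(\T^d)}\leq 2\|g\|_\infty^{1-\alpha_2}\|Dg\|_\infty^{\alpha_2}$ to $g=D^{k_2}\mathcal{T}_tf$ and insert \eqref{planeq} with $|\beta|=k_2$ and $|\beta|=k_2+1$ respectively, producing
\[
[D^{k_2}\mathcal{T}_tf]_{C^{\alpha_2}}\leq C\bigl(t^{-(k_2-\theta_1)/2s}\bigr)^{1-\alpha_2}\bigl(t^{-(k_2+1-\theta_1)/2s}\bigr)^{\alpha_2}\|f\|_{C^{\theta_1}}=Ct^{-(\theta_2-\theta_1)/2s}\|f\|_{C^{\theta_1}},
\]
which is exactly the required bound.

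The main technical obstacle is establishing the refined estimate $\int_{\T^d}|D^{\beta-k_1}\hat p_t(z)|\,\dist(z,0)^{\alpha_1}\,dz\leq Ct^{-(|\beta|-k_1-\alpha_1)/2s}$ for the \emph{periodic} kernel. On $\R^d$ this is transparent from the self-similarity of $p_t$; the periodization $\hat p_t=\sum_z p_t(\cdot+z)$ preserves the behaviour for small $t$ since $\hat p_t-p_t$ is smooth and uniformly controlled there, while for $t$ in any bounded range the estimate remains valid up to absorbing a constant depending on the upper bound of $t$, which is the regime in which Lemma \ref{EstHolFracHeat} will be applied in the proof of Theorem \ref{Regularity}.
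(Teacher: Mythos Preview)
Your direct kernel-estimate approach is essentially sound and more hands-on than the paper's, but as written it has a gap in the case $k_1=k_2$ with $\alpha_1>0$ (e.g.\ $\theta_1=0.3$, $\theta_2=0.7$). Your core bound $\|D^\beta\mathcal{T}_tf\|_\infty\le Ct^{-(|\beta|-\theta_1)/2s}\|f\|_{C^{\theta_1}}$ at $|\beta|=k_1$ would read $\|D^{k_1}\mathcal{T}_tf\|_\infty\le Ct^{\alpha_1/2s}\|f\|_{C^{\theta_1}}$, which is false: the right-hand side vanishes as $t\to0$ while the left does not. Your cancellation argument indeed only produces that bound for $|\beta|>k_1$ when $\alpha_1>0$, yet your final interpolation step invokes it at $|\beta|=k_2=k_1$. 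The repair is easy: in this regime replace the $L^\infty$--$C^1$ interpolation by the $C^{\alpha_1}$--$C^1$ one, namely
\[
[g]_{C^{\alpha_2}}\le C\,[g]_{C^{\alpha_1}}^{(1-\alpha_2)/(1-\alpha_1)}\,\|Dg\|_\infty^{(\alpha_2-\alpha_1)/(1-\alpha_1)},
\]
and feed in the contraction bound $[D^{k_1}\mathcal{T}_tf]_{C^{\alpha_1}}\le\|f\|_{C^{\theta_1}}$ together with your estimate at $|\beta|=k_1+1$; the resulting power of $t$ is exactly $-(\alpha_2-\alpha_1)/2s=-(\theta_2-\theta_1)/2s$.

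The paper's proof takes a different, more abstract route: it only establishes the integer-order bounds $\|\mathcal{T}_tf\|_{C^{k+h}}\le Ct^{-k/2s}\|f\|_{C^h}$ (essentially your Young-inequality step, via Remark~\ref{DerivateKernel}), then identifies the fractional H\"older spaces as real interpolation spaces between the integer ones and appeals to the general boundedness of operators on interpolation couples \cite[Proposition~1.2.6]{LunardiBook}. This avoids all case-splitting and the refined moment estimate on the periodized kernel, at the price of importing the Reiteration Theorem; your argument, once patched as above, is more self-contained.
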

\begin{proof}
Computations of Remark \ref{DerivateKernel} (in particular the representation formula for $\mathcal{T}_t$ and Young's inequality for convolution) show that for every $k>h$, $k,h\in\N\cup\{0\}$ there exists $C=C(h,k)$
\[
\|\mathcal{T}_tf\|_{C^{k+h}(\T^d)}\leq Ct^{-\frac{k}{2s}}\|f\|_{C^h(\T^d)}\ .
\]
This implies that $\mathcal{T}_tf:C^{h}(\T^d)\to C^{k+h}(\T^d)$ is bounded for $t > 0$. Recall that, as a consequence of the so-called Reiteration Theorem \cite[Section 1.2.4]{LunardiBook} and \cite[Theorem 1.1.14 and Example 1.1.7]{LunardiNote} (whose proofs can be readily adapted to the torus) we get
\[
(C^h(\T^d),C^{k+h}(\T^d))_{\alpha,\infty}=C^{h+\alpha}(\T^d)\ .
\]
In addition, one also has $\mathcal{T}_tf:L^{\infty}(\T^d)\to L^{\infty}(\T^d)$. By interpolation (see \cite[Proposition 1.2.6]{LunardiBook}), $\mathcal{T}_t$ maps $C^{\theta_1}(\T^d)$ onto $C^{\theta_2}(\T^d)$ with the desired estimate.
\end{proof}
\begin{proof}[Proof of Theorem \ref{Regularity}]
\textit{Step 1}. We first prove the existence of a constant $C>0$ 
such that
\[
\sup_{t\in[0,T]}\|u(\cdot,t)\|_{C^{2s+\alpha}(\T^d)}\leq C(\sup_{t\in[0,T]}\|f(\cdot,t)\|_{C^{\alpha}(\T^d)}+\|u_0\|_{C^{2s+\alpha}(\T^d)})\ .
\]
We first observe that for $s,\alpha\in (0,1)$ such that $2s+\alpha$ is not an integer we have
\[
C^{2s+\alpha}(\T^d)=(C^{\alpha+\delta}(\T^d),C^{2s+\alpha+\delta}(\T^d))_{1-\delta/2s,\infty}\ , \quad 0<\delta<2s\ .
\]
We show that $u(\cdot,t)$ is bounded with values in $C^{2s+\alpha}(\T^d)$. Fix $t\in[0,T]$. Then, for every $\xi>0$ we split $u(t)$ as $u(t)=a(\xi)+b(\xi)+c(\xi)$ using Duhamel's formula, that is 
\begin{align*}
&  a(\xi)=\int_0^{\min\{\xi,t\}}\mathcal{T}_{\tau}f(t-\tau)(x)d\tau, \\ & b(\xi)=\int_{\min\{\xi,t\}}^t\mathcal{T}_{\tau}f(t-\tau)(x)d\tau, \\ & c(\xi)=\mathcal{T}_{t-\min\{\xi,t\}}\mathcal{T}_{\min\{\xi,t\}}u_0 .
\end{align*}
Then $a(\xi)\in C^{\alpha+\delta}(\T^d)$, $b(\xi),c(t)\in C^{2s+\alpha+\delta}(\T^d)$ for each $\delta\in(0,2s)$. Indeed,
\begin{multline*}
\|a(\xi)\|_{C^{\alpha+\delta}(\T^d)} \leq \int_0^{\min\{\xi,t\}}\frac{C}{\tau^{\delta/2s}}d\tau\sup_{\tau\in[0,T]}\|f(\tau)\|_{C^{\alpha}(\T^d)} \\
\leq \frac{C}{1-\delta/2s}\xi^{1-\delta/2s}\sup_{\tau\in[0,T]}\|f(\tau)\|_{C^{\alpha}(\T^d)}\ . 
\end{multline*}
In addition
\begin{multline*}
\|b(\xi)\|_{C^{2s+\alpha+\delta}(\T^d)} \leq \int_{\min\{\xi,t\}}^t\frac{C}{\tau^{1+\delta/2s}}d\tau\sup_{\tau\in[0,T]}\|f(\tau)\|_{C^{\alpha}(\T^d)} \\
\leq \frac{C}{\delta/2s}\xi^{-\delta/2s}\sup_{\tau\in[0,T]}\|f(\tau)\|_{C^{\alpha}(\T^d)}\ . 
\end{multline*}
Similarly to the above computations we have
\[
\|c(\xi)\|_{C^{2s+\alpha+\delta}(\T^d)}\leq \| \mathcal{T}_{\min\{\xi,t\}}u_0 \|_{C^{2s+\alpha+\delta}(\T^d)} \leq C\xi^{-\delta/2s}\|u_0\|_{C^{2s+\alpha}(\T^d)}.
\]
Therefore, by the definition of $K$ in Section \ref{susb} we have
\begin{multline*}
\xi^{-(1-\delta/2s)}K(\xi,u(t),C^{\alpha+\delta}(\T^d),C^{2s+\alpha+\delta}(\T^d))  \\\le \xi^{-(1-\delta/2s)}(\|a(\xi)\|_{C^{\alpha+\delta}(\T^d)} + \xi \|b(\xi) + c(\xi)\|_{C^{2s+\alpha+\delta}(\T^d)})  \\
\le C (\sup_{\tau\in[0,T]}\|f(\tau)\|_{C^{\alpha}(\T^d)} + \|u_0\|_{C^{2s+\alpha}(\T^d)}) \ .
\end{multline*}
This shows in particular that $u(t)\in C^{2s+\alpha}(\T^d)=(C^{\alpha+\delta}(\T^d),C^{2s+\alpha+\delta}(\T^d))_{1-\delta/2s,\infty}$ and
\[
\|u(t)\|_{C^{2s+\alpha}(\T^d)}\leq C(\|f\|_{C_x^{\alpha}(Q_T)} + \|u_0\|_{C^{2s+\alpha}(\T^d)})\ .
\]
for all $t \in [0, T]$. Since $\partial_tu=-(-\Delta)^su+f$ and $\|(-\Delta)^s u(t)\|_{C^\alpha(\T^d)}$ is controlled by $\|u(t)\|_{C^{2s+\alpha}(\T^d)}$ (see, e.g. \cite[Theorem 1.4]{rs}), we obtain the bound on $\|\partial_tu\|_{{C}^{\alpha}(\T^d)}$ + $\|(-\Delta)^s u(t)\|_{C^\alpha(\T^d)}$.
\par\smallskip
\textit{Step 2}. We need to show that $\partial_tu$ and $(-\Delta)^s u$ are both $\alpha/2s$-H\"older continuous in time. Note that as before it is sufficient to estimate the term $(-\Delta)^su$. One can proceed adapting the arguments in \cite[Theorem 4.0.14]{LunardiNote} to the fractional framework, and essentially use estimates of Lemma \ref{EstHolFracHeat}. We refer the reader to \cite{TesiAle} for detailed computations. Anyhow, our setting falls into a general treatment for abstract parabolic equations, see \cite{Sinestrari} or \cite[Theorem 4.0.15]{LunardiNote}.
\end{proof}

Concerning parabolic regularity in Sobolev spaces, we need the following
\begin{thm}\label{fracregtor} Let $p > 1$, $\eps > 0$ and $\mu \in \R$. Suppose that $u \in \mathcal{H}_p^{\mu}(Q_T)$ solves \eqref{fracreghol} with $u_0 \in H^{\mu - 2s/p+\eps}_p(\T^d)$. Then, there exists $C > 0$, that depends on $d, T, p, s, \eps$ (but remains bounded for bounded values of $T$) such that
\[
\|u\|_{\mathcal{H}_p^{\mu}(Q_T)} \le C(\|f\|_{\mathbb{H}_p^{\mu-2s}(Q_T)} + \|u_0\|_{\mu - 2s/p + \eps,p}).
\]
\end{thm}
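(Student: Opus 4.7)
The plan is to reduce the assertion to the maximal $L^p$ regularity property of the fractional heat semigroup on $L^p(\T^d)$, and then invoke a classical result from the theory of sectorial operators.

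\emph{Reduction via isometry.} Since $(I-\Delta)^{\eta/2}$ commutes with $\partial_t$ and with $(-\Delta)^s$, and isometrically maps $H^{\nu}_p(\T^d)$ onto $H^{\nu-\eta}_p(\T^d)$ for every $\nu \in \R$ (Remark \ref{isometry}), applying it with $\eta = \mu-2s$ reduces the problem to the case $\mu = 2s$, so that $f \in L^p(Q_T)$ and $u_0 \in H^{2s-2s/p+\eps}_p(\T^d)$ and the target inequality becomes
\[
\|u\|_{\mathbb{H}^{2s}_p(Q_T)} + \|\partial_t u\|_{L^p(Q_T)} \le C\left(\|f\|_{L^p(Q_T)} + \|u_0\|_{2s-2s/p+\eps,\, p}\right).
\]

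\emph{Duhamel splitting.} Write $u = u_1 + u_2$ with $u_1(t) := \mathcal{T}_t u_0$ and $u_2(t) := \int_0^t \mathcal{T}_{t-\tau} f(\tau)\,d\tau$. For the homogeneous piece, Lemma \ref{decaytorus}(i) applied with $\nu = 2s - 2s/p + \eps$ and $\gamma = 2s/p - \eps$ (both nonnegative for $\eps$ small) gives
\[
\|\mathcal{T}_t u_0\|_{2s,p}^p \le C\, t^{-1 + p\eps/(2s)}\, \|u_0\|_{2s-2s/p+\eps,\, p}^p,
\]
which is integrable on $(0,T)$ since $p\eps/(2s) > 0$, yielding the desired bound for $u_1$ in $\mathbb{H}_p^{2s}(Q_T)$. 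Because $\partial_t u_1 = -(-\Delta)^s u_1$ and $\|(-\Delta)^s v\|_p \le C\|v\|_{2s,p}$ (Remark \ref{isometry}), the same decay bound controls $\|\partial_t u_1\|_{L^p(Q_T)}$.

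\emph{Main obstacle: the inhomogeneous piece.} The nontrivial step is the maximal regularity estimate
\[
\|(-\Delta)^s u_2\|_{L^p(Q_T)} + \|\partial_t u_2\|_{L^p(Q_T)} \le C\, \|f\|_{L^p(Q_T)},
\]
for which, since $\partial_t u_2 = f - (-\Delta)^s u_2$ and $u_2(0)=0$, it suffices to bound the first summand. This is precisely the statement of maximal $L^p$ regularity for the sectorial generator $A := -(-\Delta)^s$ on $L^p(\T^d)$. Remark \ref{anse} records that $A$ generates a bounded analytic semigroup on $L^p(\T^d)$ for every $p \in (1,\infty)$. To upgrade analyticity to maximal regularity I would invoke the Dore--Venni theorem: $L^p(\T^d)$ is a UMD space for $1 < p < \infty$, and $-A$ admits bounded imaginary powers, because the multiplier symbol $(2\pi)^{2is\tau}|k|^{2is\tau}$ is controlled on $\R^d$ by the Mikhlin--H\"ormander theorem and then transferred to $\T^d$ via the argument already employed in Remark \ref{isometry}. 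Equivalently, one can transfer the $\R^d$ maximal regularity result from \cite{CL} directly to the periodic setting, since the Fourier symbol of the solution operator $f \mapsto (-\Delta)^s u_2$ is the same in the two geometries; the bilinear transference scheme used in Lemma \ref{katoponce} passes the $L^p(\R^d \times \R_+)$ bound to $L^p(\T^d \times (0,T))$. Either route yields the estimate for $u_2$, and combining with the bound on $u_1$ concludes the proof; the dependence of $C$ on $T$ (with boundedness for bounded $T$) is tracked throughout as the constants in Lemma \ref{decaytorus} are $T$-independent, and the maximal regularity constant on $(0,T)$ is non-decreasing in $T$.
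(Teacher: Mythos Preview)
Your proof follows the same architecture as the paper's: reduce to $\mu = 2s$ via the isometry $(I-\Delta)^{(\mu-2s)/2}$, split $u = u_1 + u_2$ by Duhamel, and handle $u_1$ with the decay estimate of Lemma \ref{decaytorus}(i) exactly as you do. The only divergence is the source of maximal $L^p$ regularity for $u_2$. The paper simply invokes Lamberton's abstract result \cite[Theorem~1]{L}: since $\mathcal{T}_t$ is analytic on $L^p(\T^d)$ (Remark \ref{anse}) and a contraction on every $L^q(\T^d)$ by \eqref{claim2}, the hypotheses are already verified and maximal regularity drops out with no further work. Your Dore--Venni route is also correct but less economical, since it requires the additional verification of bounded imaginary powers for $(-\Delta)^s$ on $L^p(\T^d)$; this is doable via Mikhlin and the transference of Remark \ref{isometry} as you indicate, but it is extra machinery compared to the paper's black box. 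Your second suggestion---transferring the $\R^d$ maximal regularity estimate from \cite{CL} using ``the bilinear transference scheme used in Lemma \ref{katoponce}''---is imprecise as written: that lemma transfers \emph{spatial} bilinear Fourier multipliers, whereas $f \mapsto (-\Delta)^s u_2$ is a linear space--time singular integral, so a different (operator-valued, or extension/restriction) transference argument would be needed there.
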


\begin{proof} A detailed proof of this result on $\R^d$ can be found in \cite{CL}. In the periodic setting one may proceed as follows. Note first that by Duhamel's formula $u(t) = u_1(t) + u_2(t)$, where
\begin{equation*}
u_1(t)=\mathcal{T}_t u_0, \qquad u_2(t) = \int_{0}^t\mathcal{T}_{t-\tau}f(\tau)d\tau.
\end{equation*}

Recall that $(-\Delta)^s$ generates the analytic semigroup $\mathcal{T}_t$ on $L^p(\T^d)$ in view of Remark \ref{anse}. This observation allows to apply the abstract regularity result \cite[Theorem 1]{L}, which yields the following estimate for $u_2$ 
\[
\|u_2\|_{\mathcal{H}_p^{2s}(Q_T)} \le C\|f\|_{L^p(Q_T)}\ . 
\]
The general case follows by the isometry property of the operator $(I-\Delta)^{\frac{\mu}{2}}$ in view of Remark \ref{isometry}.

The estimate of the term involving the initial datum $\|u_1\|_{\mathcal{H}_p^{\mu}(Q_T)} \le C\|u_0\|_{\mu - 2s/p + \eps,p}$ can be obtained directly using decay estimates. We assume without loss of generality $\eps<\frac{2s}{p}$. By Lemma \ref{decaytorus}-(i) we have
\begin{equation*}
\norm{u_1(t)}_{\mu,p} = \norm{\mathcal{T}_t u_0}_{\mu,p} \leq Ct^{-\frac{1}{p}+\frac{\epsilon}{2s}}\norm{u_0}_{\mu-2s/p+\epsilon,p}\ .
\end{equation*}
Note that $C$ here does not depend on $T$. Integrating between 0 and $T$ we have
\begin{equation*}
\norm{u_1}_{\mathbb{H}_p^{\mu}(Q_T)}^p=\int_0^T\norm{u_1(\cdot,t)}^p_{\mu,p}dt\leq C T^{\frac{p \epsilon}{2s}} \norm{u_0}^p_{\mu-2s/p+\epsilon,p}
\end{equation*}
Since $u_1$ solves $\partial_t u_1+(-\Delta)^su_1=0$ we get
\begin{multline*}
\norm{\partial_t u_1}_{\mathbb{H}_p^{\mu-2s}(Q_T)}^p=\int_0^T\norm{\partial_t u_1(\cdot,t)}^p_{\mu-2s,p}=\int_0^T\norm{(-\Delta)^su_1(\cdot,t)}^p_{\mu-2s,p}dt \\
\leq C \int_0^T\norm{(I-\Delta)^su_1(\cdot,t)}^p_{\mu-2s,p}dt= C \int_0^T\norm{u_1(\cdot,t)}^p_{\mu,p}dt\ ,
\end{multline*}
that allows to conclude.
\end{proof}

\begin{rem} In Theorem \ref{fracregtor} we ``pay a price'' of $\eps$ in terms of the regularity of the initial datum in order to make the argument more transparent. Set now $\mu = 2s$ for simplicity. Actually, it is natural for the initial datum $u_0$ in \eqref{fracreghol} to belong to the space $T(p,0,H^{2s}_p(\T^d), L^p(\T^d))$, that is the space of traces of functions in $\mathcal{H}_p^{2s}(\T^d\times(0,+\infty)) = W(p,0,H^{2s}_p(\T^d), L^p(\T^d))$. Similarly, $T(p,0,H^{2s}_p(\T^d), L^p(\T^d))$, is the space of traces of functions in $\mathcal{H}_p^{2s}(Q_T)$ by an easy localization argument. As mentioned in Section \ref{sintro}, $T(p,0,H^{2s}_p(\T^d), L^p(\T^d))$ is equivalent to the real interpolation space $(H^{2s}_p(\T^d), L^p(\T^d))_{1/p, p}$. The latter is isomorphic to the Besov space $B_{p,p}^{2s-2s/p}(\T^d)$ and therefore to $W_p^{2s-2s/p}(\T^d)$.
This chain of equivalences can be motivated as follows: arguing as in \cite[Theorem 6.4.5-(4)]{BL} and \cite[Exercise 6.8.7]{BL}, $(H^{2s}_p(\T^d), L^p(\T^d))_{1/p, p}$ is equivalent to the space of functions $u\in L^p(\T^d)$ with finite seminorm
\[
[u]_{B_{p,p}^{2s-2s/p}(\T^d)}=\left(\iint_{\T^d \times \T^d} \frac{|u(x)-u(x+h)|^p}{|h|^{d+2sp-2s }} dx dh \right)^{\frac1p}<\infty.
\]
This is indeed the Gagliardo seminorm, an equivalent way to define the space $W_p^{2s-2s/p}(\T^d)$ (see \cite[Example 1.0.6]{LunardiNote}). Such space is larger in general than $H^{2s - 2s/p + \eps}_p(\T^d)$ by Lemma \ref{inclusioni}. Therefore, by these ideas one could slightly relax the dependance on $u(0)$ in, e.g., Theorem \ref{Embedding}, Propositions \ref{embe2}, \ref{embe3}, ...

\end{rem}

\small


\medskip
\begin{flushright}
\noindent \verb"cirant@math.unipd.it"\\
Dipartimento di Matematica ``Tullio Levi-Civita''\\ Universit\`a di Padova\\
via Trieste 63, 35121 Padova (Italy)

\noindent \verb"alessandro.goffi@gssi.it"\\
Gran Sasso Science Institute\\
viale Francesco Crispi 7, 67100 L'Aquila (Italy)
\end{flushright}

\end{document}